\newtheorem{thm}{Theorem}[section]
\newtheorem{pro}[thm]{Proposition}
\newtheorem{lem}[thm]{Lemma}
\newtheorem{cor}[thm]{Corollary}
\theoremstyle{definition}
\newtheorem{dfn}[thm]{Definition}
\newtheorem{exa}[thm]{Example}
\newtheorem{que}[thm]{Question}
\def\Aut{{\rm Aut}}
\def\C{{\mathcal{CDS}}}
\def\dbar{{\overline d}}
\def\Diff{{\rm Diff}}
\def\Fbar{{\overline F}}
\def\Ham{{\rm Ham}}
\def\Hameo{{\rm Hameo}}
\def\Hbar{{\overline H}}
\def\holine{{\overline h}}
\def\Homeo{{\rm Homeo}}
\def\id{{\rm id}}
\def\Int{{\rm Int}}
\def\Khat{{\widehat K}}
\def\mL{{\mathcal L}}
\def\mubar{{\overline{\mu}}}
\def\oneinfty{{(1,\infty)}}
\def\osc{{\rm osc}}
\def\PDiff{{\rm PDiff}}
\def\PHameo{{\rm PHameo}}
\def\Phihat{{\widehat \Phi}}
\def\phihat{{\widehat \phi}}
\def\PHomeo{{\rm PHomeo}}
\def\psihat{{\widehat \psi}}
\def\R{{\mathbb R}}
\def\Symp{{\rm Symp}}
\def\Sympeo{{\rm Sympeo}}
\def\TC{{\mathcal{TCDS}}}
\def\wH{{\widehat H}}
\def\Z{{\mathbb Z}}
\numberwithin{equation}{section}
\title[Topological contact dynamics I]{Topological contact dynamics I: symplectization and applications of the energy-capacity inequality}
\author[S.~M\"uller \& P.~Spaeth]{Stefan M\"uller and Peter Spaeth}
\address{University of Illinois at Urbana-Champaign, Urbana, IL 61801 \newline \indent Korea Institute for Advanced Study, Seoul 130-722, Republic of Korea}
\email{stefanm@illinois.edu}
\address{Penn State University, Altoona, PA 16601 \newline \indent Korea Institute for Advanced Study, Seoul 130-722, Republic of Korea}
\email{spaeth@psu.edu}
\subjclass[2010]{53D10, 37J55, 57R17, 54H20, 57S05, 28D05}
\keywords{Contact energy-capacity inequality, bi-invariant metric on strictly contact diffeomorphism group, contact $C^0$-rigidity, symplectization, topological contact dynamics, uniqueness of topological contact isotopy, topological contact Hamiltonian, uniqueness of topological conformal factor, contact homeomorphism, topological automorphism of a contact structure, topological Reeb flow, topological group, weak convergence of measures, properly essential automorphism group}
\begin{document}

\begin{abstract}
We introduce topological contact dynamics of a smooth manifold carrying a cooriented contact structure, generalizing previous work in the case of a symplectic structure \cite{mueller:ghh07} or a contact form \cite{banyaga:ugh11}.
A topological contact isotopy is not generated by a vector field; nevertheless, the group identities, the transformation law, and classical uniqueness results in the smooth case extend to topological contact isotopies and homeomorphisms, giving rise to an extension of smooth contact dynamics to topological dynamics.
Our approach is via symplectization of a contact manifold, and our main tools are an energy-capacity inequality we prove for contact diffeomorphisms, combined with techniques from measure theory on oriented manifolds.
We establish non-degeneracy of a Hofer-like bi-invariant pseudo-metric on the group of strictly contact diffeomorphisms constructed in \cite{banyaga:lci06}.
The topological automorphism group of the contact structure exhibits rigidity properties analogous to those of symplectic diffeomorphisms, including $C^0$-rigidity of contact and strictly contact diffeomorphisms.
\end{abstract}

\maketitle

\begin{center} {\em \small Dedicated to the memory of our friend Lee Jeong-eun.} \end{center}

\section{Introduction} \label{sec:intro}
Suppose a Hamiltonian diffeomorphism $\phi$ of a symplectic manifold $(W,\omega)$ is generated by a compactly supported Hamiltonian, and displaces a compact subset $K \subset \Int \, W$ containing an open ball.
The energy-capacity inequality from \cite{lalonde:gse95} implies
\begin{align} \label{eqn:energy-capacity-ineq}
	0 < \frac{1}{2} c (K) \leq E (\phi),
\end{align}
where the symplectic capacity $c (K)$ is the Gromov width of $K$, and $E (\phi)$ denotes the energy or Hofer norm of $\phi$.
The non-degeneracy of the Hofer metric \cite{hofer:tps90} follows immediately.
The displacement energy of $K$, or minimal energy required to displace $K$ from itself, is the infimum of $E (\phi)$ over all Hamiltonian diffeomorphisms $\phi$ as above, and by inequality~(\ref{eqn:energy-capacity-ineq}), it is bounded from below by one-half the capacity of $K$.
The existence of a symplectic capacity $c$ is sufficient to prove the Gromov--Eliashberg $C^0$-rigidity of symplectic diffeomorphisms, which means if a sequence of symplectic diffeomorphisms converges uniformly to another diffeomorphism of $W$, then the limit is again symplectic \cite{gromov:pdr86, gromov:shs87, eliashberg:tsw87}.
It is therefore consistent to define a symplectic homeomorphism (or topological automorphism) of the symplectic structure $\omega$ to be the limit of a $C^0$-convergent sequence of symplectic diffeomorphisms \cite{mueller:ghh07}.
This closure forms a subgroup of $\Homeo (W)$, denoted by $\Sympeo (W,\omega)$, and the Gromov--Eliashberg $C^0$-rigidity of symplectic diffeomorphisms can be stated succinctly $\Sympeo (W,\omega) \cap \Diff (W) = \Symp (W,\omega)$.

One goal of the present paper is to adapt these results to contact manifolds.
To that end, we prove an energy-capacity inequality for contact diffeomorphisms.

\begin{thm}[Contact energy-capacity inequality] \label{thm:energy-capacity-ineq}
Let $(M,\xi)$ be a contact manifold with a contact form $\alpha$.
Suppose the time-one map $\phi^1_H \in \Diff_0 (M,\xi)$ of a compactly supported smooth contact Hamiltonian $H \colon [0,1] \times M \to \R$ displaces a compact subset $K \subset \Int \, M$ containing an open ball.
Then there exists a constant $C > 0$, independent of the contact isotopy $\{ \phi_H^t \}$, its conformal factor $h \colon [0,1] \times M \to \R$ given by $(\phi_H^t)^* \alpha = e^{h (t,\cdot)} \alpha$, and the contact Hamiltonian $H$, such that
	\[ 0 < C e^{- | h |} \leq \| H \|_\alpha. \]
\end{thm}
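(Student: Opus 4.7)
The plan is to reduce the theorem to the classical symplectic energy-capacity inequality applied on the symplectization $(M \times \R, d(e^s \alpha))$. A contact isotopy $\{\phi^t_H\}$ with conformal factor $h$ lifts to the symplectic isotopy
\[ \Phi^t(x, s) = (\phi^t_H(x),\, s - h(t, x)), \]
and I will show that $\Phi^t$ is Hamiltonian on the symplectization with Hamiltonian $\widehat H(t, x, s) = e^s H(t, x)$. This is a Cartan-formula computation whose key input is the identity $\partial_t h(t, \phi^{-t}_H(y)) = dH_t(R_\alpha)|_y$, obtained by differentiating $(\phi^t_H)^* \alpha = e^h \alpha$ in $t$ and evaluating on the Reeb vector field. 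Modulo two technicalities -- that $\widehat H$ is not compactly supported and that the $|h|$ dependence must be tracked precisely -- the proof then reduces to bounding the Hofer norm of $\Phi^1$ from above.

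For the displaced set I fix the slab $K \times [0, 1] \subset M \times \R$. Its image under $\Phi^1$ projects to $\phi^1_H(K)$, which is disjoint from $K$, so $\Phi^1$ displaces the slab. Since $K$ contains an open ball, Darboux's theorem applied to $d(e^s \alpha)$ supplies a symplectic chart near an interior point identifying a small open subset with an open set of standard $\R^{2n + 2}$, so $K \times [0, 1]$ contains an embedded symplectic ball and its Gromov width $c$ is a positive constant depending only on $K$ and $\alpha$.

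To apply the symplectic energy-capacity inequality I multiply $\widehat H$ by a cutoff $\chi(s) \in [0, 1]$ equal to $1$ on $[-|h|, 1 + |h|]$ and supported in $[-|h| - 1, 2 + |h|]$. For each $(x, s) \in K \times [0, 1]$ the trajectory $\Phi^t(x, s)$ has $s$-coordinate $s - h(t, x) \in [-|h|, 1 + |h|]$ for every $t \in [0, 1]$, so $\chi \widehat H$ generates the same flow on $K \times [0, 1]$ and therefore still displaces it. The pointwise bound $|\chi(s) e^s H(t, x)| \leq e^{2 + |h|} \max_x |H(t, x)|$, integrated in $t$, gives
\[ \tfrac{1}{2} c \leq \|\Phi^1\|_{\mathrm{Hofer}} \leq \int_0^1 \osc(\chi(s) e^s H(t, \cdot))\, dt \leq 2 e^{2 + |h|} \|H\|, \]
which rearranges to the desired inequality $\|H\| \geq (c / 4 e^2)\, e^{-|h|}$, with $C := c / 4 e^2 > 0$ independent of $\{\phi^t_H\}$, $h$, and $H$.

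The principal difficulty is the Hamiltonian identification $\widehat H = e^s H$: the $-\partial_t h$ contribution to the $s$-component of the lifted vector field must cancel exactly against the term produced by $\iota_{X_H} d\alpha = dH(R_\alpha)\alpha - dH$ under contraction with $d(e^s \alpha)$, and this is precisely what the Reeb identity above delivers. Beyond that, the slab selection, the cutoff, and the oscillation bound are routine bookkeeping, and the only contact-specific convention used is that $\|H\|$ denotes $\int_0^1 \max_x |H(t, \cdot)|\, dt$, since no constant may be subtracted from a contact Hamiltonian.
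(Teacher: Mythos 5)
Your proposal is correct and follows essentially the same route as the paper: lift to the symplectization, observe that the lifted isotopy displaces a compact slab $K \times [a,b]$, cut off the admissible Hamiltonian in the $\R$-direction so it becomes compactly supported, apply the Lalonde--McDuff energy-capacity inequality, and bound the Hofer norm of the cut-off Hamiltonian by $e^{\mathrm{const}+|h|}\|H\|$. The one small slip is your closing claim that $\|H\|$ in the theorem is $\int_0^1 \max_x |H(t,\cdot)|\,dt$; the paper's norm is $\int_0^1\bigl(\osc(H_t)+|c(H_t)|\bigr)\,dt$, which by Lemma~\ref{lem:max-norm} dominates the pointwise max norm, so your derived lower bound on $\int_0^1\max_x|H_t|\,dt$ still yields the stated inequality for the paper's $\|H\|$ with the same constant.
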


The constant $C$ is determined by the displacement energy of the Cartesian product of the set $K$ with an interval in the symplectization $M \times \R$ of $(M,\alpha)$, and depends on the contact form $\alpha$.
See Sections~\ref{sec:contact-geometry}, \ref{sec:symplectization}, and \ref{sec:energy-capacity-ineq} for details.
As a consequence, we prove non-degeneracy of the bi-invariant pseudo-metric on the group of strictly contact diffeomorphisms defined by A.~Banyaga and P.~Donato in \cite{banyaga:lci06}.

\begin{thm} \label{thm:non-degeneracy}
Let $(M,\xi)$ be a contact manifold with a contact form $\alpha$.
The function
	\[ \Diff_0 (M,\alpha) \times \Diff_0 (M,\alpha) \to \R, \quad (\phi,\psi) \mapsto E (\phi^{-1} \circ \psi) \]
defines a bi-invariant metric on $\Diff_0 (M,\alpha)$.
\end{thm}

See Section~\ref{sec:bi-invariant-metric} for the definition of the contact energy $E$ and for the proof.
Moreover, we establish the following analog of symplectic $C^0$-rigidity for contact diffeomorphisms.

\begin{thm}[Contact $C^0$-rigidity] \label{thm:contact-rigidity}
Suppose $\phi_i$ is a sequence of contact diffeomorphisms of a contact manifold $(M,\xi)$, with $\phi_i^*\alpha = e^{h_i}\alpha$, where $\alpha$ is a contact form with $\ker \alpha = \xi$.
Further assume that the sequence $\phi_i$ converges uniformly on compact subsets to a homeomorphism $\phi$, and the sequence of functions $h_i$ converges to a continuous function $h$ uniformly on compact subsets.
If $\phi$ is smooth, then $h$ is smooth, and $\phi$ is a contact diffeomorphism with $\phi^*\alpha = e^h \alpha$.
\end{thm}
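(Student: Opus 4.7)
The plan is to pass to the symplectization of $(M,\alpha)$ and prove the pointwise identity $\phi^{*}\alpha=e^h\alpha$, from which the other conclusions follow formally. Form $(SM=M\times\R,\,\omega=d\lambda)$ with $\lambda=e^s\alpha$. Each contact diffeomorphism $\phi_i$ with conformal factor $h_i$ admits the standard symplectic lift
\[
  \Phi_i(x,s)=(\phi_i(x),\,s-h_i(x)),
\]
and a direct computation using $\phi_i^{*}\alpha=e^{h_i}\alpha$ yields the strict identity $\Phi_i^{*}\lambda=\lambda$; in particular each $\Phi_i\in\Symp(SM,\omega)$. The hypotheses translate into uniform-on-compacts convergence $\Phi_i\to\Phi$, where $\Phi(x,s)=(\phi(x),\,s-h(x))$ is a homeomorphism of $SM$, and the conclusion of the theorem is equivalent to the strict relation $\Phi^{*}\lambda=\lambda$.

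Once the pointwise identity $\phi^{*}\alpha=e^h\alpha$ is in hand the remainder is formal: since $\phi$ is smooth, the left side is a smooth 1-form, so $e^h=(\phi^{*}\alpha)(R_\alpha)$ (with $R_\alpha$ the Reeb vector field of $\alpha$) is smooth, forcing $h$ smooth; and $\phi^{*}\alpha$ is then a contact form, so $\phi$ is a local diffeomorphism and, being a homeomorphism, a contact diffeomorphism with conformal factor $h$. To prove $\phi^{*}\alpha=e^h\alpha$ I would argue by contradiction using Theorem~\ref{thm:energy-capacity-ineq}. Assume the defect $\phi^{*}\alpha-e^h\alpha$ is nonzero at some $p\in M$. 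Both sides being continuous 1-forms, with the left side smooth, the defect is bounded below in norm by some $\varepsilon>0$ on a small compact neighborhood $V$ of $p$. One then uses the approximating sequence $\phi_i$, together with this defect, to construct a compactly supported smooth contact Hamiltonian $F$ on $M$ whose time-one flow displaces a small compact set $K\subset V$ containing an open ball, with $\|F\|$ strictly smaller than the lower bound that Theorem~\ref{thm:energy-capacity-ineq} imposes for displacement of $K$; the resulting contradiction forces $\phi^{*}\alpha=e^h\alpha$ everywhere.

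The main obstacle is the construction of this sub-capacity Hamiltonian $F$ from the pointwise defect, and the reason the proof genuinely requires Theorem~\ref{thm:energy-capacity-ineq} rather than a direct application of symplectic Gromov--Eliashberg to $\Phi_i\to\Phi$ is that smoothness of $\phi$ does not a priori give smoothness of $\Phi$ (since $h$ is only continuous), so $\Phi$ need not be a diffeomorphism of $SM$. I expect the symplectization picture to be essential for the construction: the formal discrepancy $\Phi^{*}\lambda-\lambda$ equals $e^s\eta$ on $SM$ for $\eta:=e^{-h}\phi^{*}\alpha-\alpha$ (pulled back from $M$), and $\eta\neq 0$ at $p$; lifting a candidate contact Hamiltonian to an $\R$-invariant Hamiltonian on the symplectization lets one derive the necessary displacement estimate using the $\Phi_i$, and then translate back to the contact side via the standard correspondence between contact Hamiltonians on $M$ and $\R$-invariant Hamiltonians on $SM$. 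Calibrating this construction so that $\|F\|$ beats the energy-capacity bound uniformly in the conformal factor is the technical heart of the argument.
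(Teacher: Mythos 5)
You correctly identify the crux --- that $h$ is a priori only continuous, so the lift $\Phi(x,\theta)=(\phi(x),\theta-h(x))$ need not be a diffeomorphism and Gromov--Eliashberg cannot be applied directly --- and your final step (once $h$ is known to be smooth, lift to the symplectization and invoke symplectic $C^0$-rigidity) is exactly how the paper concludes. But the central step of your argument is missing, and I do not see how to fill it along the lines you sketch. Theorem~\ref{thm:energy-capacity-ineq} gives a \emph{lower} bound on $\|F\|$ for a contact Hamiltonian $F$ whose time-one map displaces $K$; to derive a contradiction you must exhibit a displacing contact isotopy with energy \emph{below} that bound. The hypotheses of Theorem~\ref{thm:contact-rigidity} supply no isotopies and no Hamiltonians at all --- only a sequence of diffeomorphisms $\phi_i$ and their conformal factors --- and a nonzero ``defect'' $\phi^*\alpha-e^h\alpha$ is a $1$-form, not a displacement phenomenon: it gives no compact set that anything displaces and no candidate generating Hamiltonian with a controlled norm. (The displacement arguments elsewhere in the paper, e.g.\ Proposition~\ref{pro:unique-conformal-factor}, work precisely because there the Hamiltonians $\Hbar\#H_i$ are given and assumed to converge to $0$.) You explicitly defer this construction as ``the technical heart,'' so as written the proof has a genuine gap at its main step.

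The paper's route is entirely different and avoids the energy-capacity inequality here: it is measure-theoretic. Since $\phi_i^*\nu_\alpha=e^{nh_i}\nu_\alpha$, the push-forward measures $(\phi_i^{-1})_*\mubar_\alpha$ converge weakly to $(\phi^{-1})_*\mubar_\alpha$; upper semi-continuity of evaluation on closed balls, together with the uniform lower bound $e^{-n|h|}$ on the densities, forces $\det d\phi>0$ everywhere, so $\phi$ is a diffeomorphism with $\phi^*\mu_\alpha=e^{ng}\mu_\alpha$ for a smooth $g$. Then lower semi-continuity of evaluation on open sets, applied to $\phi_i^{-1}\circ\phi\to\id$ and to the inverses, yields $g=h$, so $h$ is smooth. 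Only then does the symplectization lift become a genuine diffeomorphism and Gromov--Eliashberg finishes the proof, as in your last step. If you want to salvage your outline, you need to replace the energy-capacity contradiction with an argument of this volume-distortion type (or some other mechanism that actually uses the convergence $h_i\to h$ to control $\phi$).
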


The uniform convergence of the conformal factors $h_i$ does not depend on the choice of contact form $\alpha$ with $\ker \alpha = \xi$.
We define the group $\Aut (M,\xi)$ of topological automorphisms of the contact structure $\xi$, analogous to the group $\Sympeo (W,\omega)$ above.
The contact $C^0$-rigidity theorem can then be stated in succinct terms $\Aut (M,\xi) \cap \Diff (M) = \Diff (M,\xi)$.
See Sections~\ref{sec:topo-contact-dynamics} and \ref{sec:automorphisms} for details.

This article is part of a series of papers on topological contact dynamics, and serves as an introduction to the theory.
We define topological contact isotopies and conformal factors, and show that both are determined uniquely by a topological contact Hamiltonian.
Composition and inversion of isotopies, as well as the transformation law, extend from smooth to topological contact dynamics.

In Section~\ref{sec:contact-geometry}, we review the necessary elements of contact geometry needed in subsequent sections, with focus on the dynamics of a contact vector field.
Similarly, Section~\ref{sec:ham-geometry} treats symplectic and Hamiltonian geometry, the even-dimensional analog to contact geometry, and the dynamics of a Hamiltonian vector field.
This section also contains a summary of compactly supported topological Hamiltonian dynamics, which we hope is more accessible than previous treatments of the subject.
Many outstanding monographs exist in the literature that develop smooth Hamiltonian and contact dynamics from a modern standpoint.
We have been influenced particularly by the books \cite{mcduff:ist98, hofer:sih94, polterovich:ggs01, banyaga:scd97, geiges:ict08, blair:rgc10}.
For the theory of topological Hamiltonian dynamics, we refer to the articles \cite{mueller:ghh07, mueller:ghl08, viterbo:ugh06, buhovsky:ugh11}.
The intimate relationship between contact and Hamiltonian dynamics via symplectization, explained in Section~\ref{sec:symplectization}, is the guiding principle in adapting topological Hamiltonian dynamics to topological contact dynamics.
The reader familiar with symplectic and contact geometry may skip these introductory sections at first reading, but should refer to them for our sign conventions and the notation used throughout this article.

The proof of the contact energy-capacity inequality in Theorem~\ref{thm:energy-capacity-ineq} requires a deep result in symplectic geometry.
After proving it in Section~\ref{sec:energy-capacity-ineq}, most of the more involved technical machinery moves to the backstage, allowing for short and elegant proofs of otherwise difficult results.
We use the symplectization of a contact manifold together with measure theory on orientable manifolds in a novel way.
A combination of these ingredients has several interesting consequences that are discussed in this paper and in its sequels.

Topological contact dynamics is then introduced in Section~\ref{sec:topo-contact-dynamics}.
This section contains the main results of topological contact dynamics in this paper, with some of the more involved proofs postponed to later sections.
We also explain topological Hamiltonian dynamics of the (non-compact) symplectization of a contact manifold.
An extensive motivation for the study of topological Hamiltonian dynamics can be found in \cite{mueller:ghh07, mueller:ghh08}, which applies almost verbatim in the contact case.
In addition to the applications in this article, the close relationship between the two theories via symplectization serves as another driving force for pursuing the study of topological contact dynamics.
Reducing dimension on the other hand, topological strictly contact dynamics of a regular contact manifold $M$ is closely related to topological Hamiltonian dynamics of the quotient of $M$ by the Reeb flow \cite{banyaga:ugh11}.
The sequel \cite{ms:tcd2} contains a detailed discussion.

In Section~\ref{sec:uniqueness-theorems}, we prove the previously stated main uniqueness theorems.
As in the Hamiltonian case in \cite{mueller:ghh07}, the energy-capacity inequality plays the key role in the proofs.
Detailed examples illustrating that all of the convergence hypotheses in the definition of topological contact dynamics and in the uniqueness theorems are necessary are given in Section~\ref{sec:examples}.
The group properties of topological contact dynamical systems, topological contact Hamiltonians, and topological contact isotopies and their time-one maps, are proved in Section~\ref{sec:topo-group}.
The proof of the transformation law can be found there as well.

In Section~\ref{sec:bi-invariant-metric}, we prove the existence of a bi-invariant metric on the group of strictly contact diffeomorphisms, with no restrictions on the contact form.
This generalizes a theorem of Banyaga and Donato to any contact form $\alpha$.
However, for the group of contact isotopies, we show by example the failure of the triangle inequality, and thus the distance on the group of strictly contact isotopies does not extend in this case.
Our construction is local in nature, and thus applies to any contact manifold.
Section~\ref{sec:automorphisms} studies the groups of topological automorphisms of a contact structure $\xi$ and of a contact form $\alpha$, which are the analogs to the group $\Sympeo (W,\omega)$ of topological automorphisms of a symplectic structure $\omega$.
In this section, we also prove the $C^0$-rigidity of contact and strictly contact diffeomorphisms, and other consequences of the properties of topological automorphisms.
See also Section~\ref{sec:smooth-conformal-factors}.
A brief outlook into the sequels to this work is undertaken in the final Section~\ref{sec:sequels}.

Some of the sections can be read mostly independently of the rest of the paper.
We mention in particular Sections~\ref{sec:energy-capacity-ineq}, \ref{sec:uniqueness-theorems}, \ref{sec:examples}, \ref{sec:bi-invariant-metric}, \ref{sec:automorphisms}, and \ref{sec:smooth-conformal-factors}, which are of particular relevance in smooth contact dynamics.
This first part in our series of papers on topological contact dynamics lays the foundations for most later applications.
Its guiding principles are symplectization and consequences of the energy-capacity inequality.
Other results and applications are organized under different umbrellas and postponed to one of the two sequels \cite{ms:tcd2, ms:tcd3}.

\section{Review of contact geometry and contact dynamics} \label{sec:contact-geometry}
Let $(M,\xi)$ be a smooth manifold of dimension $2 n - 1$ equipped with a cooriented nowhere integrable field of hyperplanes $\xi \subset TM$.
The \emph{contact structure} $\xi$ can be written as $\xi = \ker \alpha$, where the \emph{contact form} $\alpha$ is a smooth one-form on $M$ such that $\nu_\alpha = {\alpha \wedge (d\alpha)^{n - 1}} \neq 0$.
Unless mentioned otherwise, the manifold $M$ is always assumed to be closed, i.e.\ compact and without boundary.
See the remarks at the end of Section~\ref{sec:topo-contact-dynamics} for the case of open manifolds, that is, those manifolds that are not closed.
For simplicity, we assume throughout that $M$ is connected.
We fix a coorientation of $\xi$, and hence an orientation of $M$.
Then any other contact form $\alpha'$ on $(M,\xi)$ can be written $\alpha' = e^f \alpha$ for a smooth function $f$ on $M$.
A diffeomorphism $\phi$ is called \emph{contact} if it preserves the contact structure, and this is equivalent to the existence of a smooth function $h \colon M \to \R$ such that
\begin{align} \label{eqn:conformal-factor}
	\phi^* \alpha = e^h \alpha.
\end{align}
We denote the group of contact diffeomorphisms by $\Diff (M,\xi)$, and the subgroup of contact diffeomorphisms isotopic to the identity inside $\Diff (M,\xi)$ by $\Diff_0 (M,\xi)$.
In their book \cite{mcduff:ist98}, D.~McDuff and D.~Salamon ask if a $C^0$-characterization of contact diffeomorphisms exists.
Non-squeezing results and the existence of capacities depend in a more subtle way on the topology of the underlying contact manifold.
We refer the reader to \cite{eliashberg:gct06}.

The \emph{Reeb vector field} $R$ defined by $\alpha$ is the unique vector field on $M$ in the kernel of $d\alpha$ satisfying $\iota (R) \alpha = 1$, where $\iota$ denotes interior multiplication or contraction of a differential form by a smooth vector field.
An isotopy $\Phi = \{ \phi_t \}_{0 \le t \le 1}$ is a \emph{contact isotopy} if there exists a smooth family of functions $h_t \colon M \to \R$ satisfying
\begin{align} \label{eqn:contact-isotopy}
	\phi_t^*\alpha = e^{h_t} \alpha.
\end{align}
$\Phi$ is contact if and only if the smooth vector fields $X_t = (\frac{d}{dt} \phi_t) \circ \phi_t^{-1}$ form a family of \emph{contact vector fields}, meaning the Lie derivative of $\alpha$ along $X_t$ satisfies $\mL_{X_t} \alpha = \mu_{X_t} \alpha$, for a smooth family of functions $\mu_{X_t} \colon M \to \R$.
In contrast to a symplectic isotopy, a contact isotopy is always `Hamiltonian', and the contact Hamiltonian function $H \colon [0,1] \times M \to \R$ is determined at each time $t$ by the equation $\iota(X_t) \alpha = H_t = H (t,\cdot)$.
Conversely, given a smooth family of functions $H_t \colon M \to \R$, the equations
\begin{align} \label{eqn:contact-ham}
	\iota(X_t) \alpha = H_t \quad \text{and} \quad \iota(X_t) d\alpha = (R . H_t) \alpha - dH_t
\end{align}
define a smooth family of contact vector fields $X_t$, whose flow satisfies equation~(\ref{eqn:contact-isotopy}), and $\mu_{X_t} = R . H_t$.
Here we write $R . H = dH (R)$ for the derivative of the smooth function $H$ in the direction of the Reeb vector field $R$.
The function $h$ satisfying equation~(\ref{eqn:conformal-factor}) is called the \emph{conformal factor} of the contact diffeomorphism $\phi$, and the time-dependent function $h \colon [0,1] \times M \to \R$ defined by equation~(\ref{eqn:contact-isotopy}) and $h (t,\cdot) = h_t$ is called the conformal factor of the isotopy $\Phi$.
It is related to $H$ through the identity
\begin{align} \label{eqn:formula-conformal-factor}
	h_t = \int_0^t (R . H_s) \circ \phi^s_H \, ds.
\end{align}
A contact isotopy $\Phi$ will often be denoted by $\Phi_H  = \{ \phi_H^t \}$ provided equation~(\ref{eqn:contact-ham}) holds, and similarly for the contact vector field $X_H$.
The group of smooth contact isotopies is labeled $\PDiff(M,\xi)$.
The notation $H \mapsto \Phi_H$ and $H \mapsto \phi^1_H$ is short-hand for writing $H$ generates the isotopy $\Phi_H$ and the time-one map $\phi^1_H$, respectively.

A contact diffeomorphism $\phi$ is called \emph{strictly contact} if it preserves not only the contact structure $\xi$ but also the contact form $\alpha$, that is, $\phi^* \alpha = \alpha$.
If $\phi$ is a contact diffeomorphism satisfying equation~(\ref{eqn:conformal-factor}), then $\phi^* \nu_\alpha = e^{n h} \nu_\alpha$.
Thus a contact diffeomorphism is strictly contact if and only if it preserves the volume form $\nu_\alpha$.
A contact isotopy $\{ \phi_t \}$ is strictly contact if $\phi_t$ is strictly contact for each time $t$, or equivalently, its conformal factor $h \colon [0,1] \times M \to \R$ is identically zero.
A smooth function $H \colon [0,1] \times M \to \R$ is called \emph{basic} if $R . H_t = 0$, or $H_t$ is invariant under the Reeb flow, for all $t$.
Then a contact isotopy $\Phi_H$ is strictly contact if and only if its generating contact Hamiltonian $H$ is basic.
The groups of strictly contact diffeomorphisms and strictly contact isotopies are written $\Diff (M,\alpha) \subset \Diff (M,\xi)$ and $\PDiff (M,\alpha) \subset \PDiff (M,\xi)$, respectively.

We recall some other facts about diffeomorphisms, vector fields, and contact isotopies \cite{libermann:sga87}.
The proofs of the next two lemmas are widely known.

\begin{lem} \label{lem:conformal-calculus}
Let $\phi$ and $\psi$ be diffeomorphisms of a smooth manifold $M$, $\beta$ a differential form, and $X$ a vector field on $M$.
Then $\iota (\phi_* X ) \beta = (\phi^{-1})^* (\iota (X) \phi^* \beta)$.
If there exist smooth functions $h$ and $g$ on $M$, such that $\phi^* \beta = e^h \beta$ and $\psi^* \beta = e^g \beta$, then $(\phi \circ \psi)^* \beta = e^{h \circ \psi + g} \beta$, and $(\phi^{-1})^* \beta = e^{-h \circ \phi^{-1}} \beta$.
\end{lem}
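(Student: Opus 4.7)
The plan is to treat the three identities separately, all as direct consequences of the naturality of the pullback under diffeomorphisms, the fact that $(\phi\circ\psi)^* = \psi^*\circ\phi^*$ on differential forms, and the Leibniz-style behavior $\psi^*(f\beta) = (f\circ\psi)\,\psi^*\beta$ for pulling back a function times a form. None of these steps will present a genuine obstacle; the only thing to be careful about is keeping track of where functions get composed when forms are pulled back, since this is precisely the source of the $h\circ\psi$ and $-h\circ\phi^{-1}$ terms.

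For the first identity, I would start from the standard naturality of interior multiplication: for any diffeomorphism $\phi$, vector field $Y$, and differential form $\omega$,
\[
	\phi^* (\iota (Y) \omega) = \iota (\phi^* Y)\, \phi^* \omega,
\]
where $\phi^* Y = (\phi^{-1})_* Y$ denotes the pullback vector field. Applying this with $Y = \phi_* X$ gives $\phi^* Y = X$, hence $\phi^* (\iota (\phi_* X) \beta) = \iota (X)\, \phi^* \beta$, and pulling back by $\phi^{-1}$ yields the stated identity $\iota (\phi_* X) \beta = (\phi^{-1})^* (\iota (X)\, \phi^* \beta)$.

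The composition formula is then a direct calculation using functoriality of the pullback:
\[
	(\phi \circ \psi)^* \beta = \psi^* (\phi^* \beta) = \psi^* (e^h \beta) = (e^h \circ \psi)\, \psi^* \beta = e^{h \circ \psi}\, e^g \beta = e^{h \circ \psi + g}\, \beta.
\]
For the inverse identity, I would apply the composition rule with $\psi = \phi^{-1}$ and $g$ being the (as yet unknown) conformal factor of $\phi^{-1}$. Since $\phi \circ \phi^{-1} = \id$ and $\id^* \beta = \beta$, we obtain $e^{h \circ \phi^{-1} + g} = 1$, so $g = - h \circ \phi^{-1}$, giving $(\phi^{-1})^* \beta = e^{-h \circ \phi^{-1}} \beta$. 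Alternatively, and perhaps more directly, one writes $\beta = \id^* \beta = (\phi^{-1})^* \phi^* \beta = (\phi^{-1})^* (e^h \beta) = (e^h \circ \phi^{-1})\, (\phi^{-1})^* \beta$ and solves for $(\phi^{-1})^* \beta$. As noted, there is no real obstacle here; the lemma is purely a bookkeeping statement about how pullbacks interact with contractions and with the multiplicative structure $\beta \mapsto e^f \beta$, and the proof amounts to the three short calculations above.
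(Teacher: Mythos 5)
Your proof is correct, and since the paper omits this proof entirely (noting only that it is ``widely known''), the standard argument you give via naturality of interior multiplication and functoriality of pullback is exactly the intended one.
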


\begin{lem}[Contact Hamiltonian group structure] \label{lem:ham-gp-str}
Suppose $H \mapsto \Phi_H$ and $F \mapsto \Phi_F$.
Then the following smooth functions generate the indicated contact isotopies.
\begin{align*}
	& H \# F \mapsto \Phi_H \circ \Phi_F,  & (H \# F)_t = H_t + \left( e^{h_t} \cdot F_t \right) \circ (\phi_H^t)^{-1}, \\
	& \Hbar \mapsto \Phi_H^{-1},  & \Hbar_t = - e^{- h_t} \cdot \left( H_t \circ \phi_H^t \right), \\
	& \Hbar \# F \mapsto \Phi_H^{-1} \circ \Phi_F,  & (\Hbar \# F)_t = e^{- h_t} \cdot \left( (F_t - H_t) \circ \phi_H^t \right), \\
	& K \mapsto \phi^{-1} \circ \Phi_H \circ \phi, & K_t = e^{- g} \left( H_t \circ \phi \right),
\end{align*}
for $\phi \in \Diff (M,\xi)$ with $\phi^*\alpha = e^g \alpha$.
Here composition $\circ$ and inversion is to be understood as composition and inversion of the diffeomorphisms at each time $t$.
\end{lem}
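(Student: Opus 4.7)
The plan is to verify each of the four assertions by computing the generating vector field of the isotopy in question and then applying the contraction identity from Lemma~\ref{lem:conformal-calculus} to recover its contact Hamiltonian. Since compositions, inverses, and conjugations of contact diffeomorphisms remain contact diffeomorphisms, each resulting isotopy is automatically a contact isotopy; thus it suffices in each case to produce a smooth function $K$ satisfying $\iota(Y_t)\alpha = K_t$, where $Y_t = (\frac{d}{dt}\Psi_t)\circ \Psi_t^{-1}$ denotes the generator of the isotopy $\Psi_t$.

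For the composition $\Psi_t = \phi_H^t\circ\phi_F^t$, the chain rule together with the cancellation $\phi_F^t\circ\Psi_t^{-1} = (\phi_H^t)^{-1}$ yields the generating vector field $Y_t = X_H(t) + (\phi_H^t)_* X_F(t)$. Contracting with $\alpha$ and invoking Lemma~\ref{lem:conformal-calculus} gives
\[ \iota(Y_t)\alpha = H_t + ((\phi_H^t)^{-1})^*\bigl(\iota(X_F(t))(\phi_H^t)^*\alpha\bigr) = H_t + ((\phi_H^t)^{-1})^*\bigl(e^{h_t}F_t\bigr), \]
which is precisely $(H\#F)_t$. For the inverse, differentiating $\phi_H^t\circ (\phi_H^t)^{-1} = \id$ in $t$ and solving produces the generator $-((\phi_H^t)^{-1})_* X_H(t)$. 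Another application of Lemma~\ref{lem:conformal-calculus}, combined with the identity $((\phi_H^t)^{-1})^*\alpha = e^{-h_t\circ (\phi_H^t)^{-1}}\alpha$ from the same lemma, simplifies the contraction with $\alpha$ to $-e^{-h_t}(H_t\circ\phi_H^t) = \Hbar_t$. The third formula then follows immediately by substituting the pair $(\Hbar, F)$ into the composition formula, using that the conformal factor of $\Phi_H^{-1}$ at time $t$ is $-h_t\circ (\phi_H^t)^{-1}$ and that the inverse of $(\phi_H^t)^{-1}$ is $\phi_H^t$.

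For the conjugation formula, with $\Psi_t = \phi^{-1}\circ \phi_H^t\circ \phi$ for a time-independent $\phi\in\Diff(M,\xi)$ satisfying $\phi^*\alpha = e^g\alpha$, a direct chain-rule computation yields the generator $Z_t = (\phi^{-1})_* X_H(t)$. Lemma~\ref{lem:conformal-calculus} converts $\iota(Z_t)\alpha$ into $\phi^*\bigl(\iota(X_H(t))(\phi^{-1})^*\alpha\bigr) = \phi^*\bigl(e^{-g\circ \phi^{-1}}H_t\bigr) = e^{-g}(H_t\circ\phi)$, as asserted. I do not anticipate a serious obstacle beyond careful bookkeeping of conformal factors under pullback: one must consistently distinguish $\phi^*$ from $(\phi^{-1})^*$ and keep track of whether conformal factors are composed with $\phi_H^t$ or with $(\phi_H^t)^{-1}$, but all remaining steps are standard applications of the chain rule and the elementary identities collected in Lemma~\ref{lem:conformal-calculus}.
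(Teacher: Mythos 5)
The paper does not include a proof of this lemma — it states that the proofs "are widely known" — so there is no in-paper argument to compare against. Your proposal is correct and complete: you derive the generating vector field in each case via the chain rule (obtaining $X_H + (\phi_H^t)_* X_F$, $-((\phi_H^t)^{-1})_* X_H$, and $(\phi^{-1})_* X_H$, respectively), contract with $\alpha$, and apply the identities $\iota(\phi_* X)\beta = (\phi^{-1})^*(\iota(X)\phi^*\beta)$ and $(\phi^{-1})^*\beta = e^{-h\circ\phi^{-1}}\beta$ from Lemma~\ref{lem:conformal-calculus}; the third formula correctly follows by substituting $\Hbar$ for $H$ in the first formula together with the conformal factor $-h_t\circ(\phi_H^t)^{-1}$ of $\Phi_H^{-1}$. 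This is exactly the standard argument the authors are alluding to.
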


We call a triple $(\Phi, H, h)$ a smooth \emph{contact dynamical system} if $\Phi = \Phi_H$ is a smooth contact isotopy with contact Hamiltonian $H$ and conformal factor $h$, and denote the group of such triples by $\C (M,\alpha)$.
The subgroup of smooth \emph{strictly contact dynamical systems} $(\Phi, H, 0)$ is denoted by $\mathcal{SCDS} (M,\alpha)$.
As we have seen above, the smooth isotopy $\Phi$ and the contact form $\alpha$ together uniquely determine the contact Hamiltonian $H$ and the conformal factor $h$, and conversely, given a contact form $\alpha$, the Hamiltonian $H$ uniquely determines both $\Phi$ and $h$.
This correspondence depends on the choice of contact form.
However, the groups of contact diffeomorphisms and of smooth contact isotopies do not.
More precisely, we have the following lemma, whose proof is straightforward.

\begin{lem} \label{lem:change-contact-form}
Let $\alpha$ and $\alpha' = e^f \alpha$ be two contact forms on a contact manifold $(M,\xi)$.
If $(\Phi, H, h) \in \C (M,\alpha)$ is a smooth contact dynamical system with respect to the contact form $\alpha$, then $(\Phi, e^f H, h + (f \circ \Phi - f)) \in \C (M,\alpha')$ is a smooth contact dynamical system with respect to the contact form $\alpha'$.
\end{lem}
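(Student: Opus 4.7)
The proof is a direct computation using the formulas $\phi_t^*\alpha = e^{h_t}\alpha$ and $\iota(X_t)\alpha = H_t$, where $X_t = (\frac{d}{dt}\phi_t)\circ\phi_t^{-1}$ denotes the time-dependent contact vector field generating $\Phi = \{\phi_t\}$. Since whether an isotopy is contact (i.e.\ preserves $\xi$) does not depend on the choice of contact form defining $\xi$, the isotopy $\Phi$ remains contact with respect to $\alpha'$; only its conformal factor and generating Hamiltonian change. So the task reduces to identifying those two objects.

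First I would verify the conformal factor. Using naturality of pullback and $\phi_t^*\alpha = e^{h_t}\alpha$, compute
\[
	\phi_t^*\alpha' = \phi_t^*(e^f\alpha) = (e^f\circ\phi_t)\,\phi_t^*\alpha = e^{f\circ\phi_t + h_t}\alpha = e^{h_t + f\circ\phi_t - f}\,\alpha'.
\]
This shows that the conformal factor of $\Phi$ with respect to $\alpha'$ is precisely $h_t' := h_t + f\circ\phi_t - f$, as claimed. In particular this defines a smooth time-dependent function on $M$.

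Next I would identify the Hamiltonian. Since $\iota(X_t)$ is $C^\infty(M)$-linear on one-forms and $\iota(X_t)\alpha = H_t$, we get
\[
	\iota(X_t)\alpha' = \iota(X_t)(e^f\alpha) = e^f\,\iota(X_t)\alpha = e^f H_t,
\]
so the contact Hamiltonian of $\Phi$ with respect to $\alpha'$ is $H_t' := e^f H_t$. One can (but need not) double-check consistency with the second equation in~(\ref{eqn:contact-ham}) by computing $\iota(X_t)d\alpha' = \iota(X_t)(df\wedge\alpha + e^f\,d\alpha)$ and using $\iota(X_t)\alpha = H_t$ together with the formula $R' = e^{-f}(R - \iota(df)_{\text{lift}})$ for the Reeb vector field of $\alpha'$; this will recover $(R'.H_t')\alpha' - dH_t'$, but it is not strictly needed since existence of a contact Hamiltonian with respect to $\alpha'$ follows immediately from $\Phi$ being contact.

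The main (and only) obstacle is bookkeeping: keeping track of which form each pullback, contraction, and Reeb derivative is taken with respect to, and using Lemma~\ref{lem:conformal-calculus} where appropriate. Combining the two displays above shows $(\Phi, e^f H, h + (f\circ\Phi - f)) \in \C(M,\alpha')$, completing the proof.
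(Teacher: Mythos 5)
Your proof is correct, and since the paper omits the argument as ``straightforward,'' your two-line computation (pullback of $e^f\alpha$ for the conformal factor, $C^\infty$-linearity of $\iota(X_t)$ for the Hamiltonian) is exactly the intended argument; your observation that the second equation in~(\ref{eqn:contact-ham}) need not be checked, because the contact Hamiltonian of a contact isotopy is already determined by $\iota(X_t)\alpha' = H'_t$, is also the right way to avoid the Reeb-field bookkeeping.
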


Here the notation $f \circ \Phi$ stands for the function whose value at $(t,x) \in [0,1] \times M$ is $(f \circ \Phi) (t,x) = f (\phi_t (x))$.
When a contact form $\alpha$ on $(M,\xi)$ is chosen, we always assume the contact Hamiltonian and conformal factor of a smooth contact isotopy are determined by this contact form $\alpha$.
Moreover, even when no contact form is selected explicitly, writing $\Phi = \Phi_H$ for a contact isotopy $\Phi$, a contact dynamical system $(\Phi, H, h)$, or calling $H$ the contact Hamiltonian of $\Phi$ and $h$ its conformal factor, implies the choice of a contact form $\alpha$, which is fixed for the remainder of a particular statement or short discussion, unless explicit mention is made to the contrary.

The \emph{length} \cite{banyaga:ugh11} of a contact isotopy $\Phi = \Phi_H$ of $(M, \xi)$ is defined via its contact Hamiltonian $H \colon [0,1] \times M \to \R$ to be
\begin{align} \label{eqn:contact-length}
	\ell_\alpha (\Phi) = \| H \|_\alpha = \int_0^1 \left( \max_{x \in M} H (t,x) - \min_{x \in M} H (t,x) + \left| c_\alpha (H_t) \right| \right) dt,
\end{align}
where $c_\alpha$ denotes the average value of a function $M \to \R$ with respect to the measure induced by the volume form $\nu_\alpha$, i.e.
\begin{align} \label{eqn:average-value}
	c_\alpha (H_t) = \frac{1}{\int_M \nu_\alpha} \cdot \int_M H_t \, \nu_\alpha.
\end{align}
We also refer to $\| H \|_\alpha$ as the norm of $H$.
Note that $\ell_\alpha$ depends on the choice of contact form $\alpha$ since this choice determines the contact Hamiltonian $H$ of a contact isotopy $\Phi$, and $\| \cdot \|_\alpha$ also depends on $\alpha$ via the definition of the average value $c_\alpha$ with respect to the volume form $\nu_\alpha$.
However, the norms and length functions resulting from these choices are all mutually equivalent.
To see this, we make a simple observation.
For an autonomous function $H$ on $M$, the oscillation $\max H(x) - \min H (x)$ on $M$ is denoted by $\osc (H)$.

\begin{lem} \label{lem:max-norm}
The norm $\osc (H) + | c_\alpha (H) |$ and the maximum norm $| H | = \max | H (x) |$ of functions $M \to \R$ are equivalent.
\end{lem}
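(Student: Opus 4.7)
The statement is an equivalence of two norms on $C^\infty(M)$, so the plan is to establish two one-sided inequalities with explicit constants.

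For the upper bound, I would use the trivial estimates $\max H \le |H|$ and $-\min H \le |H|$, which give $\osc(H) = \max H - \min H \le 2|H|$. Since $c(H)$ is an average of values of $H$ with respect to the probability measure $\nu_\alpha / \int_M \nu_\alpha$, we also have $|c(H)| \le |H|$. Adding these yields $\osc(H) + |c(H)| \le 3|H|$.

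For the lower bound, the key observation is that the mean value $c(H)$ always lies between $\min H$ and $\max H$, so both $\max H - c(H)$ and $c(H) - \min H$ are nonnegative and bounded above by $\osc(H)$. Writing $\max H = c(H) + (\max H - c(H))$ and applying the triangle inequality gives $|\max H| \le |c(H)| + \osc(H)$, and analogously $|\min H| \le |c(H)| + \osc(H)$. Since $|H| = \max(|\max H|, |\min H|)$, this yields $|H| \le \osc(H) + |c(H)|$.

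Combining both directions, we obtain $|H| \le \osc(H) + |c(H)| \le 3|H|$, proving the equivalence. There is no real obstacle here; the lemma is essentially a bookkeeping fact about how the average value controls the gap between $|H|$ and $\osc(H)$, and the only mild point is remembering that $c$ is defined via a normalized integral so that constant functions have $c(H) = H$.
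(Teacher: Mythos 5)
Your proof is correct and follows essentially the same route as the paper's, which simply records the chain $|H| \le \osc(H) + |c(H)| < 3|H|$ without spelling out the elementary estimates; you have filled in those details (using that $c(H)$ lies between $\min H$ and $\max H$, and that $|H| = \max(|\max H|, |\min H|)$) accurately. The only cosmetic difference is that the paper asserts a strict upper bound $< 3|H|$ while you give $\le 3|H|$; this does not affect the equivalence of the norms.
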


\begin{proof}
$| H | \le \osc (H) + | c_\alpha (H) | < 3 | H |$.
These inequalities are sharp.
\end{proof}

We prefer the norm $\| \cdot \|_\alpha$ defined by equation~(\ref{eqn:contact-length}), since it closely resembles the choice of norm of a Hamiltonian function on a symplectic manifold in the next section.
This relation is most prominent when $(M,\alpha)$ is the total space of a principle $S^1$-bundle over an integral symplectic manifold \cite{banyaga:ugh11}.
The choice of norm in the Hamiltonian case is explained in \cite{mueller:ghh07}.

\begin{lem} \label{lem:equiv-norms}
Let $\alpha$ be a contact form on $(M,\xi)$, and $f$ be a smooth function on $M$.
Then there exist positive constants $c (f)$ and $C (f)$ that depend only on $f$, such that
	\[ c (f) \cdot \| H \|_\alpha \le \| e^f H \|_{\alpha'} \le C (f) \cdot \| H \|_\alpha \]
for any function $H \colon [0,1] \times M \to \R$, and any contact form $\alpha' = e^g \alpha$.
In particular, the norms $\| \cdot \|_\alpha$ and $\| \cdot \|_{\alpha'}$ as well as the induced length functions $\ell_\alpha$ and $\ell_{\alpha'}$ on the group of smooth contact isotopies are equivalent.
Moreover, if a collection of smooth functions $f_i$ is uniformly bounded independently of $i$, $| f_i | < c < \infty$, then the constants $c (f_i)$ and $C (f_i)$ can be chosen independently of $i$.
\end{lem}

\begin{proof}
By Lemma~\ref{lem:max-norm}, one can choose $c (f) = \frac{1}{3} e^{- | f |}$ and $C (f) = 3 e^{| f |}$.
The choices $f = 0$ and $f = g$ prove the equivalence of the norms and length functions, respectively.
\end{proof}

For reasons that will soon become apparent, for conformal factors $h \colon [0,1] \times M \to \R$ however, we instead work with the maximum norm
\begin{align} \label{eqn:max-norm}
	| h | = \max_{0 \le t \le 1} \max_{x \in M} \left| h (t,x) \right|.
\end{align}
Given a smooth contact isotopy $\Phi$, its conformal factor $h$ also depends on the contact form $\alpha$, and transforms under a change of contact form according to Lemma~\ref{lem:change-contact-form}.
The behavior of this change of the conformal factors of a convergent sequence is explained in Lemma~\ref{lem:alpha-indep} below.

A choice of Riemannian metric $g_M$ on $M$ gives rise to a distance function $d_M$ on $M$, and thus on the spaces $\Homeo(M)$ of homeomorphisms and $\PHomeo (M)$ of isotopies of homeomorphisms: for two homeomorphisms $\phi$ and $\psi$ of $M$, we have
	\[ d_M (\phi, \psi) = \max_{x \in M} d_M (\phi (x), \psi (x) ), \]
and this \emph{uniform distance} induces the compact-open topology.
In particular, the topology is independent of the initial choice of Riemannian metric.
The metric $d_M$ is not complete, but it gives rise to a complete \emph{$C^0$-metric} $\dbar_M$ that induces the same topology, where
	\[ \dbar_M (\phi, \psi) = d_M (\phi, \psi) + d_M (\phi^{-1}, \psi^{-1}). \]
In fact, the Finsler norms induced by different choices of Riemannian metrics $g_1$ and $g_2$ are locally equivalent (in a coordinate chart), so by compactness of $M$, they gives rise to equivalent distance functions $d_1$ and $d_2$ on $M$.
The induced metrics $d_1$ and $d_2$ on $\Homeo(M)$ and $\PHomeo (M)$ associated to different choices of Riemannian metrics as well as the corresponding metrics $\dbar_1$ and $\dbar_2$ are therefore also equivalent.

Both metrics $d_M$ and $\dbar_M$ define distances between isotopies $\Phi = \{ \phi_t \}$ and $\Psi = \{ \psi_t \}$, equal to the maximum over all times $t$ of the distances of the time-$t$ maps $\phi_t$ and $\psi_t$, and again the metric
	\[ \dbar_M (\Phi, \Psi) = \max_{0 \le t \le 1} \dbar_M (\phi_t, \psi_t) \]
is complete, while $d_M (\Phi, \Psi)$ is not.
However, if a sequence $\Phi_i$ of isotopies converges uniformly to an isotopy of homeomorphisms $\Phi$, or in other words, a limit in $\PHomeo (M)$ with respect to the distance $d_M$ exists, then this sequence is also Cauchy with respect to the distance $\dbar_M$, and $C^0$-converges to the isotopy $\Phi$.
Moreover, composition and inversion are continuous with respect to the $C^0$-metric.
The same remarks apply to sequences of homeomorphisms.

A Cauchy sequence of smooth functions with respect to the maximum norm $| \cdot |$ in equation~(\ref{eqn:max-norm}) converges to a continuous time-dependent function $h \colon [0,1] \times M \to \R$.
On the other hand, a Cauchy sequence of smooth contact Hamiltonians converges with respect to the norm $\| \cdot \|_\alpha$ in equation~(\ref{eqn:contact-length}) to a so called $L^\oneinfty$-function $H \colon [0,1] \times M \to \R$.
This function may not be continuous but only $L^1$ in the time variable $t \in [0,1]$.
However, by standard arguments from measure theory, $H_t$ is defined for almost all $t \in [0,1]$, and is a continuous function of the space variable $x \in M$ for each such $t$.
Thus it can be thought of as an element of the space of functions $L^1 ([0,1],C^0 (M))$ of $L^1$-functions of the unit interval taking values in the space $C^0 (M)$ of continuous functions of $M$.
Strictly speaking, such a function should be thought of as an equivalence class of functions, where two functions are considered equivalent if and only if they agree for almost all $t \in [0, 1]$, but as is customary in measure theory, we will mostly disregard this subtlety in our treatment, and speak of an $L^\oneinfty$-function when it can not lead to any confusion.

\section{Review of Hamiltonian geometry and Hamiltonian dynamics} \label{sec:ham-geometry}
Let $(W,\omega)$ be a smooth manifold of even dimension $2 n$ equipped with a \emph{symplectic form} $\omega$.
That is, the two-form $\omega$ is closed and non-degenerate, i.e.\ $\omega^n \not= 0$, and in particular induces an orientation of $W$.
Again unless explicit mention is made to the contrary, $W$ is assumed to be closed, and for simplicity, we only consider manifolds that are connected.
If $\phi^* \omega = e^h \omega$ for a smooth function $h$ on $W$, that is, if a diffeomorphism conformally rescales the symplectic form $\omega$, and $n > 1$, then $h$ must be constant because $\omega$ is closed, and by compactness, this constant is equal to zero.
A diffeomorphism $\phi$ that preserves the \emph{symplectic structure} $\omega$ is called a \emph{symplectic diffeomorphism}.
The group of symplectic diffeomorphisms of $(W,\omega)$ is denoted $\Symp (W,\omega) = \{ \phi \in \Diff (W) \mid \phi^* \omega = \omega \}$, and its identity component is $\Symp_0 (W,\omega)$.

The unique feature of a smooth \emph{Hamiltonian} dynamical system of $(W,\omega)$ is that it is defined up to normalization by a smooth time-dependent function $H \colon [0,1] \times W \to \R$.
We recall the aspects of smooth Hamiltonian dynamics that we will need in the following, and of topological Hamiltonian dynamics to put the approach taken in Section~\ref{sec:topo-contact-dynamics} in perspective.

The non-degeneracy of the symplectic form $\omega$ implies that to a smooth time-dependent Hamiltonian function $H \colon [0,1]\times W \to \R$ is associated a unique time-dependent vector field $X_H = \{ X_H^t \}$, defined by the equation
	\[ \iota (X_H^t) \omega = d H_t, \]
and the isotopy $\Phi_H = \{ \phi_H^t \}$ generated by $X_H^t$ is by definition the solution to the ordinary differential equation
	\[ \frac{d}{dt} \phi_H^t = X_H^t \circ \phi_H^t, \quad \phi_H^0 = \id. \]
On the other hand, suppose for a smooth family of vector fields $X_t$ generating an isotopy $\Phi = \{ \phi_t \}$, the one-form $\iota (X_t) \omega$ is exact at each time $t$.
Then there exists a unique normalized smooth Hamiltonian function $H \colon [0,1] \times W \to \R$ satisfying the identity $\iota (X_t) \omega = dH_t$.
A time-dependent function is \emph{normalized} if it has average value zero with respect to the Liouville measure induced by the volume form $\omega^n$ at each time $t$.
(If $M$ is open, the Hamiltonians under consideration are compactly supported in the interior.)
We assume throughout that all Hamiltonians are normalized, so that there is a one-to-one correspondence between smooth Hamiltonian isotopies and smooth Hamiltonian functions.
As in the contact case in the previous section, we write $H \mapsto \Phi$ when the isotopy $\Phi = \Phi_H$ is generated by the smooth Hamiltonian $H$, and similarly $H \mapsto \phi$ if the time-one map of the isotopy is $\phi = \phi_H^1$.

Lemma~\ref{lem:ham-gp-str} holds verbatim, except that the conformal factors are all identically zero in this case.
We denote by $\mathcal{HDS} (W,\omega)$ the collection of pairs $(\Phi, H)$, where $\Phi$ is a smooth Hamiltonian isotopy, generated by the smooth normalized Hamiltonian function $H \colon [0,1] \times W \to \R$, and call $(\Phi, H)$ a \emph{smooth Hamiltonian dynamical system} of the symplectic manifold $(W,\omega)$.
The spaces of smooth Hamiltonian dynamical systems, smooth Hamiltonian functions, and smooth Hamiltonian isotopies and their time-one maps, all form groups.
The latter is denoted by $\Ham (W,\omega)$, and is a normal subgroup of the group of symplectic diffeomorphisms of $(W,\omega)$.

The norm used to define the metric on the space of normalized Hamiltonian functions is the usual \emph{Hofer length} or \emph{Hofer norm}
\begin{align} \label{eqn:hofer-length}
	\ell_{\rm Hofer} (\Phi_H) = \| H \|_{\rm Hofer} = \int_0^1 \left( \max_{x \in W} H (t,x) - \min_{x \in W} H (t,x) \right) dt,
\end{align}
which is in fact the same as equation~(\ref{eqn:contact-length}) since the Hamiltonian $H$ has mean value zero.

The vital aspects of the smooth theory are the one-to-one correspondence between isotopies and Hamiltonian functions, the group identities, and the energy-capacity inequality.
The latter two play a crucial role in establishing the Hofer norm on $\Ham (W,\omega)$, where by definition
	\[ \| \phi \|_{\rm Hofer} = E (\phi) = \inf_{H \mapsto \phi} \| H \|_{\rm Hofer}. \]

The first step towards defining a {\em topological} Hamiltonian dynamical system is the proper choice of metric on the group of smooth Hamiltonian isotopies.
Indeed, only a combination of the dynamical Hofer length and the topological $C^0$-distance yields such a metric $d_{\rm ham}$ on the group of smooth Hamiltonian isotopies.
Let $\Phi_H$ and $\Phi_F$ be two smooth Hamiltonian isotopies, and set
	\[ d_{\rm ham} (\Phi_H, \Phi_F) = \dbar_W (\Phi_H, \Phi_F) + \| \Hbar \# F \| = \dbar_W (\Phi_H, \Phi_F) + \| H - F \|. \]
This \emph{Hamiltonian metric} is no longer bi-invariant, but the upshot is that the completion with respect to $d_{\rm ham}$ of the group $\mathcal{HDS}(W,\omega)$ results in a group of pairs $(\Phi, H)$, where $\Phi$ is an isotopy of homeomorphisms of $M$, and $H$ is an $L^\oneinfty$-function.
See section~3.2 in \cite{mueller:ghh08} for a detailed explanation of the choice of completion.
An isotopy $\Phi = \{ \phi_t \}$ of homeomorphisms is a \emph{topological Hamiltonian isotopy} of $(W,\omega)$, if there exists a $d_{\rm ham}$-Cauchy sequence of smooth Hamiltonian isotopies $\Phi_{H_i}$ that uniformly converges to $\Phi$.
The $L^\oneinfty$-limit $H$ of the sequence of normalized smooth Hamiltonians $H_i$ is called a \emph{topological Hamiltonian function}, and a homeomorphism $\phi$ is a \emph{Hamiltonian homeomorphism} if it is the time-one map of a topological Hamiltonian isotopy \cite{mueller:ghh07}.
We denote the collection $(\Phi, H)$ of such \emph{topological Hamiltonian dynamical systems} by $\mathcal{THDS}(W,\omega)$.
By \cite{mueller:ghh07}, $\mathcal{THDS}(W,\omega)$ has the structure of a topological group, whose group operations project continuously to the spaces of topological Hamiltonian isotopies, functions, and homeomorphisms, and the isotopy associated to a topological Hamiltonian function is unique.
The converse that the topological Hamiltonian function associated to a topological Hamiltonian isotopy is unique, is proven in increasing generality in \cite{viterbo:ugh06} and \cite{buhovsky:ugh11}.
The groups of topological Hamiltonian isotopies and Hamiltonian homeomorphisms are denoted by $\PHameo(W,\omega)$ and $\Hameo(W,\omega)$, respectively.
The notion of topological Hamiltonian isotopy and Hamiltonian homeomorphism has been generalized to symplectic isotopies and their time-one maps by Banyaga in the article \cite{banyaga:gss10}.

\section{Symplectization} \label{sec:symplectization}
Let $\alpha$ be a contact form on $M$ defining the contact structure $\xi$, i.e.\ $\ker \alpha = \xi$.
The \emph{symplectization} $(W,\omega)$ of $(M,\alpha)$ is the exact symplectic manifold
	\[ \left( M \times \R, - d ( e^\theta \pi_1^* \alpha ) \right), \]
where $\theta$ is the coordinate on $\R$, and $\pi_1 \colon M \times \R \to M$ is the projection to the first factor.
The exact symplectic diffeomorphism class of $(W,\omega)$ depends only on the contact structure $\xi$ and not on the choice of contact form $\alpha$.
Indeed, if $\alpha' = e^f \alpha$ is any other contact form on $(M,\xi)$, and $(W,\omega')$ denotes the symplectization of $(M,\alpha')$, then the diffeomorphism $\phi_f \colon (W,\omega') \to (W,\omega)$ given by mapping $(x,\theta)$ to $(x,\theta + f (x))$ is exact symplectic, i.e.\ $\phi_f^* (e^\theta \pi_1^* \alpha) = e^\theta \pi_1^* \alpha'$.

A contact diffeomorphism $\phi$ lifts to a symplectic diffeomorphism
\begin{align} \label{eqn:lifted-diffeo}
	\phihat (x,\theta) = (\phi (x), \theta - h (x))
\end{align}
of $(W,\omega)$, where $h$ is the conformal factor of $\phi$ given by $\phi^* \alpha = e^h \alpha$.
Conversely, a diffeomorphism $\phihat$ of $W$ of the form in equation~(\ref{eqn:lifted-diffeo}) is symplectic if and only if $\phi$ is a contact diffeomorphism of $M$ with $\phi^* \alpha = e^h \alpha$.
A contact isotopy $\Phi_H = \{ \phi_t \}$ then lifts to a Hamiltonian isotopy $\Phi_\wH = \{ \phihat_t \}$ of $(W,\omega)$, generated by the Hamiltonian
\begin{align} \label{eqn:admissible-ham}
	\wH (t, x, \theta) = e^\theta H (t, x).
\end{align}
A diffeomorphism (or homeomorphism) of the form as in equation~(\ref{eqn:lifted-diffeo}), where $\phi$ is a diffeomorphism (homeomorphism) of $M$, and $h$ a smooth (continuous) function on $M$, is called \emph{admissible}.
An isotopy $\{ \phi_t \}$ is called admissible if $\phi_t$ is admissible for each $t$, and if it is Hamiltonian, then its Hamiltonian of the form as in equation~(\ref{eqn:admissible-ham}) is also called admissible.

Let $g_M$ be a Riemannian metric on $M$, and recall from Section~\ref{sec:contact-geometry} the corresponding distances $d_M$ and $\dbar_M$ on $\Homeo (M)$ and $\PHomeo (M)$, which both induce the compact-open topology.
The Riemannian metric $g_M$ lifts to the split Riemannian metric $g_W = \pi_1^*g_M + d\theta \otimes d\theta$ on the symplectization $W$.
Given two admissible homeomorphisms $\phihat (x,\theta) = (\phi (x), \theta - h (x))$ and $\psihat (x,\theta) = (\psi (x), \theta - g (x))$ of $W$, the sums
	\[ d_W (\phihat, \psihat) = d_M (\phi, \psi) + | h - g | \]
and $\dbar_W (\phihat, \psihat) = \dbar_M (\phi, \psi) + | h - g | + | h \circ \phi^{-1} - g \circ \psi^{-1} |$
are finite, and the two distances $d_W$ and $\dbar_W$ are metrics on the group of admissible homeomorphisms of $W$, with the latter being complete.
In particular, given a sequence $\phi_i$ of contact diffeomorphism with $\phi_i^* \alpha = e^{h_i} \alpha$, the sequence of lifts $\phihat_i$ defined by equation~(\ref{eqn:lifted-diffeo}) is $\dbar_W$-Cauchy, if and only if the sequence $\phi_i$ is $\dbar_M$-Cauchy and the sequence of functions $h_i$ is uniformly Cauchy.
Moreover, the sequence $\phi_i$ then $C^0$-converges to a homeomorphism $\phi$, the smooth functions $h_i$ converge uniformly to a continuous function $h$, and the sequence $\phihat_i$ $C^0$-converges to the homeomorphism $\phihat$ of $W$ given by $\phihat (x,\theta) = (\phi (x), \theta - h (x))$.
Similarly one obtains a metric $d_W$ and a complete metric $\dbar_W$ on the group of admissible continuous isotopies of homeomorphisms, and for a sequence of contact isotopies of $M$, the lifted Hamiltonian isotopies are $C^0$-Cauchy, if and only if the contact isotopies are $C^0$-Cauchy and their conformal factors are uniformly Cauchy.
The limits are again related by the identity~(\ref{eqn:lifted-diffeo}) at each time $t$.

In Section~\ref{sec:ham-geometry}, we discussed Hamiltonian isotopies of \emph{non-compact} manifolds $W$, generated by \emph{compactly supported} Hamiltonians.
Clearly an admissible Hamiltonian is never compactly supported, and its oscillation is not finite, unless it vanishes identically.
Similarly, it does not make sense to normalize an admissible Hamiltonian by means of its average value on $W$.
However, an admissible Hamiltonian isotopy has a unique admissible Hamiltonian, and it is possible to define a norm suitable for admissible Hamiltonians.
For $a < b$ real numbers, let $K_{a,b} = M \times [a,b]$ be a compact subset of $W$, and restrict to it the oscillation of functions on $[0,1] \times W$,
\begin{align*}
	\| \wH \|_\alpha^{a,b} & = \int_0^1 \left( \max_{a \le \theta \le b} \max_{x \in M} \wH (t,x,\theta) - \min_{a \le \theta \le b} \min_{x \in M} \wH (t,x,\theta) \right) dt \\
	& = \int_0^1 \left( \max_{a \le \theta \le b} \left( e^\theta \max_{x \in M} H (t,x) \right) - \min_{a \le \theta \le b} \left( e^\theta \min_{x \in M} H (t,x) \right) \right) dt.
\end{align*}
When restricted to admissible Hamiltonians, the functions $\| \cdot \|_\alpha^{a,b}$ define norms, and different choices of $a$ and $b$ give rise to equivalent norms.
Moreover,
	\[ \min(e^b - e^a, e^a) \cdot \| H \|_\alpha \le \| \wH \|_\alpha^{a,b} \le e^b \cdot \| H \|_\alpha, \]
and these inequalities are sharp.
We equip the space of admissible Hamiltonians with the metric and topology induced by any of the norms $\| \cdot \|_\alpha^{a,b}$.
In particular, a sequence $\wH_i$ of admissible Hamiltonians is Cauchy, if and only if the sequence of contact Hamiltonians $H_i$ is Cauchy with respect to the norm $\| \cdot \|_\alpha$ defined by equation~(\ref{eqn:contact-length}).

\section{The contact energy-capacity inequality} \label{sec:energy-capacity-ineq}
The preceding Sections~\ref{sec:contact-geometry}-\ref{sec:symplectization} provide a precise explanation of the statement of Theorem~\ref{thm:energy-capacity-ineq}, and we are ready to give the proof.

\begin{proof}[Proof of Theorem~\ref{thm:energy-capacity-ineq}]
Again consider the symplectization $W = M \times \R$ of $(M,\alpha)$, with symplectic form $\omega = - d (e^\theta \pi_1^* \alpha)$.
Let $a < b$ be two real numbers, and denote $\Khat = K \times [a,b] \subset W$.
Let $c = | h |$.
Equation~(\ref{eqn:lifted-diffeo}) implies that for all $t \in [0,1]$,
	\[ \phi_{\wH}^t (\Khat) \subset \phi_H^t (K) \times [a - c, b + c], \]
and thus $\phi_{\wH}^1 (\Khat) \cap \Khat = \emptyset$.
Let $\rho \colon \R \to [0,1]$ be a smooth cut-off function such that
	\[ \rho(\theta) =
		\left\{
		\begin{array}{ll}
		1 &  \theta \in [a-c, b+c] \\
		0 &  \theta \in \R \setminus (a - c - 1, b + c + 1).
		\end{array}
		\right. \]
By construction and by equation~(\ref{eqn:lifted-diffeo}), we have $\phi_{\rho \wH}^t (x,\theta) = \phi_{\wH}^t (x,\theta)$ for all $x \in M$, $\theta \in [a, b]$, and all times $t \in [0,1]$, and therefore $\phi^1_{\rho \wH}$ also displaces the set $\Khat$.
Thus by the energy-capacity inequality~(\ref{eqn:energy-capacity-ineq}) for compactly supported Hamiltonians \cite{lalonde:gse95},
	\[ 0 < \frac{1}{2} c (\Khat) \leq E (\phi_{\rho \wH}^1). \]
On the other hand, 
	\[ E (\phi_{\rho \wH}^1) \leq \| \rho \wH \|_{\rm Hofer}\leq \| \rho \wH \|_\alpha^{a-c-1, b+c+1} \leq e^{b + c + 1} \| H \|_\alpha,\]
and therefore
\begin{align} \label{eqn:capacity-bound}
	0 < \frac{c (\Khat)}{2 e^{b + 1}} e^{- | h |} \leq \| H \|_\alpha,
\end{align}
proving the theorem.
\end{proof}

It is tempting to think that choosing a smaller value of $b$ above will produce a stronger lower bound in inequality~(\ref{eqn:capacity-bound}).
However, by decreasing $b$, the capacity $c (\Khat)$ decreases as well.
The choice of contact form $\alpha$ affects inequality~(\ref{eqn:capacity-bound}) similarly, as the symplectic form and thus the capacity, the conformal factor, and the Hamiltonian all depend on this choice.
For each of the equivalent norms $\| \cdot \|_\alpha$, defined by a contact form $\alpha$ on $(M,\xi)$, a better lower bound than in inequality~(\ref{eqn:capacity-bound}) is given by
	\[ 0 < \sup_K \left( \frac{c (\Khat)}{2 e^{b + 1}} \right) e^{- | h |} \leq \| H \|_\alpha < \infty, \]
where the supremum is taken over all compact subsets $K$ of $M$ that are displaced by $\phi$, and over all `lifts' $\Khat$ of $K$.
Note that our estimates are valid for any cut-off function $\rho$ as above, and thus one does not need the term $+ 1$ in the exponent above and in inequality~(\ref{eqn:capacity-bound}).
In fact, one could also consider the supremum over all cut-off functions $\rho$ such that $\phi_{\rho \wH}^1 (\Khat) \cap \Khat = \emptyset$.
Conversely, it is also possible to define displacement energy type invariants of subsets of $M$ in this manner.

\section{Topological contact dynamics} \label{sec:topo-contact-dynamics}
In this section, we define the contact distance between two contact isotopies, or between two contact dynamical systems, and introduce topological contact dynamics.

Recall from Section~\ref{sec:contact-geometry} that if $\Phi$ is a smooth contact isotopy of $(M,\xi)$, then a contact form $\alpha$ with $\ker \alpha = \xi$ uniquely determines a generating smooth contact Hamiltonian $H \colon [0,1] \times M \to \R$, and a smooth conformal factor $h \colon [0,1] \times M \to \R$.
If $\alpha' = e^f \alpha$ is any other contact form on $(M,\xi)$, then the corresponding smooth contact Hamiltonian $H'$ and smooth conformal factor $h'$ of the contact isotopy $\Phi$ are given by $H' = e^f H$ and $h' = h + (f \circ \Phi - f)$, respectively.
Also recall that the notation $f \circ \Phi$ stands for the function whose value at $(t,x) \in [0,1] \times M$ is $(f \circ \Phi) (t,x) = f (\phi_t (x))$.
We mention again that when a contact form $\alpha$ on $(M,\xi)$ is chosen, we always assume the contact Hamiltonian and conformal factor of a smooth contact isotopy are determined by this contact form $\alpha$.
Moreover, even when no contact form is selected explicitly, writing $\Phi = \Phi_H$ for a contact isotopy $\Phi$, a contact dynamical system $(\Phi, H, h)$, or calling $H$ the contact Hamiltonian of $\Phi$ and $h$ its conformal factor, implies the choice of a contact form $\alpha$, which is fixed for the remainder of a particular statement or short discussion, unless explicit mention is made to the contrary.

\begin{dfn}[Contact distance]
Let $\alpha$ be a contact form on a contact manifold $(M,\xi)$.
We define the \emph{contact distance} with respect to $\alpha$ between two smooth contact isotopies $\Phi = \Phi_H$ and $\Psi = \Phi_F$ by
\begin{align} \label{eqn:contact-distance}
	d_\alpha (\Phi, \Psi) = d_\alpha (\Phi_H, \Phi_F) = \dbar_M (\Phi_H, \Phi_F) + | h - f | + \| H - F \|_\alpha.
\end{align}
\end{dfn}

Recall from Section~\ref{sec:contact-geometry} that a $\dbar_M$-Cauchy sequence $\Phi_i$ of contact isotopies converges to a continuous isotopy $\Phi = \{ \phi_t \}_{0 \le t \le 1}$, a $\| \cdot \|_\alpha$-Cauchy sequence $H_i$ of contact Hamiltonians converges to an $L^\oneinfty$-function $H \colon [0,1] \times M \to \R$, and a $| \cdot |$-Cauchy sequence of conformal factors $h_i$ uniformly converges to a continuous function $h \colon [0,1] \times M \to \R$.

\begin{dfn}[Topological contact dynamical system] \label{dfn:topo-contact-dynamics}
Let $\alpha$ be a contact form on a contact manifold $(M,\xi)$.
A triple $(\Phi, H, h)$ is a \emph{topological contact dynamical system} with respect to the contact form $\alpha$, if there exists a sequence $(\Phi_{H_i}, H_i, h_i)$ of smooth contact dynamical systems, such that as $i \to \infty$, the sequence $\Phi_{H_i}$ $C^0$-converges to the continuous isotopy $\Phi \in \PHomeo (M)$, the sequence $H_i$ of smooth contact Hamiltonians satisfies $\| H - H_i \|_\alpha \to 0$, and the sequence $h_i$ of smooth conformal factors converges uniformly to the continuous function $h$.
The $L^\oneinfty$-function $H \colon [0,1] \times M \to \R$ is called a \emph{topological contact Hamiltonian} with \emph{topological contact isotopy} $\Phi$ and \emph{topological conformal factor} $h \colon [0,1] \times M \to \R$.
The space of topological contact dynamical systems is denoted by $\TC (M,\alpha)$, and the space of topological contact isotopies by $\PHomeo (M,\xi)$.
By a slight abuse of notation, we denote the natural extension $\overline{d}_\alpha$ of the contact metric defined by equation~(\ref{eqn:contact-distance}) also by $d_\alpha$, and call it the \emph{contact metric} on the space $\TC (M,\alpha)$.
\end{dfn}

By definition a topological contact dynamical system represents an equivalence class of Cauchy sequences of smooth contact dynamical systems with respect to the contact distance $d_\alpha$ defined above.
The metric $d_\alpha$ does depend on the choice of contact form $\alpha$, however, the different choices of contact form lead to equivalent metrics.
In particular, the collection of Cauchy sequences with respect to $d_\alpha$ as well as the topology induced by $d_\alpha$ only depend on the contact structure $\xi$.

\begin{lem} \label{lem:alpha-indep}
Suppose $\alpha$ and $\alpha' = e^g \alpha$ are two contact forms on $(M,\xi)$.
Then there exist constants $m (g)$ and $M (g)$ that depend only on the function $g$, such that
	\[ m (g) \cdot d_\alpha (\Phi, \Psi) \le d_{\alpha'} (\Phi, \Psi) \le M (g) \cdot d_\alpha (\Phi, \Psi) \]
for any smooth contact isotopies $\Phi$ and $\Psi$ of $(M,\xi)$.
Moreover, a topological contact dynamical system $(\Phi, H, h)$ with respect to $\alpha$ is transformed to the topological contact dynamical system
	\[ (\Phi, e^g H, h + (g \circ \Phi - g)) \]
with respect to $e^g \alpha$, and the space $\PHomeo (M,\xi)$ of topological contact isotopies is independent of the choice of contact form $\alpha$.
\end{lem}

\begin{proof}
Let $H$ be the smooth contact Hamiltonian generating the isotopy $\Phi = \{ \phi_t \}$ with respect to $\alpha$.
The smooth function $H' = e^g H$ then generates the same isotopy with respect to the contact form $\alpha'$.
Furthermore, since $\phi_t^* \alpha = e^{h_t} \alpha$, we have
	\[ \phi_t^* \alpha' = e^{h_t + (g \circ \phi_t - g)} \alpha'. \]
Similarly, if $F$ and $f$ are the smooth contact Hamiltonian and conformal factor of the isotopy $\Psi$ with respect to $\alpha$, then $e^g F$ and $f + (g \circ \Psi - g)$ are the smooth contact Hamiltonian and conformal factor of $\Psi$ with respect to $\alpha'$.
Then
\begin{align*}
	d_{\alpha'} (\Phi, \Psi) = {} & \dbar_M (\Phi, \Psi) + | h + g \circ \Phi - f - g \circ \Psi | + \| e^g H - e^g F \|_{\alpha'} \\
	\leq {} & \dbar_M (\Phi, \Psi) + | h - f | + | g \circ \Phi - g \circ \Psi | + C (g) \cdot \| H - F \|_\alpha \\
	\leq {} & \dbar_M (\Phi, \Psi) + | h - f | + L (g) \cdot d_M (\Phi, \Psi) + C (g) \cdot \| H - F \|_\alpha, \\
	\leq {} & \max (1 + L (g), C (g)) \cdot d_\alpha (\Phi, \Psi),
\end{align*}
where the positive constant $C (g)$ is as in Lemma~\ref{lem:equiv-norms}, and $L (g) > 0$ is a Lipschitz constant.
Reversing the roles of $\alpha$ and $\alpha'$ proves the other inequality.
In particular, a sequence of smooth contact isotopies is Cauchy with respect to $\alpha$ if and only if it is Cauchy with respect to $\alpha'$.
The formula for the transformed topological contact dynamical system follows from the above computations.
\end{proof}

Examples of non-smooth topological contact dynamical systems are given in \cite{ms:tcd2}.

\begin{thm}[Uniqueness of topological Hamiltonian isotopy and conformal factor] \label{thm:uniqueness}
Fix a contact form $\alpha$ on a contact manifold $(M,\xi)$.
Then the topological Hamiltonian isotopy and the topological conformal factor of a topological contact Hamiltonian are unique.
More precisely, if $(\Phi, H, h)$ and $(\Psi, H, g)$ are two topological contact dynamical systems with the same topological contact Hamiltonian, then $\Phi = \Psi$ and $h = g$.
\end{thm}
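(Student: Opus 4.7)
Take smoothing sequences $(\Phi_{H_i}, H_i, h_i) \to (\Phi, H, h)$ and $(\Phi_{F_j}, F_j, g_j) \to (\Psi, H, g)$ in $\TC(M, \alpha)$ as in Definition~\ref{dfn:topo-contact-dynamics}; since both sequences converge to the same topological contact Hamiltonian $H$, the triangle inequality gives $\| H_i - F_j \| \to 0$ as $i, j \to \infty$. Consider the composition isotopies $\Theta_{i,j} = \Phi_{H_i}^{-1} \circ \Phi_{F_j}$. By Lemma~\ref{lem:ham-gp-str}, each $\Theta_{i,j}$ is a smooth contact isotopy generated by the contact Hamiltonian $K_{i,j}^t = e^{-h_i^t}(F_j^t - H_i^t) \circ \phi_{H_i}^t$ and having conformal factor $r_{i,j}^t = -h_i^t \circ (\phi_{H_i}^t)^{-1} \circ \phi_{F_j}^t + g_j^t$. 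The pointwise bound $|K_{i,j}^t(x)| \le e^{|h_i|} \cdot \max_y |F_j^t(y) - H_i^t(y)|$ combined with Lemma~\ref{lem:max-norm} yields $\| K_{i,j} \| \le 3 e^{|h_i|} \| F_j - H_i \|$, so $\| K_{i,j} \| \to 0$ since $|h_i|$ is uniformly bounded. Meanwhile $|r_{i,j}| \le |h_i| + |g_j|$ remains uniformly bounded in $i, j$, and $\Theta_{i,j} \to \Phi^{-1} \circ \Psi$ in $C^0$.

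To prove $\Phi = \Psi$, suppose for contradiction that $\Phi^{-1} \circ \Psi \ne \id$. Then there exist $t_0 \in (0, 1]$ and an open ball $B \subset \Int M$ whose closure is displaced by $\phi_{t_0}^{-1} \circ \psi_{t_0}$. By uniform convergence, for all sufficiently large $i, j$ the time-$t_0$ map of $\Theta_{i,j}$ also displaces $\overline{B}$. Reparametrizing the truncated isotopy $\{ \Theta_{i,j}^t \}_{0 \le t \le t_0}$ linearly onto $[0, 1]$ produces a smooth contact isotopy whose time-one map displaces $\overline{B}$, whose generating contact Hamiltonian has norm at most $\| K_{i,j} \|$, and whose conformal factor satisfies the same uniform bound as $|r_{i,j}|$. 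The contact energy-capacity inequality (Theorem~\ref{thm:energy-capacity-ineq}) then delivers a constant $C > 0$, depending only on $\overline{B}$ and $\alpha$, with $0 < C e^{-|r_{i,j}|} \le \| K_{i,j} \|$, which is incompatible with $\| K_{i,j} \| \to 0$.

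Given $\Phi = \Psi$, it remains to show $h = g$, for which we invoke measure theory on $(M, \nu_\alpha)$. From $(\phi_{H_i}^t)^* \alpha = e^{h_i^t} \alpha$ and $\nu_\alpha = \alpha \wedge (d\alpha)^{n-1}$ one gets $(\phi_{H_i}^t)^* \nu_\alpha = e^{n h_i^t} \nu_\alpha$, equivalently $(\phi_{H_i}^t)_* \nu_\alpha = e^{- n h_i^t \circ (\phi_{H_i}^t)^{-1}} \nu_\alpha$ as measures on $M$. Pairing with an arbitrary $u \in C^0(M)$ gives the identity $\int_M u \circ \phi_{H_i}^t \, \nu_\alpha = \int_M u \cdot e^{- n h_i^t \circ (\phi_{H_i}^t)^{-1}} \, \nu_\alpha$; letting $i \to \infty$ with $\phi_{H_i}^t \to \phi_t$ in $C^0$ and $h_i \to h$ uniformly produces $\int_M u \circ \phi_t \, \nu_\alpha = \int_M u \cdot e^{- n h^t \circ \phi_t^{-1}} \, \nu_\alpha$. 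The identical argument applied to the sequence $(\Phi_{F_j}, F_j, g_j)$ gives the same identity with $g^t$ in place of $h^t$. Since the two continuous densities integrate to the same value against every $u \in C^0(M)$, they coincide pointwise; composing with $\phi_t$ yields $h = g$.

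The principal obstacle is the reparametrization step in the energy-capacity argument: one must confirm that truncating the isotopy at $t_0$ and rescaling time does not worsen the uniform bound on the conformal factor while only decreasing the Hamiltonian norm, so that the constant in Theorem~\ref{thm:energy-capacity-ineq} remains effective in the limit. The measure-theoretic portion is then essentially a weak-convergence computation, relying on the $L^\oneinfty$-convergence of the Hamiltonians only indirectly through the uniform convergence of the isotopies and conformal factors.
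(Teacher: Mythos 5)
Your argument is correct, and it splits into two halves that compare differently with the paper. For the uniqueness of the isotopy, your route is essentially the paper's: the paper composes the approximating sequence with the fixed isotopy, reparameterizes via Lemma~\ref{lem:rep} to reduce to time one, and applies the contact energy-capacity inequality (Theorem~\ref{thm:energy-capacity-ineq}) to the displaced ball, with the uniform bound on the conformal factors keeping the constant effective (Proposition~\ref{pro:h-z-uniqueness}, Corollary~\ref{cor:unique-isotopy}); the only cosmetic difference is that you carry a double index $\Theta_{i,j}=\Phi_{H_i}^{-1}\circ\Phi_{F_j}$ where the paper first reduces the two-sequence statement to the case $H=0$ via the group structure (Lemma~\ref{lem:isotopy-equivalent} and Theorem~\ref{thm:topo-group}). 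For the uniqueness of the conformal factor, however, you genuinely diverge from the paper's proof of Proposition~\ref{pro:unique-conformal-factor}, which runs the energy-capacity inequality a second time in the symplectization, displacing a product set $\phi_H^s(B)\times[-\epsilon,\epsilon]$ by exploiting the vertical shift $\theta\mapsto\theta-(h_i-h)\circ(\phi_H^s)^{-1}$ created by a putative gap $|g-h|>2\epsilon$. Your replacement is a weak-convergence argument with the canonical volume: from $(\phi_{H_i}^t)^*\nu_\alpha=e^{nh_i^t}\nu_\alpha$ and the change-of-variables identity against arbitrary $u\in C^0(M)$ you identify the limiting density $e^{-nh^t\circ\phi_t^{-1}}$ and conclude equality of densities directly. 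This is precisely the mechanism the paper uses later, in section~\ref{sec:automorphisms}, to prove Theorem~\ref{thm:unique-topo-conformal-factor} and hence Corollary~\ref{cor:unique-topo-conformal-factor-iso} — indeed, once $\Phi=\Psi$ is known, that corollary already yields $h=g$. Your test-function version is even slightly cleaner than the paper's, which only gets a one-sided inequality from lower semicontinuity of measures on open sets and must reverse the roles of $h$ and $g$; the trade-off is that your argument uses the full two-sided convergence of both $\phi_{H_i}^t$ and their inverses, whereas the paper's displacement argument for Proposition~\ref{pro:unique-conformal-factor} does not even require the isotopies themselves to converge, only the Hamiltonians and conformal factors. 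Both halves of your proof stand.
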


In other words, a topological contact Hamiltonian $H$ uniquely determines the topological contact dynamical system $(\Phi, H, h) \in \TC (M,\alpha)$.
The proof is given in Section~\ref{sec:uniqueness-theorems}.
Examples showing that all of the convergence hypotheses in Definition~\ref{dfn:topo-contact-dynamics} and in the theorem are necessary are produced in Section~\ref{sec:examples}.
That the topological contact isotopy in turn also uniquely determines the topological contact dynamical system is proved in the sequel \cite{ms:tcd3}.

Given a topological contact Hamiltonian $H$, we denote the unique corresponding topological contact isotopy by $\Phi_H$, and the unique corresponding topological conformal factor by the lower case Roman letter $h$.
As in the smooth case, writing $\Phi = \Phi_H$ for a topological contact isotopy, and calling $h$ its topological conformal factor, or writing $(\Phi, H, h)$ for a topological contact dynamical system, involves the explicit or implicit selection of a contact form $\alpha$ with kernel $\xi$.
By Lemma~\ref{lem:alpha-indep}, if $e^f \alpha$ is another contact form on $(M,\xi)$, then the topological contact dynamical system $(\Phi, H, h)$ with respect to $\alpha$ is transformed to the topological contact dynamical system $(\Phi, e^f H, h + (f \circ \Phi - f))$ with respect to $e^f \alpha$.

By the above uniqueness theorem, given two topological contact Hamiltonians $H$ and $F$, we can define the functions $\Hbar \# F$ and $\holine \# f$ by
\begin{align}
	(\Hbar \# F)_t & = e^{- h_t} \cdot \left( (F_t - H_t) \circ \phi_H^t \right), \label{eqn:gp-str-ham} \\
	(\holine \# f)_t & = - h_t \circ (\phi^t_H)^{-1} \circ \phi^t_F + f_t, \label{eqn:gp-str-fctn}
\end{align}
where $\{ \phi_H^t \}$ and $\{ \phi_F^t \}$ are the unique topological contact isotopies corresponding to the topological contact Hamiltonians $H$ and $F$, respectively, and similarly, $h$ and $f$ are the corresponding unique topological conformal factors.
These operations extend the group structure in Lemmas~\ref{lem:conformal-calculus} and \ref{lem:ham-gp-str}.
A group structure on the space $\TC (M,\alpha)$ of topological contact dynamical systems $(\Phi, H, h)$ can then be defined by
\begin{align} \label{eqn:tcds-gp-str}
	(\Phi_H, H, h)^{-1} \circ (\Phi_F, F, f) = (\Phi_H^{-1} \circ \Phi_F, \Hbar \# F, \holine \# f).
\end{align}
Formulas~(\ref{eqn:gp-str-ham})--(\ref{eqn:tcds-gp-str}) determine the group operations completely, by inserting the identity $(\id,0,0)$ for the topological contact dynamical system $(\Phi_F, F, f)$, and then the inverse $(\Phi_H, H, h)^{-1}$ for $(\Phi_H, H, h)$.
In fact, this group structure on $\TC (M,\alpha)$ is well-defined even without the uniqueness theorem at hand, while defining the group structure on the space of topological contact Hamiltonians given by equation~(\ref{eqn:gp-str-ham}) does require Theorem~\ref{thm:uniqueness}.
As in the smooth case, equation~(\ref{eqn:gp-str-fctn}) by itself does not make sense, and is well-defined only if two topological contact Hamiltonians $H$ and $F$, or as we will see below, two topological contact isotopies, are given, and $h$ and $f$ are the corresponding topological conformal factors.

\begin{thm} \label{thm:topo-group}
The metric space $\TC (M,\alpha)$ of topological contact dynamical systems $(\Phi_H, H, h)$ of $(M,\alpha)$ forms a topological group with identity $(\id, 0, 0)$ under the operation $\circ$ defined in equation~(\ref{eqn:tcds-gp-str}).
The group $\C (M,\alpha)$ of smooth contact dynamical systems forms a topological subgroup.
\end{thm}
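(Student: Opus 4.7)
The plan is to show that the formula (\ref{eqn:tcds-gp-str}) extends the smooth group structure on $\C(M,\alpha)$ continuously to all of $\TC(M,\alpha)$, then deduce the group axioms by passing to the $d_\alpha$-limit from the smooth case, with the subgroup property being automatic.

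The key step is a continuity lemma for the smooth group operations: given two $d_\alpha$-Cauchy sequences of smooth contact dynamical systems $(\Phi_{H_i}, H_i, h_i)$ and $(\Phi_{F_i}, F_i, f_i)$, the composed smooth systems
\[
    (\Phi_{H_i}^{-1} \circ \Phi_{F_i},\ \overline{H}_i \# F_i,\ \overline{h}_i \# f_i)
\]
are themselves $d_\alpha$-Cauchy, and their limits coincide with the triples obtained by applying formulas (\ref{eqn:gp-str-ham})--(\ref{eqn:tcds-gp-str}) directly to the topological limits of the two given sequences. The isotopy component is handled by the continuity of composition and inversion in $\PHomeo(M)$ with respect to $\overline{d}_M$, as noted at the end of Section~\ref{sec:contact-geometry}. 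The conformal factor component is controlled in the maximum norm $|\cdot|$ via the explicit formula (\ref{eqn:gp-str-fctn}) together with uniform continuity of the limiting conformal factors on the compact manifold $M$. The Hamiltonian component is the most delicate: I would decompose the difference $(\overline{H}_i \# F_i)_t - (\overline{H}_j \# F_j)_t$ into a telescoping sum of expressions of the form $e^{-h}((F - H) \circ \phi)$ in which only one of the three ingredients $h$, $F - H$, or $\phi$ changes at a time. The Cauchy hypotheses together with Lemmas~\ref{lem:max-norm} and~\ref{lem:equivalent-norms} handle the terms in which $H_i$, $F_i$, or $h_i$ change, and the remaining composition-difference term is controlled by the uniform convergence of the $\phi_{H_i}$ and a dominated convergence argument in the time variable.

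With this Cauchy-sequence lemma in hand, well-definedness of the operation (\ref{eqn:tcds-gp-str}) on $\TC(M,\alpha)$ follows by interlacing any two approximating sequences, and joint continuity in $d_\alpha$ is then immediate by applying the lemma to the termwise difference of two approximating pairs. The group axioms transfer from their smooth counterparts in Lemma~\ref{lem:ham-gp-str}: associativity, the identity property of $(\id, 0, 0)$, and the inverse relation each hold for smooth triples and pass to the $d_\alpha$-limit by the continuity just established. Finally, $\C(M,\alpha) \subset \TC(M,\alpha)$ is already closed under the smooth group operations by Lemma~\ref{lem:ham-gp-str}, hence a subgroup, and its subspace topology from $d_\alpha$ coincides with its intrinsic topology by the definition of the contact distance in (\ref{eqn:contact-distance}).

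I expect the main obstacle to be the Hamiltonian composition-difference term. Because an $L^\oneinfty$-Hamiltonian $F - H$ is only continuous in the space variable for almost every time $t$, one cannot directly invoke uniform continuity in the joint variable $(t,x)$. The resolution is to apply uniform continuity of $(F - H)_t$ on the compact manifold $M$ for each fixed $t$ in a full-measure set to obtain pointwise convergence in $t$ of the composition $(F - H)_t \circ \phi_{H_i}^t$ to $(F - H)_t \circ \phi_H^t$, and then dominate by the integrable function $2 \max_x |(F - H)(t,x)|$ — which is finite by definition of the $\|\cdot\|$ norm — to invoke Lebesgue's dominated convergence theorem.
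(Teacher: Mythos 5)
Your proposal is correct and follows essentially the same route as the paper: the paper packages your continuity claim as Lemma~\ref{lem:cauchy}, proving that $e^{-h_i}(H_i\circ\Phi_i)$ is $\|\cdot\|$-Cauchy by exactly your telescoping decomposition (the conformal-factor term via Lemma~\ref{lem:max-norm}, the composition-difference term by citing Proposition~2.3.9 of \cite{mueller:ghh08}, whose proof is the dominated-convergence argument you spell out), and then deduces well-definedness, the group axioms, and continuity of the operations by passing to the $d_\alpha$-limit from Lemma~\ref{lem:ham-gp-str}. The only difference is that you unpack the cited proposition rather than quoting it.
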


We prove this theorem in Section~\ref{sec:topo-group}.
As an immediate corollary, we obtain

\begin{cor}
The group structure on $\TC (M,\alpha)$ induces via the projections group structures on the space $\PHomeo (M,\xi)$ of topological contact isotopies, on the space $L_\alpha^\oneinfty ([0,1] \times M)$ of topological contact Hamiltonians, and on the set of time-one maps of topological contact isotopies.
\end{cor}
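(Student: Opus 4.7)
The plan is to treat the corollary as a straightforward descent of the group structure on $\TC(M,\alpha)$ along the three natural projection maps
\[ \pi_\Phi \colon (\Phi, H, h) \mapsto \Phi, \qquad \pi_H \colon (\Phi, H, h) \mapsto H, \qquad \pi_1 \colon (\Phi, H, h) \mapsto \phi_H^1, \]
whose images are respectively $\PHomeo(M,\xi)$, $L^\oneinfty([0,1] \times M, \alpha)$, and the set of time-one maps of topological contact isotopies. For each projection, the strategy is to verify that the operation (\ref{eqn:tcds-gp-str}) descends to a well-defined binary operation on the image; once this is established, associativity, the identity $(\id, 0, 0)$, and existence of inverses pass automatically from Theorem~\ref{thm:topo-group} to the image, so the descended operation is automatically a group law.

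For $\pi_\Phi$ and $\pi_1$ the verification requires nothing beyond Theorem~\ref{thm:topo-group}, since the induced operations are visibly intrinsic: pointwise composition of continuous isotopies in the first case and ordinary composition of homeomorphisms in the second. Indeed, for any two triples $(\Phi_H, H, h), (\Phi_F, F, f) \in \TC(M,\alpha)$, the first component of the product computed via (\ref{eqn:tcds-gp-str}) is $\Phi_H^{-1} \circ \Phi_F$, a quantity that depends only on the isotopies $\Phi_H$ and $\Phi_F$ themselves and not on the representing Hamiltonians or conformal factors; the corresponding statement for time-one maps is identical. Hence the projected operations on $\PHomeo(M,\xi)$ and on the set of time-one maps are well-defined without any further input, and closure of each image under the induced operation and inversion is immediate from the group property of $\TC(M,\alpha)$.

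The only genuine obstacle is the projection $\pi_H$, because the formula (\ref{eqn:gp-str-ham}) for $\overline H \# F$ explicitly references the isotopy $\phi_H^t$ and the conformal factor $h_t$ attached to $H$. Well-definedness of this rule as an operation on topological contact Hamiltonians is therefore precisely the content of the uniqueness theorem, Theorem~\ref{thm:uniqueness}: the Hamiltonian $H$ alone determines the entire triple $(\Phi_H, H, h) \in \TC(M,\alpha)$, so $\pi_H$ is a set-theoretic bijection and the group operation transfers canonically. The crux of the proof, and really its only nontrivial step, is this single appeal to Theorem~\ref{thm:uniqueness}; without it the right-hand side of (\ref{eqn:gp-str-ham}) would depend on a choice of representing triple and the induced operation on $L^\oneinfty([0,1] \times M, \alpha)$ would fail to be well-defined.
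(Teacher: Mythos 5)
Your proposal is correct and follows essentially the same route as the paper: the induced operations on isotopies and time-one maps are just composition and descend trivially from Theorem~\ref{thm:topo-group}, while the operation on topological contact Hamiltonians via equation~(\ref{eqn:gp-str-ham}) is well-defined precisely because of Theorem~\ref{thm:uniqueness}, exactly as the paper points out in the discussion preceding Theorem~\ref{thm:topo-group}. No gaps.
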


In fact, in the case of isotopies and their time-one maps, the group structure is the usual one defined by composition of homeomorphisms of $M$, and one obtains topological subgroups of $\PHomeo (M)$ and $\Homeo (M)$, respectively.

\begin{dfn}[Contact homeomorphism]
A homeomorphism $\phi$ of $M$ is called a \emph{contact homeomorphism} if it is the time-one map of a topological contact isotopy.
The group of contact homeomorphisms is denoted by $\Homeo (M, \xi)$.
\end{dfn}

\begin{dfn}[Topological automorphism of the contact structure]
A homeomorphism $\phi$ of $M$ is a \emph{topological automorphism} of the contact structure $\xi$, if there exists a sequence of contact diffeomorphisms $\phi_i \in \Diff (M,\xi)$ with $\phi_i^* \alpha = e^{h_i} \alpha$, such that the sequence $\phi_i$ $C^0$-converges to the homeomorphism $\phi$, and the sequence of smooth conformal factors $h_i$ converges uniformly to a continuous function $h$.
The set of topological automorphisms is denoted by $\Aut (M,\xi)$, and the function $h \in C^0 (M)$ is called the \emph{topological conformal factor} of the automorphism $\phi$ with respect to the contact form $\alpha$.
\end{dfn}

\begin{thm}[Uniqueness of topological conformal factor of automorphism] \label{thm:unique-topo-conformal-factor}
The topological conformal factor of an automorphism $\phi \in \Aut (M,\xi)$ is uniquely determined by the homeomorphism $\phi$ and the contact form $\alpha$.
That is, suppose there exist two sequences $\phi_i$ and $\psi_i \in \Diff (M,\xi)$ with $\phi_i^* \alpha = e^{h_i} \alpha$ and $\psi_i^* \alpha = e^{g_i} \alpha$, such that both sequences $\phi_i$ and $\psi_i$ $C^0$-converge to the homeomorphism $\phi$, and the sequences $h_i$ and $g_i$ uniformly converge to continuous functions $h$ and $g$, respectively.
Then $h = g$.
\end{thm}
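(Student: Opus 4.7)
My plan is to avoid the contact energy-capacity inequality entirely and instead exploit the fact that the conformal factor of a contact diffeomorphism is detected by its action on the volume form $\nu_\alpha = \alpha \wedge (d\alpha)^{n-1}$. The key observation from Section~\ref{sec:contact-geometry} is that $\phi_i^*\alpha = e^{h_i}\alpha$ automatically implies $\phi_i^*\nu_\alpha = e^{n h_i}\nu_\alpha$, and similarly $\psi_i^*\nu_\alpha = e^{n g_i}\nu_\alpha$. First I would apply the standard change of variables formula, which for every continuous test function $f \in C^0(M)$ gives
\begin{equation*}
\int_M f \, e^{n h_i} \, \nu_\alpha \;=\; \int_M f \, \phi_i^*\nu_\alpha \;=\; \int_M (f \circ \phi_i^{-1}) \, \nu_\alpha,
\end{equation*}
together with the analogous identity for $\psi_i$ and $g_i$.

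Next I would pass to the limit $i \to \infty$ on both sides. On the left, uniform convergence $h_i \to h$ and continuity of $h$ give uniform convergence $e^{n h_i} \to e^{n h}$, hence $\int_M f \, e^{n h_i}\, \nu_\alpha \to \int_M f \, e^{n h}\, \nu_\alpha$. On the right, the $C^0$-convergence $\phi_i \to \phi$ in the complete metric $\dbar_M$ entails uniform convergence $\phi_i^{-1} \to \phi^{-1}$, and combined with continuity of $f$ on the compact manifold $M$ this gives $f \circ \phi_i^{-1} \to f \circ \phi^{-1}$ uniformly, so $\int_M (f \circ \phi_i^{-1}) \, \nu_\alpha \to \int_M (f \circ \phi^{-1}) \, \nu_\alpha$. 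The identical argument applied to the $\psi_i$ sequence produces the same limit on the right, because both sequences converge to the very same homeomorphism $\phi$. Combining the two limits I would obtain
\begin{equation*}
\int_M f \, e^{n h} \, \nu_\alpha \;=\; \int_M f \, e^{n g} \, \nu_\alpha \qquad \text{for every } f \in C^0(M).
\end{equation*}

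To conclude I would invoke that $\nu_\alpha$ is a nowhere vanishing volume form on the compact manifold $M$: testing against $f = e^{n h} - e^{n g}$, which is continuous, forces this continuous function to vanish pointwise, and hence $h = g$. The argument is largely routine once the appropriate change-of-variables identity is isolated, so I do not anticipate any substantial obstacle. The only delicate bookkeeping is the passage from equality of integrals against all continuous test functions to pointwise equality of the limiting densities, but this is immediate because both densities are continuous and $\nu_\alpha$ has full support. Notably, this proof uses only measure theory on the oriented manifold $M$, and in contrast to Theorem~\ref{thm:uniqueness}, makes no use of the contact energy-capacity inequality or symplectization; this illustrates the complementary role of the two techniques advertised in the abstract.
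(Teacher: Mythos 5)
Your proof is correct, and while it rests on the same basic mechanism as the paper's argument --- the conformal factor is detected by the induced volume form $\nu_\alpha$, and $C^0$-convergence of the diffeomorphisms controls the push-forward of the associated measure --- the execution is genuinely different and somewhat more elementary. The paper forms the compositions $\phi_i^{-1}\circ\psi_i$, which $C^0$-converge to the identity, and then invokes weak convergence of the push-forward measures together with lower semicontinuity of the evaluation of a (good) measure on an open set, citing Fathi; since semicontinuity yields only the one-sided inequality $\int_U e^{n(g-h)}\mu_\alpha \ge \int_U \mu_\alpha$, the paper must run the argument twice, exchanging the roles of $h$ and $g$, to conclude $g=h$. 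By testing the change-of-variables identity against arbitrary continuous functions $f$ rather than against open sets, you obtain genuine convergence of the integrals (no semicontinuity required) and hence an equality of integrals in a single pass, after which the choice $f = e^{nh}-e^{ng}$ finishes the argument. Two points are worth making explicit in a final write-up: first, $\phi_i$ is orientation-preserving because $\phi_i^*\nu_\alpha = e^{nh_i}\nu_\alpha$ with $e^{nh_i}>0$, which is needed for the sign in the change-of-variables formula; second, your last step actually produces $\int_M \bigl(e^{nh}-e^{ng}\bigr)^2\,\nu_\alpha = 0$, from which pointwise vanishing follows because the integrand is continuous and nonnegative and $\nu_\alpha$ is a positive volume form. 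Your closing observation is also accurate: like the paper's proof of this statement, yours uses only measure theory, the energy-capacity inequality being reserved for the uniqueness of the isotopy in Theorem~\ref{thm:uniqueness}.
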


Equivalently, if $\phi = \id$, then we must have $h = 0$.
That is, if the sequence $\phi_i$ of contact diffeomorphisms with $\phi_i^* \alpha = e^{h_i} \alpha$ $C^0$-converges to the identity, and the sequence $h_i$ converges uniformly to a continuous function $h$, then we must have $h = 0$.
See Section~\ref{sec:automorphisms} for the proofs.

\begin{cor}[Uniqueness of topological conformal factor of isotopy]  \label{cor:unique-topo-conformal-factor-iso}
The topological conformal factor of a topological contact isotopy $\Phi$ is uniquely determined by $\Phi$ and the contact form $\alpha$.
That is, if $(\Phi, H, h)$ and $(\Phi, F, f)$ are two topological contact dynamical systems with the same topological contact isotopy, then $h = f$.
\end{cor}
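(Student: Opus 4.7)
The plan is to reduce the corollary to the already-established uniqueness theorem for topological automorphisms, Theorem~\ref{thm:unique-topo-conformal-factor}, by slicing the isotopy at each fixed time $t \in [0,1]$.

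First I would unpack the definition. Since $(\Phi, H, h) \in \TC(M,\alpha)$, there exists a sequence $(\Phi_{H_i}, H_i, h_i)$ of smooth contact dynamical systems with $\Phi_{H_i} \xrightarrow{C^0} \Phi$ and $h_i \to h$ uniformly on $[0,1] \times M$; similarly, $(\Phi, F, f) \in \TC(M,\alpha)$ yields a sequence $(\Phi_{F_j}, F_j, f_j)$ with $\Phi_{F_j} \xrightarrow{C^0} \Phi$ and $f_j \to f$ uniformly on $[0,1] \times M$. Note that we are not asserting that $H = F$ as $L^\oneinfty$-functions, so the uniqueness Theorem~\ref{thm:uniqueness} cannot be applied directly; only the isotopy coincides.

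Next, fix any $t \in [0,1]$ and consider the time-$t$ slices. The uniform convergence $\dbar_M(\Phi_{H_i}, \Phi) \to 0$ implies $\phi_{H_i}^t \xrightarrow{C^0} \phi_t$ on $M$, and the uniform convergence on $[0,1] \times M$ of the conformal factors implies $h_i(t,\cdot) \to h(t,\cdot)$ uniformly on $M$. Since each $\phi_{H_i}^t \in \Diff(M,\xi)$ with $(\phi_{H_i}^t)^* \alpha = e^{h_i(t,\cdot)} \alpha$, the homeomorphism $\phi_t$ is a topological automorphism of $(M,\xi)$ with candidate topological conformal factor $h(t,\cdot)$. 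The same argument applied to $(\Phi_{F_j}, F_j, f_j)$ shows that $\phi_t$ also has $f(t,\cdot)$ as a candidate topological conformal factor. By Theorem~\ref{thm:unique-topo-conformal-factor}, which asserts that the topological conformal factor of an element of $\Aut(M,\xi)$ is uniquely determined by the homeomorphism and by $\alpha$, we conclude $h(t,\cdot) = f(t,\cdot)$ on $M$.

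Since $t \in [0,1]$ was arbitrary and both $h$ and $f$ are continuous on $[0,1] \times M$, this pointwise-in-$t$ equality yields $h = f$ as functions on $[0,1] \times M$, completing the proof. There is really no hard step here beyond invoking Theorem~\ref{thm:unique-topo-conformal-factor}; the only subtlety worth verifying is that uniform convergence on the product $[0,1] \times M$ passes to uniform convergence on $M$ at each fixed slice $t$, which is immediate.
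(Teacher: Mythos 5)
Your proof is correct and takes essentially the same route as the paper: slice the isotopy at each time $t$, observe that the time-$t$ map lies in $\Aut(M,\xi)$ with candidate topological conformal factor $h(t,\cdot)$ (respectively $f(t,\cdot)$), and invoke Theorem~\ref{thm:unique-topo-conformal-factor}. The paper states this in two sentences without spelling out the passage from uniform convergence on $[0,1] \times M$ to the time-$t$ slices; your write-up simply supplies that routine verification.
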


\begin{proof}
Each time-$t$ map $\phi_H^t$ is contained in $\Aut (M,\xi)$.
By Proposition~\ref{thm:unique-topo-conformal-factor}, for each $t$ the continuous function $h_t$ is uniquely determined by $\phi_H^t$.
\end{proof}

\begin{pro} \label{pro:gp-str-auto}
The set $\Aut (M,\xi)$ forms a subgroup of $\Homeo (M)$, and it contains as subgroups the groups $\Diff (M,\xi)$ and $\Homeo (M,\xi) \subseteq \Aut (M,\xi)$.
If $\phi$ and $\psi \in \Aut (M,\xi)$ are topological automorphisms with topological conformal factors $h$ and $g$, respectively, then the topological conformal factors of $\phi \circ \psi$ and $\phi^{-1}$ are $h \circ \psi + g$ and $-h \circ \phi^{-1}$, respectively.
\end{pro}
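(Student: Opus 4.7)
The plan is to verify each claim by lifting the operation on $\Aut(M,\xi)$ to operations on approximating sequences of contact diffeomorphisms, applying the smooth composition and inversion formulas from Lemma~\ref{lem:conformal-calculus} term by term, and then passing to the limit. The uniqueness statement Theorem~\ref{thm:unique-topo-conformal-factor} will be invoked at the end of each case to identify the resulting topological conformal factor.

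The two inclusions are essentially definitional. For $\Diff(M,\xi) \subseteq \Aut(M,\xi)$ I take the constant sequence $\phi_i = \phi$, whose smooth conformal factors are also constant and therefore converge uniformly. For $\Homeo(M,\xi) \subseteq \Aut(M,\xi)$, if $\phi$ is the time-one map of a topological contact isotopy $\Phi$ coming from smooth contact isotopies $\Phi_{H_i}$ with conformal factors $h_i$ uniformly convergent to $h$, then the time-one maps $\phi_{H_i}^1 \in \Diff(M,\xi)$ $C^0$-converge to $\phi$, while $(\phi_{H_i}^1)^*\alpha = e^{h_i(1,\cdot)}\alpha$ and $h_i(1,\cdot) \to h(1,\cdot)$ uniformly on $M$.

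For closure under composition, given $\phi,\psi \in \Aut(M,\xi)$ with approximating sequences $\phi_i, \psi_i \in \Diff(M,\xi)$ whose smooth conformal factors $h_i, g_i$ converge uniformly to $h, g$, Lemma~\ref{lem:conformal-calculus} yields $(\phi_i \circ \psi_i)^*\alpha = e^{h_i \circ \psi_i + g_i}\alpha$, so $\phi_i \circ \psi_i \in \Diff(M,\xi)$. Since composition is continuous in the $C^0$-metric $\dbar_M$ (as recalled in section~\ref{sec:contact-geometry}), $\phi_i \circ \psi_i \to \phi \circ \psi$ uniformly. The triangle inequality
\[ | h_i \circ \psi_i - h \circ \psi | \le | h_i - h | + | h \circ \psi_i - h \circ \psi |, \]
combined with uniform continuity of the continuous function $h$ on the closed manifold $M$ applied to the uniformly convergent sequence $\psi_i \to \psi$, shows that $h_i \circ \psi_i + g_i \to h \circ \psi + g$ uniformly. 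Hence $\phi \circ \psi \in \Aut(M,\xi)$, and by Theorem~\ref{thm:unique-topo-conformal-factor} the topological conformal factor is precisely $h \circ \psi + g$.

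For inverses, $\phi_i^{-1} \in \Diff(M,\xi)$ satisfies $(\phi_i^{-1})^*\alpha = e^{-h_i \circ \phi_i^{-1}}\alpha$ by Lemma~\ref{lem:conformal-calculus}. The remark in section~\ref{sec:contact-geometry} that a $d_M$-convergent sequence with homeomorphism limit is automatically $\dbar_M$-Cauchy guarantees $\phi_i^{-1} \to \phi^{-1}$ uniformly. The same kind of two-term triangle estimate together with uniform continuity of $h$ gives $-h_i \circ \phi_i^{-1} \to -h \circ \phi^{-1}$ uniformly, so $\phi^{-1} \in \Aut(M,\xi)$ with topological conformal factor $-h \circ \phi^{-1}$, again identified via Theorem~\ref{thm:unique-topo-conformal-factor}. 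The only mildly nontrivial step is the last one in each case—ensuring that the two kinds of uniform convergence (of the homeomorphisms and of the conformal factors) combine correctly—but on the compact manifold $M$ this is routine from uniform continuity, so there is no real obstacle beyond what is already embedded in the uniqueness theorem quoted from section~\ref{sec:automorphisms}.
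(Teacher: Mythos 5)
Your proof is correct and follows exactly the route the paper intends: the paper dismisses this proposition as ``obvious from the definitions,'' pointing only to Lemma~\ref{lem:conformal-calculus} for the composition and inversion formulas, and your argument is simply the careful writing-out of that claim (approximating sequences, the smooth formulas, uniform continuity of $h$ on the compact manifold $M$ to pass to the limit, and Theorem~\ref{thm:unique-topo-conformal-factor} to identify the limiting conformal factor). Nothing is missing.
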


The proof is obvious from the definitions.
See Lemma~\ref{lem:conformal-calculus} for the last part.
Recall that if $\alpha$ and $e^f \alpha$ are two contact forms on $(M,\xi)$ and $\phi^* \alpha = e^h \alpha$, then $\phi^* (e^f \alpha) = e^{h + (f \circ \phi - f)} (e^f \alpha)$.

\begin{pro} \label{pro:indep-auto-gp}
The automorphism group $\Aut (M,\xi)$ does not depend on the choice of contact form $\alpha$.
More precisely, suppose $\alpha$ is a contact form with $\ker \alpha = \xi$, and there exists a sequence of contact diffeomorphisms $\phi_i$ with $\phi_i^* \alpha = e^{h_i} \alpha$, such that the sequence $\phi_i$ $C^0$-converges to a homeomorphism $\phi$, and the sequence of conformal factors $h_i$ converges uniformly to a continuous function $h$.
Further suppose that $e^f \alpha$ is any other contact form on $(M,\xi)$.
Then $\phi$ is also a topological automorphism with respect to the contact form $e^f \alpha$, with topological conformal factor $h + (f \circ \phi - f)$, i.e.\ the conformal factors $h_i + (f \circ \phi_i - f)$ converge to the continuous function $h + (f \circ \phi - f)$ uniformly.
\end{pro}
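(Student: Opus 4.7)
The plan is to verify directly that the conformal factors of the $\phi_i$ with respect to the contact form $\alpha' = e^f \alpha$ converge uniformly to the function $h + (f \circ \phi - f)$. The definition of a topological automorphism then yields the conclusion, and uniqueness (Theorem~\ref{thm:unique-topo-conformal-factor}) guarantees that the limit is the topological conformal factor.

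First I would apply Lemma~\ref{lem:conformal-calculus} (or a direct calculation) to the relation $\phi_i^* \alpha = e^{h_i} \alpha$ to obtain
\[
	\phi_i^* \alpha' \;=\; \phi_i^* (e^f \alpha) \;=\; (e^f \circ \phi_i) \, \phi_i^* \alpha \;=\; e^{\, h_i + (f \circ \phi_i - f)} \, \alpha'.
\]
Thus the conformal factor of $\phi_i$ with respect to $\alpha'$ is $h_i' := h_i + (f \circ \phi_i - f)$, which is precisely the analog of the smooth transformation law used in Lemma~\ref{lem:change-contact-form}.

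Next I would verify uniform convergence $h_i' \to h + (f \circ \phi - f)$. Writing
\[
	h_i' - \bigl(h + (f \circ \phi - f)\bigr) \;=\; (h_i - h) \,+\, (f \circ \phi_i - f \circ \phi),
\]
the first summand tends to zero uniformly by hypothesis, and the second summand tends to zero uniformly because $M$ is compact (so $f$ is uniformly continuous) and $\phi_i \to \phi$ in $C^0$. By definition, this exhibits $\phi$ as a topological automorphism with respect to $\alpha'$, approximated by the very same sequence $\phi_i$, and Theorem~\ref{thm:unique-topo-conformal-factor} forces the topological conformal factor with respect to $\alpha'$ to equal $h + (f \circ \phi - f)$.

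There is no real obstacle here; the argument is a topological mirror of Lemma~\ref{lem:change-contact-form}. The only point that requires a moment of care is the uniform convergence of $f \circ \phi_i$ to $f \circ \phi$, which uses compactness of $M$ (and the fact that, by our standing assumption in section~\ref{sec:contact-geometry}, $M$ is closed; for open manifolds one would instead need a local version on compact subsets, as alluded to in the remarks at the end of section~\ref{sec:topo-contact-dynamics}).
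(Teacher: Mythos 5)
Your proof is correct and is exactly the ``immediate'' argument the paper has in mind (the paper just states the proof is immediate and omits it): pull back $\alpha' = e^f\alpha$ by $\phi_i$ to read off the transformed conformal factor $h_i + (f\circ\phi_i - f)$, then use uniform continuity of $f$ on the compact manifold $M$ together with $\phi_i\to\phi$ in $C^0$ and $h_i\to h$ uniformly to conclude. The appeal to Theorem~\ref{thm:unique-topo-conformal-factor} at the end is not strictly needed---the definition of topological automorphism and topological conformal factor only asks for the existence of one such approximating sequence---but it does no harm and correctly notes that the limit is then \emph{the} conformal factor.
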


The proof of the proposition is again immediate.

\begin{thm}[Transformation law] \label{thm:transformation-law}
Let $(\Phi_H, H, h)$ be a topological contact dynamical system, and $\phi \in \Aut (M,\xi)$ be a topological automorphism of the contact structure $\xi$, with topological conformal factor $g$.
Then
\begin{align} \label{eqn:transformation-law}
	(\phi^{-1} \circ \Phi_H \circ \phi, e^{- g} (H \circ \phi), h \circ \phi + g - g \circ \phi^{-1} \circ \Phi_H \circ \phi)
\end{align}
is a topological contact dynamical system.
\end{thm}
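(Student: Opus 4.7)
The plan is to approximate and diagonalize, combining the smooth conjugation identity of Lemma~\ref{lem:ham-gp-str} with the convergence properties built into Definition~\ref{dfn:topo-contact-dynamics} and the definition of $\Aut (M,\xi)$. By Definition~\ref{dfn:topo-contact-dynamics}, pick smooth contact dynamical systems $(\Phi_{H_i}, H_i, h_i) \in \C (M,\alpha)$ with $\dbar_M(\Phi_{H_i}, \Phi_H) \to 0$, $\|H_i - H\| \to 0$, and $|h_i - h| \to 0$; by definition of $\Aut (M,\xi)$, pick contact diffeomorphisms $\phi_j \in \Diff(M,\xi)$ with $\phi_j^*\alpha = e^{g_j}\alpha$, $\dbar_M(\phi_j, \phi) \to 0$, and $|g_j - g| \to 0$. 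Since all limits are continuous on the compact manifold $M$, the sequences $g_j$ and $h_i$ are uniformly bounded by a constant $c_0 < \infty$. For each pair $(i, j)$, Lemma~\ref{lem:ham-gp-str} guarantees that
\[
    \bigl( \phi_j^{-1} \circ \Phi_{H_i} \circ \phi_j, \ e^{-g_j}(H_i \circ \phi_j), \ h_i \circ \phi_j + g_j - g_j \circ \phi_j^{-1} \circ \Phi_{H_i} \circ \phi_j \bigr)
\]
is a smooth contact dynamical system in $\C(M,\alpha)$, and a diagonal choice $j = j(i) \to \infty$ reduces the theorem to three convergence statements.

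The convergence of the isotopies $\phi_j^{-1} \circ \Phi_{H_i} \circ \phi_j \to \phi^{-1} \circ \Phi_H \circ \phi$ in $\dbar_M$ is immediate, since composition and inversion are continuous in the $C^0$ topology. Convergence of the conformal-factor component is verified term by term using uniform continuity of the continuous limits $h$ and $g$ on the compact manifold $M$, uniform convergence of $\phi_j$, $\phi_j^{-1}$, and $\Phi_{H_i}$, together with $|h_i - h| \to 0$ and $|g_j - g| \to 0$.

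The heart of the argument is the Hamiltonian convergence $\|e^{-g_j}(H_i \circ \phi_j) - e^{-g}(H \circ \phi)\| \to 0$. We decompose the difference as
\[
    e^{-g_j}\bigl((H_i - H) \circ \phi_j\bigr) \; + \; (e^{-g_j} - e^{-g})(H \circ \phi_j) \; + \; e^{-g}(H \circ \phi_j - H \circ \phi).
\]
Precomposition by the diffeomorphism $\phi_j$ preserves the oscillation part of $\|\cdot\|$ exactly and alters the mean value only by the Jacobian $e^{-n g_j \circ \phi_j^{-1}}$, which is bounded uniformly by $e^{nc_0}$; combined with Lemma~\ref{lem:max-norm}, this yields $\|K \circ \phi_j\| \leq C_0 \|K\|$ for a constant $C_0 = C_0(c_0)$. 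Lemma~\ref{lem:equivalent-norms} then controls multiplication by $e^{-g_j}$ uniformly in $j$, so the first term is at most $C \|H_i - H\|$ and tends to zero. The second term is dominated by $\|e^{-g_j} - e^{-g}\|_\infty \int_0^1 \max_{x \in M} |H(t,x)|\, dt$, which tends to zero since $H$ is $L^{(1,\infty)}$ and $g_j \to g$ uniformly. The third term is handled by the dominated convergence theorem: for almost every $t$, the function $H_t$ is continuous in $x$ and $\phi_j \to \phi$ uniformly, so $H_t \circ \phi_j \to H_t \circ \phi$ uniformly in $x$, with the convergence dominated in $t$ by the $L^1$ function $\max_{x \in M} |H(t,x)|$.

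The main obstacle is precisely this Hamiltonian convergence: one must simultaneously control precomposition by diffeomorphisms that need not preserve the volume form $\nu_\alpha$ and multiplicative rescaling by $e^{-g_j}$. Both effects are tamed by the uniform bound $c_0$ on $g_j$, which is free from the uniform-convergence hypothesis on conformal factors built into the definitions of $\TC(M,\alpha)$ and $\Aut (M,\xi)$; this is precisely where the rigidity supplied by Theorem~\ref{thm:unique-topo-conformal-factor} enters the picture.
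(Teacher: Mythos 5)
Your proof is correct and follows essentially the same route as the paper: approximate $(\Phi_H,H,h)$ and $\phi$ by smooth contact dynamical systems and contact diffeomorphisms, conjugate in the smooth category using Lemma~\ref{lem:ham-gp-str} (together with the conformal-factor computation from Lemma~\ref{lem:conformal-calculus}), and pass to the limit in the contact metric. The only structural difference is in the Hamiltonian convergence: the paper's proof simply invokes Lemma~\ref{lem:cauchy} applied to the constant-in-time isotopies $\phi_i$, whereas you re-derive the needed special case directly via your three-term decomposition, handling the precomposition (oscillation unchanged, mean value controlled by the Jacobian bound $e^{nc_0}$ and Lemma~\ref{lem:max-norm}), the multiplicative factor (Lemma~\ref{lem:equivalent-norms} with the uniform bound on $|g_j|$), and the shift of argument (dominated convergence in $t$). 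This is a sound, self-contained alternative to citing the lemma; the diagonalization over $j = j(i)$ is just a cosmetic variant of re-indexing both sequences by $i$ as the paper does.

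One small inaccuracy in your closing remark: the uniform bound $c_0$ on $|g_j|$ is not a consequence of Theorem~\ref{thm:unique-topo-conformal-factor}. It comes directly from the uniform convergence $g_j \to g$ with $g$ continuous on the compact manifold $M$, which is built into the definition of $\Aut(M,\xi)$. Theorem~\ref{thm:unique-topo-conformal-factor} is needed so that ``the topological conformal factor $g$ of $\phi$'' is a well-defined notion in the statement, but it plays no role in the convergence estimate itself — and indeed the paper's proof of the transformation law does not invoke it.
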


See Section~\ref{sec:topo-group} for the proof.
Recall that with the same notation as in the rest of the paper, the conformal factor in the theorem is the continuous function
	\[ (h \circ \phi + g - g \circ \phi^{-1} \circ \Phi_H \circ \phi) (t,x) = h (t,\phi (x)) + g (x) - g (\phi^{-1} (\phi_H^t (\phi (x)))) \]
on $[0,1] \times M$, and similarly for the topological contact Hamiltonian $e^{- g} (H \circ \phi)$.

\begin{cor}[Normality] \label{cor:normal}
The group of contact homeomorphisms is a normal subgroup of the topological automorphism group of the contact structure,
	\[ \Homeo(M,\xi) \trianglelefteq \Aut(M,\xi) \subseteq \Homeo (M). \]
\end{cor}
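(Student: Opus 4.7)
The plan is to deduce the corollary essentially as a direct consequence of the transformation law (Theorem~\ref{thm:transformation-law}) together with the definitions of $\Homeo(M,\xi)$ and $\Aut(M,\xi)$. Two things must be checked: (i) every contact homeomorphism is a topological automorphism of the contact structure, and (ii) $\Homeo(M,\xi)$ is closed under conjugation by elements of $\Aut(M,\xi)$. Normality then follows once we know it is a subgroup, which itself follows from Theorem~\ref{thm:topo-group} (the group operations on $\TC(M,\alpha)$ project to group operations on the set of time-one maps of topological contact isotopies).

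For (i), let $\psi \in \Homeo(M,\xi)$, so $\psi = \phi_H^1$ for some topological contact dynamical system $(\Phi_H, H, h) \in \TC(M,\alpha)$. By Definition~\ref{dfn:topo-contact-dynamics}, there is a sequence $(\Phi_{H_i}, H_i, h_i)$ of smooth contact dynamical systems such that $\Phi_{H_i} \to \Phi_H$ in $C^0$ and $h_i \to h$ uniformly on $[0,1] \times M$. Specializing to time $t = 1$, the contact diffeomorphisms $\phi_i := \phi_{H_i}^1$ satisfy $\phi_i^* \alpha = e^{h_i(1,\cdot)} \alpha$, $\phi_i \to \psi$ in $C^0$, and $h_i(1,\cdot) \to h(1,\cdot)$ uniformly. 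Hence $\psi \in \Aut(M,\xi)$ with topological conformal factor $h(1,\cdot)$.

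For (ii), let $\phi \in \Aut(M,\xi)$ with topological conformal factor $g$, and let $\psi \in \Homeo(M,\xi)$ be the time-one map of $(\Phi_H, H, h) \in \TC(M,\alpha)$. By the transformation law,
\[
	\bigl(\phi^{-1} \circ \Phi_H \circ \phi,\ e^{-g}(H \circ \phi),\ h \circ \phi + g - g \circ \phi^{-1} \circ \Phi_H \circ \phi\bigr)
\]
is again a topological contact dynamical system. Its time-one map is $\phi^{-1} \circ \phi_H^1 \circ \phi = \phi^{-1} \circ \psi \circ \phi$, which is therefore a contact homeomorphism. This yields $\phi^{-1} \circ \Homeo(M,\xi) \circ \phi \subseteq \Homeo(M,\xi)$, i.e., normality.

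I do not expect any real obstacle here: once Theorem~\ref{thm:transformation-law} and Theorem~\ref{thm:topo-group} are in hand, the entire argument is a two-line verification. The only minor point worth explicitly remarking on is that the subgroup property of $\Homeo(M,\xi) \subseteq \Homeo(M)$ is exactly the content of the corollary to Theorem~\ref{thm:topo-group}, so no additional work is needed beyond citing it.
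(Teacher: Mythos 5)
Your proposal is correct and follows exactly the route the paper intends: the paper states this as a corollary immediately after the transformation law and gives no explicit proof, because (as you observe) normality under conjugation is immediate from Theorem~\ref{thm:transformation-law}, the inclusion $\Homeo(M,\xi) \subseteq \Aut(M,\xi)$ is already recorded in Proposition~\ref{pro:gp-str-auto}, and the subgroup property of $\Homeo(M,\xi)$ in $\Homeo(M)$ comes from Theorem~\ref{thm:topo-group} and its corollary. The only slight redundancy is that your step (i) re-derives a containment the paper has already established in Proposition~\ref{pro:gp-str-auto}, but the verification you give is exactly the paper's ``obvious from the definitions'' argument spelled out.
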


\begin{pro}[Path-connectedness]
Let $\Phi = \{ \phi_t \}$ be a topological contact isotopy.
Then each time-$t$ map $\phi_t$ is a contact homeomorphism.
In particular, $\Homeo (M,\xi)$ is path-connected in the $C^0$-topology.
\end{pro}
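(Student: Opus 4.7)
The plan is to prove this by a time-reparameterization argument applied to the approximating sequence. Fix $t_0 \in [0,1]$ and consider the reparameterized isotopy $\Psi = \{ \psi_s \}_{0 \le s \le 1}$ defined by $\psi_s = \phi_{s t_0}$, whose time-one map is $\phi_{t_0}$. I want to exhibit $\Psi$ as a topological contact isotopy.

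First, I would unwind the smooth picture. Suppose $(\Phi_H, H, h)$ is the topological contact dynamical system in question, approximated in the contact metric $d_\alpha$ by smooth systems $(\Phi_{H_i}, H_i, h_i)$. For each $i$, define the reparameterized smooth isotopy $\Psi_i = \{ \phi_{H_i}^{s t_0} \}$. A direct computation with the defining ODE shows $\Psi_i$ is generated by the smooth contact Hamiltonian $H_i'(s,x) = t_0 H_i(s t_0, x)$ with conformal factor $h_i'(s,x) = h_i(s t_0, x)$, so $(\Psi_i, H_i', h_i')$ is a smooth contact dynamical system. Set $H'(s,x) = t_0 H(s t_0, x)$ and $h'(s,x) = h(s t_0, x)$, and let $\Psi$ be the reparameterization of $\Phi_H$ above.

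The main step is to verify the three convergences of Definition~\ref{dfn:topo-contact-dynamics} for $(\Psi_i, H_i', h_i') \to (\Psi, H', h')$. The $C^0$-convergence $\dbar_M(\Psi_i,\Psi) \to 0$ is immediate, since uniform convergence of $\Phi_{H_i}$ to $\Phi_H$ on $[0,1]$ restricts to uniform convergence on $[0,t_0]$, hence yields uniform convergence of the reparameterizations on $[0,1]$; the same holds for inverses. For the Hamiltonian norm, a change of variables $u = s t_0$ gives
\[
\| H' - H_i' \| = \int_0^1 \bigl( \osc(t_0(H - H_i)(s t_0,\cdot)) + |c(t_0(H - H_i)(s t_0,\cdot))| \bigr) \, ds = \int_0^{t_0} \bigl( \osc((H - H_i)(u,\cdot)) + |c((H - H_i)(u,\cdot))| \bigr) du \le \| H - H_i \| \to 0.
\]
For the conformal factors, $|h' - h_i'| = \max_{s,x} |h(s t_0, x) - h_i(s t_0, x)| \le |h - h_i| \to 0$. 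Thus $(\Psi, H', h') \in \TC(M,\alpha)$, which means $\phi_{t_0} = \psi_1$ is a contact homeomorphism.

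For the path-connectedness statement, let $\phi \in \Homeo(M,\xi)$ be arbitrary. By definition $\phi = \phi_H^1$ for some topological contact isotopy $\Phi_H = \{\phi_H^t\}$, and by the first part each $\phi_H^t \in \Homeo(M,\xi)$. Since $\Phi_H \in \PHomeo(M)$, the map $t \mapsto \phi_H^t$ is continuous in the $C^0$-topology, furnishing a continuous path in $\Homeo(M,\xi)$ from $\id$ to $\phi$. The main obstacle is purely bookkeeping, namely carrying out the change-of-variable estimate on the Hamiltonian norm $\|\cdot\|$ defined in (\ref{eqn:contact-length}); once one verifies that the $\osc$ and $|c(\cdot)|$ contributions both satisfy a monotonicity under restriction of the time interval, the rest is automatic.
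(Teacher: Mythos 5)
Your proposal is correct and takes essentially the same approach as the paper, which derives the proposition directly from Lemma~\ref{lem:rep} (the reparameterization $t \mapsto st$ with $H^s(t,x)=sH(st,x)$, $h^s(t,x)=h(st,x)$). The paper states that lemma without proof in the topological case; your change-of-variables estimate $\|H'-H_i'\| \le \|H-H_i\|$ and the accompanying $C^0$ and conformal-factor bounds are precisely the omitted verification, and the path-connectedness conclusion then follows just as you say.
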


The proof of the proposition is a consequence of Lemma~\ref{lem:rep} in the next section.
Other topological properties of $\Homeo (M,\xi)$ are studied in the sequel \cite{ms:tcd2}.
The analogous theorems in the Hamiltonian and strictly contact case are stated and proved in \cite{mueller:ghh07, banyaga:ugh11}.

Using $C^{1,1}$-functions instead of smooth contact Hamiltonians in Definition~\ref{dfn:topo-contact-dynamics} leads to the same notion of topological contact dynamics, and the proof is almost the same as in the case of Hamiltonian dynamical systems in \cite{mueller:ghh07}.
Recall that a time-dependent continuous vector field $X$ is uniquely integrable, provided $X (t,\cdot)$ is (locally) Lipschitz  independent of time $t \in [0,1]$.

\begin{thm}
Suppose $H \colon [0,1] \times M \to \R$ is a continuous function that is continuously differentiable in the variable $x \in M$, the one-form $dH$ is continuous in $t$, and the time-dependent vector field $X_H$ is uniquely integrable.
Denote by $\Phi_H$ the continuous isotopy generated by $X_H$, and by $h \colon [0,1] \times M \to \R$ the continuous function defined by equation~(\ref{eqn:formula-conformal-factor}).
Then $H$ is a topological contact Hamiltonian with topological contact isotopy $\Phi_H$ and topological conformal factor $h$.
\end{thm}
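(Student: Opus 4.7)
The plan is to approximate the $C^1$-function $H$ by a sequence of smooth contact Hamiltonians $H_i$ built by standard mollification, and then verify the three convergences demanded by Definition~\ref{dfn:topo-contact-dynamics}. Choosing a partition of unity subordinate to coordinate charts on $M$ and convolving with mollifiers in both space and time, I obtain smooth functions $H_i \colon [0,1] \times M \to \R$ with $H_i \to H$ in the $C^1$-norm on the compact manifold $[0,1] \times M$. In particular $H_i$, $dH_i$, and $R \cdot H_i$ converge uniformly to $H$, $dH$, and $R \cdot H$, respectively; this is possible precisely because $H$ is assumed to be $C^1$.

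Since the contact vector field $X_{H_i}$ is determined from $H_i$ by the equations~(\ref{eqn:contact-ham}), which are linear in $H_i$, $dH_i$, and $R \cdot H_i$ with coefficients depending algebraically only on $\alpha$, $d\alpha$, and the Reeb vector field, the uniform $C^1$-convergence of the $H_i$ produces uniform convergence $X_{H_i} \to X_H$ on $[0,1] \times M$. For the flows I would invoke a classical continuous dependence theorem for ordinary differential equations: if smooth vector fields $X_i$ converge uniformly to a continuous vector field $X$ that is uniquely integrable, then the flows of $X_i$ converge uniformly on compact sets to the continuous flow of $X$. Applied here, this yields $\phi_{H_i}^t \to \phi_H^t$ uniformly in $(t,x) \in [0,1] \times M$. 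The formula~(\ref{eqn:formula-conformal-factor}) then gives $h_i \to h$ uniformly on $[0,1] \times M$ by dominated convergence, since $R \cdot H_{i,s} \to R \cdot H_s$ uniformly and $\phi_{H_i}^s \to \phi_H^s$ uniformly. Finally $\|H_i - H\| \to 0$ is immediate, since by Lemma~\ref{lem:max-norm} the norm $\|\cdot\|$ is dominated by $3$ times the supremum norm on $[0,1] \times M$.

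The main obstacle is the uniform flow convergence step, because $X_H$ need only be continuous and standard Gronwall arguments are therefore unavailable. The standard workaround relies on an Arzel\`a--Ascoli argument: the uniform bound on the $X_{H_i}$ makes the trajectories $t \mapsto \phi_{H_i}^t(x)$ equi-Lipschitz in $t$, and together with compactness of $M$ this forces every subsequence to admit a further subsubsequence converging uniformly on $[0,1] \times M$. Any such subsequential limit satisfies the integral equation $\phi^t(x) = x + \int_0^t X_H(s,\phi^s(x))\, ds$ in local coordinates, and by the unique integrability hypothesis on $X_H$ must coincide with $\phi_H^t$; this uniqueness of the subsequential limit forces the full sequence $\phi_{H_i}^t$ to converge uniformly to $\phi_H^t$. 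This is precisely the strategy used for the Hamiltonian case in~\cite{mueller:ghh07}, transplanted to the contact setting via the symplectization dictionary developed in sections~\ref{sec:contact-geometry}--\ref{sec:symplectization}.
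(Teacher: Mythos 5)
Your argument follows the paper's proof step for step: approximate $H$ in $C^1$ by smooth $H_i$, deduce uniform convergence $X_{H_i} \to X_H$ from the linearity of equations~(\ref{eqn:contact-ham}), invoke the standard ODE continuity theorem (fleshed out here via Arzel\`a--Ascoli and the unique-integrability hypothesis) to get $\Phi_{H_i} \to \Phi_H$ uniformly, and obtain $h_i \to h$ from equation~(\ref{eqn:formula-conformal-factor}). The paper states the flow-convergence step as a single citation of ``the standard continuity theorem''; your elaboration of that step is exactly what that theorem's proof contains, so the two arguments coincide.
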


\begin{proof}
The given function $H$ can be approximated by a sequence of smooth contact Hamiltonians $H_i \colon [0,1] \times M \to \R$ such that $H_i^t \to H$ and $dH_i^t \to dH_t$ uniformly in $x \in M$ and $t \in [0,1]$.
Thus $\| H - H_i \|_\alpha \to 0$ as $i \to \infty$, and the Lipschitz vector fields $X_{H_i}$ converge to $X_H$ uniformly over $t \in [0,1]$ and $x \in M$, cf.\ equation~(\ref{eqn:contact-ham}).
Therefore the flows $\Phi_{H_i}$ converge uniformly to $\Phi_H$ by the standard continuity theorem in the theory of ordinary differential equations, and thus also in the $C^0$-metric.
In particular $h_i \to h$ uniformly over $t \in [0,1]$ and $x \in M$ by equation~(\ref{eqn:formula-conformal-factor}).
Thus $(\Phi_{H_i}, H_i, h_i)$ converges to $(\Phi_H, H, h)$ in the contact metric $d_\alpha$.
\end{proof}

We note that the proof does not invoke the Uniqueness Theorem~\ref{thm:uniqueness}.

\begin{dfn}[Admissible topological Hamiltonian dynamical system]
Let $(M,\xi)$ be a contact manifold, and $\alpha$ a contact form with $\ker \alpha = \xi$.
Denote by $W = M \times \R$ the corresponding symplectization with symplectic form $\omega = - d (e^\theta \pi_1^* \alpha)$.
A pair $(\Phihat, \wH)$ is an \emph{admissible topological Hamiltonian dynamical system} of $(W,\omega)$ if there exists a sequence of smooth admissible Hamiltonian isotopies $\Phi_{\wH_i}$ that $C^0$-converges to the continuous isotopy $\Phihat = \{ \phihat_t \}$ of $W$, and $\| \wH - \wH_i \|_\alpha^{a,b} \to 0$ for a (and thus any) $K_{a,b} = M \times [a,b] \subset M \times \R$.
The function $\wH \colon [0,1] \times W \to \R$ is called an \emph{admissible topological Hamiltonian} with \emph{admissible topological Hamiltonian isotopy} $\Phihat$.
\end{dfn}

Given a time-dependent function $H$, a continuous isotopy of homeomorphisms $\Phi$, and a time-dependent continuous function $h$, define as in Section~\ref{sec:symplectization} the function $\wH (t,x,\theta) = e^\theta H (t,x)$ and the isotopy $\Phihat = \{ \phihat_t \}$ on $[0,1] \times M \times \R$ by $\phihat_t (x,\theta) = (\phi_t (x), \theta - h_t (x))$.
By construction, $(\Phi, H, h)$ is a topological contact dynamical system with respect to the contact form $\alpha$, if and only if $(\Phihat, \wH)$ is an admissible topological Hamiltonian dynamical system of the symplectization of $(M,\alpha)$.
Thus all of the definitions and results in topological contact dynamics have analogs in admissible topological Hamiltonian dynamics of symplectizations, and the proofs are verbatim the same.

A \emph{topological strictly contact dynamical system} $(\Phi, H, 0)$ is by definition the limit of a $d_\alpha$-convergent sequence of smooth strictly contact dynamical systems $(\Phi_i, H_i, 0)$ \cite{banyaga:ugh11}.
Topological strictly contact dynamical systems form a topological subgroup of the group of topological contact dynamical systems.
The constructions in this article generalize those in \cite{banyaga:ugh11} in the strictly contact case, taking into account the added complications of non-trivial conformal factors in several places in our definitions and proofs.

As in the Hamiltonian case in \cite{mueller:ghh07} or Chapter~3 in \cite{mueller:ghh08}, it is straightforward to define \emph{compactly supported topological dynamical systems} of open contact manifolds $(M,\xi)$, provided only that $\xi$ is coorientable.
If $M$ is open, one restricts to homeomorphisms, isotopies, and functions on $[0,1] \times M$ that are compactly supported in the interior of $M$, or in other words, have compact support and are trivial near the boundary of $M$, and to Cauchy sequences that are supported in a compact subset $K \subset \Int \, M$ independently of the index $i$ of the sequence.
With these modifications, all the definitions and proofs in this paper hold for open contact manifolds.
The rigidity theorems in Section~\ref{sec:automorphisms} are local statements, and thus it suffices in those cases to restrict to homeomorphisms that are the identity on the boundary, and instead of compact support require only convergence on compact subsets.

Following the ideas presented in this article, it is a straightforward task to extend the notion of a topological Hamiltonian dynamical system to other types of non-compact symplectic manifolds that appear for example in the context of symplectic field theory.
This is the case for instance when $(W,\omega)$ is a \emph{symplectic manifold with cylindrical ends}.
Details may be published elsewhere in this series of papers.

\section{The uniqueness theorems} \label{sec:uniqueness-theorems}
In this section, we prove several uniqueness and rigidity results, culminating in the proof of Theorem~\ref{thm:uniqueness}.
These results are inspired by similar theorems for compactly supported Hamiltonians on symplectic manifolds, see \cite{mueller:ghh07} or sections 2.2 and 2.3 in \cite{mueller:ghh08}.
As above, let $(M,\xi)$ be a contact manifold with a contact form $\alpha$, and let $W = M \times \R$ denote the symplectization of $(M,\alpha)$ with symplectic form $\omega = - d (e^\theta \pi_1^* \alpha)$.

\begin{pro} \label{pro:h-z-uniqueness}
Let $\Phi_{H_i}$ be a sequence of smooth contact isotopies of $M$, $\Phi_H$ be another smooth contact isotopy, and $\phi \colon M \to M$ be a function.
Assume
\begin{enumerate}
	\item [(i)] $\| \Hbar \# H_i \|_\alpha \to 0$, as $i \to \infty$,
	\item [(ii)] $\phi_{H_i}^1 \to \phi$ uniformly, as $i \to \infty$, and
	\item [(iii)] $| h_i | \le c$ for some constant $c \in \R$ independently of $i$,
\end{enumerate}
where $h_i \colon [0,1] \times M \to \R$ is given by $(\phi_{H_i}^t)^* \alpha = e^{h_i (t,\cdot)} \alpha$.
Then $\phi = \phi_H^1$.
\end{pro}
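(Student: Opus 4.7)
My plan is to argue by contradiction and reduce everything to the contact energy-capacity inequality of Theorem~\ref{thm:energy-capacity-ineq}. The core idea is to look at the ``difference'' isotopy $\Phi_{F_i} := \Phi_H^{-1} \circ \Phi_{H_i}$, which by Lemma~\ref{lem:ham-gp-str} is generated by the contact Hamiltonian $F_i = \Hbar \# H_i$.

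First I would observe that hypothesis (i) gives $\| F_i \| \to 0$, and that the conformal factor $g_i$ of $\Phi_{F_i}$, computed via Lemma~\ref{lem:conformal-calculus} as
$$g_i (t, \cdot) = - h (t, \cdot) \circ (\phi_H^t)^{-1} \circ \phi_{H_i}^t + h_i (t, \cdot),$$
is uniformly bounded in $i$ thanks to hypothesis (iii) and the boundedness of the smooth function $h$ on the compact manifold $M$.

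Next, suppose for contradiction that $\phi \neq \phi_H^1$, so that there exists $x^* \in M$ with $\phi (x^*) \neq \phi_H^1 (x^*)$. Since $\phi$ is a uniform limit of continuous maps it is itself continuous, and together with continuity of $\phi_H^1$ this lets me select an open ball $B$ around $x^*$ whose closure is mapped by $\phi$ and by $\phi_H^1$ into two disjoint open sets separated by some distance $2 \delta > 0$. The uniform convergence in hypothesis (ii) then forces $\phi_{H_i}^1 (\overline B)$ to lie within distance $\delta$ of $\phi (\overline B)$ for all large $i$, so $\phi_{H_i}^1 (\overline B)$ is disjoint from $\phi_H^1 (\overline B)$. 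Applying $(\phi_H^1)^{-1}$ to both sides, this is equivalent to $\phi_{F_i}^1 (\overline B) \cap \overline B = \emptyset$; that is, $\phi_{F_i}^1$ displaces a fixed compact set containing an open ball in the interior of $M$, uniformly in $i$.

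Having achieved displacement, I apply Theorem~\ref{thm:energy-capacity-ineq} to $\Phi_{F_i}$ to get a constant $C > 0$, depending only on $\overline B$ and the contact form $\alpha$, such that $0 < C e^{- | g_i |} \leq \| F_i \|$. Combined with the uniform bound on $| g_i |$ from the first step, this yields a fixed positive lower bound on $\| F_i \|$, contradicting $\| F_i \| \to 0$, and concluding $\phi = \phi_H^1$. The step I expect to be the key conceptual point is the one that produces a single fixed ball displaced by all $\phi_{F_i}^1$: without any Lipschitz control on $\phi_{H_i}^1$, one cannot pass from a pointwise inequality to displacement of a ball directly, but routing through the continuity of the uniform limit $\phi$ and of the smooth map $\phi_H^1$ sidesteps this difficulty and is exactly the place where hypothesis (ii) enters the argument nontrivially.
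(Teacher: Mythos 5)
Your proposal is correct and follows essentially the same strategy as the paper's proof: assume $\phi \neq \phi_H^1$, extract a ball $B$ displaced by $(\phi_H^1)^{-1} \circ \phi$, use uniform convergence (ii) to show $(\phi_H^1)^{-1} \circ \phi_{H_i}^1$ displaces $B$ for large $i$, and apply Theorem~\ref{thm:energy-capacity-ineq} with the uniformly bounded conformal factor $h_i - h \circ \Phi_H^{-1} \circ \Phi_{H_i}$ to contradict $\| \Hbar \# H_i \| \to 0$. The only differences are cosmetic: you name the difference isotopy $\Phi_{F_i}$ explicitly and spell out the $\delta$-separation argument for the displaced ball, whereas the paper states the displacement more tersely.
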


Since by hypothesis (iii), the sequence $| h_i |$ is bounded independently of $i$, the constants $C (h_i)$ in Lemma~\ref{lem:equiv-norms} can be chosen independently of $i$ as well.
Therefore hypothesis (i) in the proposition is equivalent to the assumption $\| H - H_i \|_\alpha \to 0$ as $i \to \infty$.
The same observation applies in the remainder of the section, and we do not need to distinguish between convergence of the sequence $\Hbar \# H_i \to 0$ and $H_i \to H$ with respect to the distance induced by the norm $\| \cdot \|_\alpha$.

\begin{proof}
Because $\phi$ is the uniform limit of continuous maps $\phi_{H_i}^1$, it must be continuous.
Suppose to the contrary that $\phi \not= \phi_H^1$.
Then there exists a compact ball $B \subset M$ such that $((\phi_H^1)^{-1} \circ \phi) (B) \cap B = \emptyset$.
By hypothesis (ii), $\phi_{H_i}^1 \to \phi$ uniformly, and thus $((\phi_H^1)^{-1} \circ \phi_{H_i}^1) (B) \cap B = \emptyset,$ for all sufficiently large $i$.
But then by Theorem~\ref{thm:energy-capacity-ineq},
	\[ \| \Hbar \# H_i \|_\alpha \ge C e^{- \left| h_i - h \circ \Phi_H^{-1} \circ \Phi_{H_i} \right|} \ge C e^{- c - | h |} > 0, \]
where $h$ is defined by $(\phi_H^t)^* \alpha = e^{h (t,\cdot)} \alpha$.
This contradicts hypothesis (i).
\end{proof}

\begin{cor} \label{cor:unique-isotopy}
Let $\Phi_{H_i}$ be a sequence of smooth contact isotopies of $M$, $\Phi_H$ be another smooth contact isotopy, and $\Phi$ be an isotopy of functions $\phi_t \colon M \to M$.
Assume
\begin{enumerate}
	\item [(i)] $\| \Hbar \# H_i \|_\alpha \to 0$, as $i \to \infty$,
	\item [(ii)] $\Phi_{H_i} \to \Phi$ uniformly, as $i \to \infty$, and
	\item [(iii)] $| h_i | \le c$ for some constant $c \in \R$ independently of $i$,
\end{enumerate}
where $h_i \colon [0,1] \times M \to \R$ is given by $(\phi_{H_i}^t)^* \alpha = e^{h_i (t,\cdot)} \alpha$.
Then $\Phi = \Phi_H$.
\end{cor}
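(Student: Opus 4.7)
The plan is to reduce Corollary~\ref{cor:unique-isotopy} to the time-one case already established in Proposition~\ref{pro:h-z-uniqueness}, by applying that proposition separately at each $s \in [0,1]$ after a time-rescaling trick. In other words, I want to turn the uniform convergence of the entire isotopies into a statement about the time-one maps of suitably reparametrized isotopies, to which the previous proposition directly applies.

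Fix $s \in (0,1]$; I will show $\phi_s = \phi_H^s$. Consider the reparametrized smooth contact isotopies $\Psi_i^s = \{\phi_{H_i}^{st}\}_{t\in[0,1]}$ and $\Psi^s = \{\phi_H^{st}\}_{t\in[0,1]}$, whose time-one maps are $\phi_{H_i}^s$ and $\phi_H^s$, respectively. A direct computation from equations~(\ref{eqn:contact-ham}) and (\ref{eqn:contact-isotopy}) shows that $\Psi^s$ is generated by the smooth contact Hamiltonian $K^s(t,x) = s\,H(st,x)$ with conformal factor $k^s(t,x) = h(st,x)$, and analogously $\Psi_i^s$ is generated by $K_i^s(t,x) = s\,H_i(st,x)$ with conformal factor $k_i^s(t,x) = h_i(st,x)$. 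In particular $|k_i^s| \leq |h_i| \leq c$ uniformly in both $s$ and $i$, so hypothesis~(iii) of Proposition~\ref{pro:h-z-uniqueness} is preserved under rescaling. Hypothesis~(ii), the uniform convergence $\phi_{H_i}^s \to \phi_s$ on $M$, is immediate from the uniform convergence $\Phi_{H_i} \to \Phi$ on $[0,1] \times M$ assumed in the corollary.

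The only nontrivial verification is hypothesis~(i). Using the formula for the operation $\#$ from Lemma~\ref{lem:ham-gp-str}, one computes
\[
(\overline{K^s} \# K_i^s)(t,x) = s\,(\Hbar \# H_i)(st,x),
\]
and the change of variables $u = st$ in the integral defining $\|\cdot\|$ in equation~(\ref{eqn:contact-length}) then yields
\[
\|\overline{K^s} \# K_i^s\| \;=\; \int_0^s \bigl( \osc\bigl((\Hbar\# H_i)_u\bigr) + \bigl|c((\Hbar\# H_i)_u)\bigr| \bigr)\,du \;\leq\; \|\Hbar \# H_i\|,
\]
which tends to zero by hypothesis~(i) of the corollary. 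Proposition~\ref{pro:h-z-uniqueness} applied to the sequence $\Psi_i^s$, the smooth contact isotopy $\Psi^s$, and the limit map $\phi_s$ therefore gives $\phi_s = \phi_H^s$. Since $s \in (0,1]$ was arbitrary (and $\phi_0 = \id = \phi_H^0$ trivially), we conclude $\Phi = \Phi_H$. The only real content beyond Proposition~\ref{pro:h-z-uniqueness} is the rescaling bookkeeping in the two displays above; both identities follow from direct substitution, and no new analytic ingredient is required.
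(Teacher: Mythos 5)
Your proposal is correct and takes essentially the same route as the paper: reparametrize $t \mapsto st$ (the paper delegates this to Lemma~\ref{lem:rep}, you spell it out) and apply Proposition~\ref{pro:h-z-uniqueness} to the rescaled data. The only cosmetic difference is that the paper argues by contradiction, picking an $s$ where $\phi_s \neq \phi_H^s$, while you conclude $\phi_s = \phi_H^s$ directly for every $s$; your explicit check that $\|\overline{K^s}\#K_i^s\| \le \|\Hbar\#H_i\|$ is a correct instance of the computation the paper leaves implicit.
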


In order to give the proof, we need the smooth version of the next lemma.
The proof is straightforward and thus omitted.

\begin{lem} \label{lem:rep}
Let $(\Phi_H, H, h)$ be a smooth (or topological) contact dynamical system.
For $s \in [0,1]$, the reparametrization $\Phi_{H^s} = \{ \phi_{H^s}^t \} = \{ \phi_H^{s t} \}$ is also a smooth (or topological) contact isotopy, with time-one map $\phi_H^s$, smooth (or topological) contact Hamiltonian $H^s$, and smooth (or topological) conformal factor $h^s$, where the Hamiltonian and conformal factor are given by
	\[ H^s (t,x) = s H (s t,x) \quad \text{and} \quad h^s (t,x) = h (s t,x). \]
\end{lem}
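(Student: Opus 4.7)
The plan is to first dispatch the smooth case by a direct computation, and then deduce the topological case by passing to the limit in an approximating sequence, using that the norms $\|\cdot\|$ and $|\cdot|$ behave well under the affine reparameterization $t \mapsto st$.

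For the smooth case, I would set $\psi_t = \phi_H^{st}$ and compute
$\tfrac{d}{dt}\psi_t = s\, X_H^{st}\!\circ\phi_H^{st} = s\, X_H^{st}\!\circ \psi_t$,
so the generating vector field is $Y_t = s\, X_H^{st}$. Since $\iota(X_H^{st})\alpha = H(st,\cdot)$, we immediately get $\iota(Y_t)\alpha = s\,H(st,\cdot) = H^s(t,\cdot)$, which identifies $H^s$ as the contact Hamiltonian. The conformal factor is even more direct: $\psi_t^*\alpha = (\phi_H^{st})^*\alpha = e^{h(st,\cdot)}\alpha = e^{h^s(t,\cdot)}\alpha$. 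Thus $(\Phi_{H^s},H^s,h^s)$ is a smooth contact dynamical system in the smooth case.

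For the topological case, I would choose an approximating sequence $(\Phi_{H_i},H_i,h_i) \in \C(M,\alpha)$ with $d_\alpha\bigl((\Phi_{H_i},H_i,h_i),(\Phi_H,H,h)\bigr) \to 0$, and verify that the reparameterized triples $(\Phi_{H_i^s},H_i^s,h_i^s)$ (which are smooth contact dynamical systems by the previous paragraph) converge to $(\Phi_{H^s},H^s,h^s)$ in the contact metric $d_\alpha$. Uniform convergence of $\Phi_{H_i^s}$ to the isotopy $\{\phi_H^{st}\}_{t\in[0,1]}$ and uniform convergence of the functions $h_i^s(t,x)=h_i(st,x)$ to $h^s$ follow at once from the fact that uniform convergence over $t\in[0,1]$ implies uniform convergence over $t\in[0,s]$, hence over the reparameterized variable. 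For the Hamiltonian norm, the change of variables $u = st$ gives
\[
\|H^s - H_i^s\| = \int_0^1 s\bigl(\osc(H_{st}-H_{i,st}) + |c(H_{st}-H_{i,st})|\bigr)\,dt = \int_0^s\bigl(\osc(H_u-H_{i,u}) + |c(H_u-H_{i,u})|\bigr)\,du \le \|H-H_i\|,
\]
which tends to zero. Hence $(\Phi_{H^s},H^s,h^s) \in \TC(M,\alpha)$.

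I do not expect any substantial obstacle: the smooth computation is a one-line chain rule argument, and the limit step is precisely the kind of manipulation that the metric $d_\alpha$ was designed to accommodate, the only minor point being the change-of-variables check that the Hamiltonian norm is nonincreasing under this reparameterization. Note that no appeal to the uniqueness Theorem~\ref{thm:uniqueness} is needed, since the triple $(\Phi_{H^s},H^s,h^s)$ is exhibited directly as the $d_\alpha$-limit of an explicit Cauchy sequence in $\C(M,\alpha)$.
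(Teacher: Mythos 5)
Your proof is correct. The paper itself omits the proof with the remark that it is straightforward, so there is no written argument to compare against, but your two-step structure is plainly the intended one: the smooth case follows from the chain rule identity $\tfrac{d}{dt}\phi_H^{st} = s\,X_H^{st}\circ\phi_H^{st}$ together with $\iota(X_H^{st})\alpha = H(st,\cdot)$, and the topological case follows by checking that each of the three ingredients of $d_\alpha$ is nonincreasing under the reparameterization $t\mapsto st$ (the $\dbar_M$-distance and the sup-norm on conformal factors obviously, the $\|\cdot\|$-norm via the change of variables $u=st$, exactly as you compute). The one point worth emphasizing, which you rightly flag, is that the argument must not invoke Theorem~\ref{thm:uniqueness}: Lemma~\ref{lem:rep} is used inside the proof of Corollary~\ref{cor:unique-isotopy}, which in turn feeds into the proof of the uniqueness theorem, so exhibiting $(\Phi_{H^s},H^s,h^s)$ directly as the $d_\alpha$-limit of the explicit sequence $(\Phi_{H_i^s},H_i^s,h_i^s)$ avoids circularity. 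Your argument does exactly this.
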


More general reparametrizations, where the map $t \mapsto s t$ is replaced by a smooth function $\zeta \colon [0,1] \to [0,1]$, will be considered in Sections~\ref{sec:examples} and \ref{sec:bi-invariant-metric} as well as in the sequels \cite{ms:tcd2, ms:tcd3}.

\begin{proof}[Proof of Corollary~\ref{cor:unique-isotopy}]
Suppose the contrary that $\Phi \not= \Phi_H$, i.e.\ there exists $s \in (0,1]$ such that $\phi_s \not= \phi_H^s$.
By Lemma~\ref{lem:rep}, the smooth contact dynamical systems $(\Phi_{H_i^s}, H_i^s, h_i^s)$, the smooth contact isotopy $\Phi_{H^s}$, and the function $\phi_s$, together satisfy the hypothesis of Proposition~\ref{pro:h-z-uniqueness}.
Thus reparametrizing with the function $t \mapsto s t$, we may assume $s = 1$.
Applying Proposition~\ref{pro:h-z-uniqueness} yields a contradiction, hence the proof.
\end{proof}

\begin{pro} \label{pro:unique-conformal-factor}
Let $\Phi_{H_i}$ be a sequence of contact isotopies on $M$, $\Phi_H$ be another smooth contact isotopy, and $g \colon [0,1] \times M \to \R$ be a function.
Assume
\begin{enumerate}
	\item [(i)] $\| \Hbar \# H_i \|_\alpha \to 0$, as $i \to \infty$, and
	\item [(ii)] $| g - h_i | \to 0$ as $i \to \infty$,
\end{enumerate}
where $h_i \colon [0,1] \times M \to \R$ is given by $(\phi_{H_i}^t)^* \alpha = e^{h_i (t,\cdot)} \alpha$, and similarly $(\phi_H^t)^* \alpha = e^{h (t,\cdot)} \alpha$.
Then $g = h$.
\end{pro}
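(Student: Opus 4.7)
The plan is to argue by contradiction, lifting everything to the symplectization $W = M \times \R$ and applying the symplectic energy-capacity inequality to the difference isotopy $\widehat{\Psi}_i := \Phihat_H^{-1} \circ \Phihat_{H_i}$. Suppose $g \neq h$; by the reparameterization in Lemma~\ref{lem:rep}, I may assume the disagreement occurs at time $t = 1$ (the reparameterization rescales Hamiltonians and conformal factors in a way compatible with hypotheses (i) and (ii)), so $\delta := g(1, x_0) - h(1, x_0) > 0$ for some $x_0 \in M$ (the case $\delta < 0$ is analogous). The goal is to produce a compact set $\Khat \subset W$ containing an open ball, such that the time-one map of $\widehat{\Psi}_i$ displaces $\Khat$ for all sufficiently large $i$, then derive a contradiction from the smallness of the contact Hamiltonian $\Hbar \# H_i$ guaranteed by hypothesis (i).

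Using continuity of $g$ and $h$ together with the uniform convergence $h_i \to g$, pick a small ball $B \ni x_0$ and an index $i_0$ so that $h_i(1, x) - h(1, y) > \delta / 2$ for all $x, y \in \overline{B}$ and all $i \geq i_0$. Choose $a < b$ with $b - a < \delta / 2$ and set $\Khat := \overline{B} \times [a, b]$. A direct computation from the lift formula (\ref{eqn:lifted-diffeo}) yields
\[
\widehat{\Psi}_i^1 (x, \theta) = \bigl( \sigma_i (x), \, \theta + \tau_i (x) \bigr),
\]
where $\sigma_i (x) := (\phi_H^1)^{-1} ( \phi_{H_i}^1 (x) )$ and $\tau_i (x) := - h_i (1, x) + h (1, \sigma_i (x))$. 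For $(x, \theta) \in \Khat$, either $\sigma_i (x) \notin \overline{B}$ (so the image exits $\Khat$ via the first coordinate), or $\sigma_i (x) \in \overline{B}$, in which case $\tau_i (x) < - \delta / 2$ by the estimate above, forcing $\theta + \tau_i (x) < a$; either way $\widehat{\Psi}_i^1 (\Khat) \cap \Khat = \emptyset$ for $i \geq i_0$.

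Exactly as in the proof of Theorem~\ref{thm:energy-capacity-ineq}, the uniform boundedness of the $| h_i |$ and of $| h |$ supplies a constant $c$ with $\widehat{\Psi}_i^t (\Khat) \subset M \times [a - c, b + c]$ for all $t \in [0,1]$ and all $i$. Truncating the non-compactly supported lifted Hamiltonian $e^\theta (\Hbar \# H_i) (t, x)$ of $\widehat{\Psi}_i$ by a cutoff $\rho$ in the $\theta$-variable, constant equal to $1$ on $[a - c, b + c]$ and supported in $[a - c - 1, b + c + 1]$, leaves the flow on $\Khat$ over $[0,1]$ unchanged, so the truncated time-one map still displaces $\Khat$. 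The symplectic energy-capacity inequality (\ref{eqn:energy-capacity-ineq}) now yields
\[
0 < \tfrac{1}{2} \, c (\Khat) \leq e^{b + c + 1} \, \| \Hbar \# H_i \|
\]
for all $i \geq i_0$, contradicting hypothesis (i) that $\| \Hbar \# H_i \| \to 0$. Hence $g = h$.

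The main obstacle is that, unlike in Proposition~\ref{pro:h-z-uniqueness}, the hypotheses provide no a priori uniform convergence of the isotopies $\Phi_{H_i}$, so we cannot control where $\sigma_i (x)$ lands in $M$. The key insight is that such control is unnecessary: the displacement argument uses a case split in which the $M$- and $\theta$-coordinates alternately certify disjointness, and the conformal-factor discrepancy forces displacement in the $\theta$-direction precisely when the $M$-direction is uncooperative. The remaining technicalities---handling the non-compactness of the symplectization via a $\theta$-cutoff and passing between $\| \cdot \|$ and $\| \cdot \|_{a,b}$---are already developed in Section~\ref{sec:energy-capacity-ineq}.
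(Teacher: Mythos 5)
Your proof is correct and follows the same strategy as the paper: lift to the symplectization, use the discrepancy between the limiting conformal factor $g$ and the actual conformal factor $h$ to force displacement of a small product set $\Khat$, truncate the admissible Hamiltonian in the $\theta$-direction, and invoke the symplectic energy-capacity inequality to contradict $\| \Hbar \# H_i \| \to 0$.

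The one place your argument diverges in setup, and which is worth pointing out, is the choice of composition and of $\Khat$. You work with $\Phihat_H^{-1} \circ \Phihat_{H_i}$ and $\Khat = \overline{B} \times [a,b]$, and since the $\theta$-shift $\tau_i(x) = -h_i(1,x) + h(1,\sigma_i(x))$ involves $\sigma_i(x) = (\phi_H^1)^{-1}(\phi_{H_i}^1(x))$, which you cannot locate in $M$ (no $C^0$-control on $\Phi_{H_i}$), you need the case split ``either $\sigma_i(x)$ leaves $\overline{B}$, or it stays and then the $\theta$-shift is large.'' The paper instead writes the displacing map as $\phi_{\H_i}^s \circ (\phi_\H^s)^{-1}$ and takes $\Khat = \phi_H^s(B) \times [-\epsilon,\epsilon]$; with that ordering the $\theta$-shift at a point $(x,\theta) \in \Khat$ is $-(h_i(s,\cdot)-h(s,\cdot))\bigl((\phi_H^s)^{-1}(x)\bigr)$, whose argument $(\phi_H^s)^{-1}(x)$ is guaranteed to lie in $B$, so the shift is uniformly bounded away from zero with a definite sign and no case analysis is needed. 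Your version is perfectly sound---the case split does the same work---but the paper's ordering is the cleaner trick, and it is also the reason the paper does not bother to reparameterize to time $1$. Both versions must relate the Hofer length of the truncated lift back to $\| \Hbar \# H_i \|$; yours is the more direct on this front since $\overline{\H} \# \H_i = e^\theta (\Hbar \# H_i)$ literally, whereas the paper's ordering implicitly uses conjugation invariance of displacement energy.
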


\begin{proof}
Again $g$ must be continuous since it is the uniform limit of continuous functions $h_i$.
Suppose the contrary that $g \not= h$.
Then there exists $s \in (0,1]$, $B \subset M$ a sufficiently small compact ball, and $\epsilon > 0$, such that $| g (s,x) - h (s,x) | > 2 \epsilon$ for all $x \in B$.
Recall that 
	\[ \phi_{\wH_i}^s \circ (\phi_{\wH}^s)^{-1} = \left( \phi_{H_i}^s \circ (\phi_H^s)^{-1}, \theta - (h_i (s,\cdot) - h (s,\cdot)) \circ (\phi_H^s)^{-1} \right). \]
Thus if $\Khat = \phi_H^s (B) \times [-\epsilon,\epsilon]$, then
	\[ \left( \phi_{\wH_i}^s \circ (\phi_{\wH}^s)^{-1} \right) (\Khat) \cap \Khat = \emptyset, \]
for all sufficiently large $i$.
Hypothesis (ii) implies that the numbers $| h_i |$ are bounded by a constant $c$ independently of $i$.
Arguing as in the proof of Theorem~\ref{thm:energy-capacity-ineq} and Corollary~\ref{cor:unique-isotopy}, choose a cut-off function $\rho$, and apply the energy-capacity inequality.
From hypothesis (i) one then derives a contradiction.
\end{proof}

Note that the corresponding isotopies $\Phi_{H_i}$ being uniformly Cauchy is not necessary for the proof.
Displacement of the set $\Khat$, and being able to choose the cut-off function $\rho$ independently of $i$, is guaranteed by hypothesis (ii).

Combining Corollary~\ref{cor:unique-isotopy} and Proposition~\ref{pro:unique-conformal-factor}, we obtain the main uniqueness theorem of this article.

\begin{cor} \label{cor:cds-unique}
Let $\Phi_{H_i}$ be a sequence of smooth contact isotopies of $M$, $\Phi_H$ be another smooth contact isotopy, $\Phi$ be an isotopy of functions $\phi^t \colon M \to M$, and $g \colon [0,1] \times M \to \R$ be a function.
Assume
\begin{enumerate}
	\item [(i)] $\| \Hbar \# H_i \|_\alpha \to 0$, as $i \to \infty$,
	\item [(ii)] $\Phi_{H_i} \to \Phi$ uniformly, as $i \to \infty$, and
	\item [(iii)] $| g - h_i | \to 0$ as $i \to \infty$,
\end{enumerate}
where $h_i \colon [0,1] \times M \to \R$ is given by $(\phi_{H_i}^t)^* \alpha = e^{h_i (t,\cdot)} \alpha$, and similarly $(\phi_H^t)^* \alpha = e^{h (t,\cdot)} \alpha$.
Then $\Phi = \Phi_H$ and $g = h$.
\end{cor}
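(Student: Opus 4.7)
The plan is to derive this corollary as an immediate combination of Corollary~\ref{cor:unique-isotopy} and Proposition~\ref{pro:unique-conformal-factor}, whose hypotheses are already essentially given. The only subtle point is that Corollary~\ref{cor:unique-isotopy} requires the uniform boundedness hypothesis $|h_i| \leq c$ independently of $i$, which is not stated explicitly here but follows from hypothesis (iii).

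First I would verify that the sequence $\{h_i\}$ is uniformly bounded. Since $g \colon [0,1] \times M \to \R$ is the uniform limit of the continuous functions $h_i$, it is itself continuous, and since $[0,1] \times M$ is compact, $g$ is bounded. Combined with $|g - h_i| \to 0$, this yields a constant $c < \infty$ with $|h_i| \leq |g| + |g - h_i| \leq c$ for all sufficiently large $i$, and after enlarging $c$ we may assume this holds for all $i$.

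Next I would apply Corollary~\ref{cor:unique-isotopy} directly. Hypotheses (i) and (ii) of the corollary are precisely hypotheses (i) and (ii) of the present statement, and the uniform bound $|h_i| \leq c$ just established supplies the remaining hypothesis (iii) of that corollary. This yields $\Phi = \Phi_H$.

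Finally I would apply Proposition~\ref{pro:unique-conformal-factor}, whose hypotheses (i) and (ii) coincide with (i) and (iii) of the present statement. Note that Proposition~\ref{pro:unique-conformal-factor} does not require the isotopies $\Phi_{H_i}$ to converge uniformly, so no additional verification is needed. This gives $g = h$, completing the proof. There is no substantive obstacle here beyond bookkeeping, since all analytic content, in particular the use of the contact energy-capacity inequality, has already been absorbed into the two cited results.
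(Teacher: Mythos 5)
Your proposal is correct and takes essentially the same route as the paper, which proves this corollary simply by combining Corollary~\ref{cor:unique-isotopy} with Proposition~\ref{pro:unique-conformal-factor}. Your additional observation that $|h_i| \leq c$ (the boundedness needed by Corollary~\ref{cor:unique-isotopy}) follows from hypothesis (iii) via continuity of the uniform limit $g$ and compactness of $[0,1] \times M$ is a correct piece of bookkeeping that the paper leaves implicit.
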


\begin{lem} \label{lem:isotopy-equivalent}
Let $(\Phi_i, H_i, h_i) \in \C (M,\alpha)$ be a sequence of smooth contact dynamical systems, converging with respect to the contact metric $d_\alpha$ to the topological contact dynamical system $(\Phi, H, h) \in \TC (M,\alpha)$.
Then the following statements are all equivalent.
\begin{enumerate}
	\item [(i)] Suppose $(\Psi_i, F_i, f_i)$ is another sequence of smooth contact dynamical systems that converges with respect to the contact metric $d_\alpha$ to the topological contact dynamical system $(\Psi, F, f) \in \TC (M,\alpha)$.
	If $H = F$, then $\Phi = \Psi$, and $h = f$.
	\item [(ii)] If $H$ is smooth, then $\Phi$ is a smooth isotopy, and in fact, $\Phi = \Phi_H$ is the smooth contact isotopy generated by the smooth function $H$ in the sense of equation~(\ref{eqn:contact-ham}).
	Moreover, the function $h$ is smooth, and equals the smooth conformal factor of the smooth contact isotopy $\Phi_H$, i.e.\ $(\phi_H^t)^* \alpha = e^{h (t,\cdot)} \alpha$.
	\item [(iii)] If $H = 0$, then $\Phi = \id$, and $h = 0$.
\end{enumerate}
\end{lem}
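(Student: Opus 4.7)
The plan is to establish the cyclic chain $(i) \Rightarrow (ii) \Rightarrow (iii) \Rightarrow (i)$. The implication $(ii) \Rightarrow (iii)$ is immediate: the zero function is smooth, its smooth contact flow is the identity, and the conformal factor determined by~\eqref{eqn:formula-conformal-factor} vanishes identically, so applying $(ii)$ to $H = 0$ yields $\Phi = \id$ and $h = 0$. For $(i) \Rightarrow (ii)$, I would assume $H$ is smooth and let $\Phi_H$ be the smooth isotopy generated by $H$ via~\eqref{eqn:contact-ham} with smooth conformal factor $h_H$ from~\eqref{eqn:formula-conformal-factor}. The constant sequence $(\Phi_H, H, h_H)$ is trivially $d_\alpha$-convergent to itself, so hypothesis $(i)$ applied to it and to the given sequence $(\Phi_i, H_i, h_i) \to (\Phi, H, h)$, both having topological Hamiltonian $H$, forces $\Phi = \Phi_H$ and $h = h_H$, both smooth and with the required generating identities.

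The heart of the argument is $(iii) \Rightarrow (i)$. Given two $d_\alpha$-convergent sequences $(\Phi_i, H_i, h_i) \to (\Phi, H, h)$ and $(\Psi_i, F_i, f_i) \to (\Psi, F, f)$ of smooth contact dynamical systems with $H = F$, I would form the composite sequence
\begin{equation*}
	\bigl( \Psi_i^{-1} \circ \Phi_i,\ \Fbar_i \# H_i,\ h_i - f_i \circ \Psi_i^{-1} \circ \Phi_i \bigr) \in \C (M, \alpha),
\end{equation*}
whose Hamiltonian and conformal factor components are prescribed by Lemmas~\ref{lem:conformal-calculus} and \ref{lem:ham-gp-str}. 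Continuity of composition and inversion in $(\PHomeo (M), \dbar_M)$ yields $\Psi_i^{-1} \circ \Phi_i \to \Psi^{-1} \circ \Phi$ uniformly; the uniform boundedness of $|f_i|$ and $|h_i|$, together with the uniform continuity of $f$ on the compact manifold $M$, yields uniform convergence of the conformal factors $h_i - f_i \circ \Psi_i^{-1} \circ \Phi_i \to h - f \circ \Psi^{-1} \circ \Phi$.

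The main obstacle is showing $\| \Fbar_i \# H_i \| \to 0$, since the norm~\eqref{eqn:contact-length} is not invariant under composition with a non-strictly contact diffeomorphism and Lemma~\ref{lem:equivalent-norms} only handles multiplication by a time-independent function. I would circumvent this by passing to the equivalent integrated maximum norm $\int_0^1 \max_{x \in M} | G (t,x) |\, dt$, which on each time slice dominates and is dominated by $\osc + |c|$ by Lemma~\ref{lem:max-norm}, is exactly invariant under composition with any diffeomorphism, and distorts by at most the factor $e^{|f_i|}$ under multiplication by $e^{-f_i}$. Combining this with $\| H_i - F_i \| \to 0$ and the uniform bound on $|f_i|$ yields $\| \Fbar_i \# H_i \| \to 0$. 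The composite sequence therefore $d_\alpha$-converges to $(\Psi^{-1} \circ \Phi,\ 0,\ h - f \circ \Psi^{-1} \circ \Phi)$, and applying hypothesis $(iii)$ to this limit gives $\Psi^{-1} \circ \Phi = \id$ together with $h - f \circ \Psi^{-1} \circ \Phi = 0$. Substituting the first identity into the second yields $\Phi = \Psi$ and $h = f$, establishing~$(i)$.
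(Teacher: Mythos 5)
Your proof is correct and follows the same cyclic chain $(i)\Rightarrow(ii)\Rightarrow(iii)\Rightarrow(i)$ as the paper, with identical treatment of the first two implications (the constant sequence with Hamiltonian $H$ for $(i)\Rightarrow(ii)$, and the zero Hamiltonian for $(ii)\Rightarrow(iii)$). Where you diverge is in the workhorse implication $(iii)\Rightarrow(i)$: the paper disposes of the convergence of the composite sequence $(\Phi_{H_i}^{-1}\circ\Phi_{F_i}, \Hbar_i\#F_i, \hbar_i\#f_i)$ with a single citation to Theorem~\ref{thm:topo-group} (continuity of group operations in $\TC(M,\alpha)$, itself established via Lemma~\ref{lem:cauchy} and its triangle-inequality bookkeeping involving terms like $|e^{-h_i}-e^{-h_j}|$), whereas you re-derive precisely the convergences needed, in a self-contained way. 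Your replacement argument for $\|\Fbar_i\#H_i\|\to 0$, switching to the integrated maximum norm $\int_0^1\max_M|\cdot|\,dt$ and exploiting its exact invariance under composition with diffeomorphisms together with the uniform bound on $|f_i|$, is cleaner than invoking Lemma~\ref{lem:cauchy} in this special case where the target Hamiltonian is zero; it buys you independence from the full topological-group theorem at the price of covering only the case at hand rather than general products. Both routes are valid, and yours has the pedagogical advantage of making the lemma's proof readable without the machinery of Section~\ref{sec:topo-group}.
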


\begin{proof}
To see that (i) implies (ii), choose the sequence $F_i = H$.
That (ii) implies (iii) is obvious, since the zero Hamiltonian is a smooth function.
We prove that (iii) in turn implies (i).
By Theorem~\ref{thm:topo-group}, the sequence of smooth contact Hamiltonians $\Hbar_i \# F_i$ converges to the zero Hamiltonian, the sequence of isotopies $\Phi_{H_i}^{-1} \circ \Phi_{F_i}$ converges to $\Phi^{-1} \circ \Psi$ in the $C^0$-metric, and the sequence of conformal factors of $\Phi_{H_i}^{-1} \circ \Phi_{F_i}$ converges to the continuous function $g - h \circ \Phi^{-1} \circ \Psi$ uniformly.
Then by (iii), $\Phi^{-1} \circ \Psi = \id$, and $g - h = g - h \circ \Phi^{-1} \circ \Psi = 0$.
\end{proof}

Although not stated explicitly in \cite{mueller:ghh07}, an analogous lemma also holds for Hamiltonian dynamical systems.
See Section~\ref{sec:sequels} for the converse statement.

\begin{proof}[Proof of Theorem~\ref{thm:uniqueness}]
Combine Corollary~\ref{cor:cds-unique} and Lemma~\ref{lem:isotopy-equivalent}.
\end{proof}

The uniqueness results of this paper have a number of immediate consequences for topological contact dynamical systems that resemble well-known results in the smooth case.
As a demonstration, we prove two lemmas.
See also Section~\ref{sec:smooth-conformal-factors} and \cite{ms:hvf12, ms:tcd3}.

\begin{lem}
Let $(\Phi, H, h)$ be a topological contact dynamical system, and suppose that the function $H$ is autonomous, and $H \circ \phi_H^t = e^{h_t} H$ for all $t$.
Then $\Phi = \{ \phi_t \}$ is a one-parameter subgroup of $\Aut (M,\xi)$.
\end{lem}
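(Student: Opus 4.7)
The plan is to combine the reparametrization Lemma~\ref{lem:rep}, the group structure on $\TC (M,\alpha)$ from Theorem~\ref{thm:topo-group}, and the uniqueness Theorem~\ref{thm:uniqueness}. Fix $s \in [0,1]$. By Lemma~\ref{lem:rep}, the reparametrized isotopy $\Phi^{(s)} = \{\phi_H^{st}\}_{t \in [0,1]}$ is a topological contact isotopy whose topological contact Hamiltonian is $sH(st,\cdot) = sH$, autonomous because $H$ is, and whose topological conformal factor $h(st,\cdot)$ vanishes identically. The same applies to $\Phi^{(1-s)}$, with topological contact Hamiltonian $(1-s)H$ and vanishing conformal factor.

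Next I would form the composition $\Phi^{(s)} \circ \Phi^{(1-s)}$ inside the topological group $\TC (M,\alpha)$, which at each time $t$ equals $\phi_H^{st} \circ \phi_H^{(1-s)t}$. Applying the composition formulas from Lemma~\ref{lem:ham-gp-str}, extended to $\TC (M,\alpha)$ by Theorem~\ref{thm:topo-group}, and using that both conformal factors vanish, the topological contact Hamiltonian of this composition is
\[ sH + (1-s)\bigl( H \circ (\phi_H^{st})^{-1} \bigr) = sH + (1-s)H = H, \]
where the conservation hypothesis $H \circ \phi_H^t = H$ (equivalently $H \circ (\phi_H^t)^{-1} = H$) is used at the final equality; the topological conformal factor is $0 \circ \phi_H^{(1-s)t} + 0 = 0$. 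Since $(\Phi, H, 0)$ itself has exactly the same topological contact Hamiltonian and conformal factor, Theorem~\ref{thm:uniqueness} forces
\[ \phi_H^{st} \circ \phi_H^{(1-s)t} = \phi_H^t \qquad \text{for all } s, t \in [0,1]. \]

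Setting $a = st$ and $b = (1-s)t$, this yields the additive group law $\phi_a \circ \phi_b = \phi_{a+b}$ for every $a, b \ge 0$ with $a + b \le 1$. Each $\phi_t$ is a contact homeomorphism and so lies in $\Homeo (M,\xi) \subseteq \Aut (M,\xi)$, by the path-connectedness proposition together with Proposition~\ref{pro:gp-str-auto}. Using inverses in $\Aut (M,\xi)$, one extends $\{\phi_t\}_{t \in [0,1]}$ uniquely to a homomorphism $\R \to \Aut (M,\xi)$ in the standard manner, by setting $\phi_{-t} = \phi_t^{-1}$ and $\phi_{n+r} = \phi_1^n \circ \phi_r$ for $n \in \Z$ and $r \in [0,1)$, which realizes $\Phi$ as a one-parameter subgroup of $\Aut (M,\xi)$. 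The main conceptual work has been absorbed into Theorem~\ref{thm:uniqueness}; given this, the remainder is a formal manipulation of reparametrizations and compositions, and the only subtle point is verifying that the composition formulas originally derived for smooth isotopies carry over unchanged to $\TC (M,\alpha)$, which is exactly what Theorem~\ref{thm:topo-group} provides.
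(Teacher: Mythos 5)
Your proof is correct, and it reaches the group law by a genuinely different decomposition than the paper's. The paper fixes $s$ and considers the time-shifted isotopies $\{ \phi_H^{t+s} \circ (\phi_H^s)^{-1} \}$ and $\{ (\phi_H^s)^{-1} \circ \phi_H^{t+s} \}$; autonomy of $H$ shows the first has Hamiltonian $H_{t+s} = H_t$, while the transformation law together with energy conservation and $h = 0$ shows the second has Hamiltonian $e^{-h_s}(H \circ \phi_H^s) = H$, so Theorem~\ref{thm:uniqueness} identifies both with $\Phi$ and yields $\phi_H^t \circ \phi_H^s = \phi_H^{t+s} = \phi_H^s \circ \phi_H^t$ in one stroke. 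You instead compose the two linear reparametrizations $\{\phi_H^{st}\}$ and $\{\phi_H^{(1-s)t}\}$ inside $\TC(M,\alpha)$ and compute the Hamiltonian of the product via Lemma~\ref{lem:ham-gp-str}; the same two hypotheses (conservation and $h=0$) collapse it to $H$, and uniqueness gives $\phi_H^{st} \circ \phi_H^{(1-s)t} = \phi_H^t$, hence the additive law with commutativity as a free by-product. Your route has the merit of invoking only statements the paper records explicitly (Lemma~\ref{lem:rep}, Theorem~\ref{thm:topo-group}, Theorem~\ref{thm:uniqueness}), whereas the paper's time-shift step is asserted somewhat tersely; it is also more careful about only forming $\phi_H^{a+b}$ for $a+b \le 1$ and then extending to a homomorphism of $\R$, a point the paper glosses over. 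The paper's version is shorter and exhibits the commutation relation directly. Both arguments place the entire burden on the uniqueness theorem, as they should.
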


\begin{proof}
Fix $s \in [0,1]$.
Since $\phi_H^s \in \Aut (M,\xi)$, the isotopy $\{ \phi_H^{t + s} \circ (\phi_H^s)^{-1} \}$ is a topological contact isotopy.
By hypothesis, $H_t = H_{t + s}$ for all $0 \le t \le 1$, therefore this isotopy coincides with the topological contact isotopy $\{ \phi_H^t \}$ by Theorem~\ref{thm:uniqueness}.
Similarly, $H = e^{- h_s} (H \circ \phi_H^s)$, so that the topological contact isotopies $\{ \phi_H^t \}$ and $\{ (\phi_H^s)^{-1} \circ \phi_H^{t + s} \}$ coincide.
Thus $\phi_H^t \circ \phi_H^s = \phi_H^{t + s} = \phi_H^s \circ \phi_H^t$ for all $0 \le s, t \le 1$, and $\Phi$ is a one-parameter subgroup.
\end{proof}

\begin{dfn}[Basic function]
A (not necessarily differentiable) function $H \colon M \to \R$ is \emph{basic} if it is invariant under the Reeb flow, i.e.\ $H (\phi_R^s (x)) = H (x)$ for all $x \in M$, and all $0 \le s \le 1$, where $\{ \phi_R^s \}$ denotes the Reeb flow.
A time-dependent function $H \colon [0,1] \times M \to \R$ is basic if the function $H_t$ is basic at each time $0 \le t \le 1$.
\end{dfn}

\begin{lem}
This definition coincides with the usual definition of a smooth basic function if $H$ is continuously differentiable in the Reeb direction.
\end{lem}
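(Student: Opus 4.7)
My plan is to show the equivalence by passing between the flow and its infinitesimal generator via a one-variable calculus argument along Reeb trajectories. Fix $t \in [0,1]$ and $x \in M$, and consider the real-valued function $\gamma(s) = H_t(\phi_R^s(x))$ of $s \in [0,1]$. Since $H$ is continuously differentiable in the Reeb direction and the Reeb flow is smooth, $\gamma$ is $C^1$ with
\[
	\gamma'(s) \;=\; (R . H_t)\bigl(\phi_R^s(x)\bigr),
\]
by the chain rule along the Reeb orbit through $x$.

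For the forward implication, assume $H$ is basic in the sense of the new definition, so that $\gamma(s) = H_t(x)$ for every $s$. Differentiating at $s = 0$ yields $(R . H_t)(x) = \gamma'(0) = 0$. Since $x \in M$ and $t \in [0,1]$ were arbitrary, this gives $R . H_t \equiv 0$ for all $t$, which is the classical definition of a basic function.

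For the converse, assume $R . H_t \equiv 0$ for all $t$. Then $\gamma'(s) = (R . H_t)(\phi_R^s(x)) = 0$ for every $s$, so $\gamma$ is constant on $[0,1]$, and therefore
\[
	H_t\bigl(\phi_R^s(x)\bigr) \;=\; \gamma(s) \;=\; \gamma(0) \;=\; H_t(x)
\]
for all $x \in M$ and all $s,t \in [0,1]$. This is invariance under the Reeb flow.

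The argument is entirely a one-variable fundamental theorem of calculus applied along fixed smooth Reeb trajectories, so no obstacle arises beyond observing that the hypothesis of continuous differentiability in the Reeb direction is exactly what is required to legitimately pass between the infinitesimal condition $R . H_t = 0$ and the integrated condition $H_t \circ \phi_R^s = H_t$; no regularity in the transverse directions is needed.
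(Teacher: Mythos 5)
Your proof is correct and takes essentially the same approach as the paper: both hinge on the identity $\frac{d}{ds}(H_t \circ \phi_R^s) = (R.H_t)\circ\phi_R^s$, i.e.\ differentiating $H_t$ along Reeb orbits. The paper states this chain of identities compactly in pull-back/Lie-derivative notation in a single line, while you unpack it pointwise via the auxiliary function $\gamma(s)=H_t(\phi_R^s(x))$ and spell out both directions; the content is the same.
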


\begin{proof}
The claim follows immediately from the identities
	\[ (\phi_R^s)^* (R_\alpha . H_t) = (\phi_R^s)^* (\mL_{R_\alpha} H_t) = \frac{d}{ds} ((\phi_R^s)^* H_t) = \frac{d}{ds} (H_t \circ \phi_R^s). \qedhere \]
\end{proof}

\begin{lem}
If $H$ is a basic topological contact Hamiltonian with topological contact isotopy $\Phi_H$, then $\phi_H^t$ commutes with the Reeb flow $\{ \phi_R^s \}$ of $\alpha$ for all times $s$ and $t$.
\end{lem}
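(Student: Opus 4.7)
The plan is to apply the transformation law (Theorem~\ref{thm:transformation-law}) with the conjugating automorphism equal to a time-$s$ Reeb map, and then invoke the uniqueness theorem (Theorem~\ref{thm:uniqueness}) to conclude that conjugation leaves $\Phi_H$ unchanged.

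First I would observe that the Reeb flow $\{\phi_R^s\}$ is a smooth \emph{strictly} contact isotopy: by definition $(\phi_R^s)^* \alpha = \alpha$, so each $\phi_R^s$ is a strictly contact diffeomorphism, and in particular $\phi_R^s \in \Diff(M,\alpha) \subset \Diff(M,\xi) \subset \Aut(M,\xi)$, with topological conformal factor $g \equiv 0$. This puts the Reeb map in the scope of the transformation law.

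Next, fix $s \in [0,1]$ and apply Theorem~\ref{thm:transformation-law} with $\phi = \phi_R^s$ and $g \equiv 0$. The transformation law yields the topological contact dynamical system
\[
\bigl( (\phi_R^s)^{-1} \circ \Phi_H \circ \phi_R^s, \; e^{-g}(H \circ \phi_R^s), \; h \circ \phi_R^s + g - g \circ (\phi_R^s)^{-1} \circ \Phi_H \circ \phi_R^s \bigr),
\]
which simplifies, using $g \equiv 0$, to $((\phi_R^s)^{-1} \circ \Phi_H \circ \phi_R^s, \; H \circ \phi_R^s, \; h \circ \phi_R^s)$. Now the hypothesis that $H$ is basic means $H_t \circ \phi_R^s = H_t$ for every $t$, so $H \circ \phi_R^s = H$ as functions on $[0,1] \times M$. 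Therefore the conjugated system has the \emph{same} topological contact Hamiltonian $H$ as the original system $(\Phi_H, H, h)$.

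By Theorem~\ref{thm:uniqueness}, a topological contact Hamiltonian uniquely determines its topological contact isotopy, so
\[
(\phi_R^s)^{-1} \circ \Phi_H \circ \phi_R^s \;=\; \Phi_H,
\]
i.e.\ $\phi_H^t \circ \phi_R^s = \phi_R^s \circ \phi_H^t$ for all $s, t \in [0,1]$, which is the desired commutation (as a bonus, one also reads off $h \circ \phi_R^s = h$, i.e.\ the topological conformal factor is Reeb-invariant as well). No serious obstacle is expected here: the only thing to be careful about is that the transformation law and the uniqueness theorem are available as stated once one checks that $\phi_R^s$ is a bona fide topological automorphism with conformal factor zero, which is immediate from strict contactness of the Reeb flow.
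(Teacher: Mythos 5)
Your proof is correct and follows exactly the paper's argument: fix $s$, note $\phi_R^s$ is a strictly contact diffeomorphism hence a topological automorphism with zero conformal factor, apply the transformation law to see the conjugated isotopy has Hamiltonian $H\circ\phi_R^s = H$, and invoke uniqueness of the topological contact isotopy. The paper's version is just more terse.
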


In the present language, this result first appeared in \cite{banyaga:ugh11} under the hypothesis that $(\Phi_H, H, 0)$ is a topological strictly contact dynamical system.

\begin{proof}
Fix a time $s$.
By hypothesis, the topological contact Hamiltonians $H$ and $H \circ \phi_R^s$ coincide.
Thus by Theorem~\ref{thm:transformation-law}, and by uniqueness of the topological contact isotopy, $(\phi_R^s)^{-1} \circ \phi_H^t \circ \phi_R^s = \phi_H^t$.
\end{proof}

Appropriate local versions of the uniqueness results in this paper hold as well \cite{ms:tcd3}.

\section{Examples of divergent sequences} \label{sec:examples}
A topological contact dynamical system $(\Phi, H, h)$ is determined by three Cauchy sequences, namely a sequence of smooth contact isotopies $\Phi_i$, the corresponding sequence $H_i$ of smooth time-dependent contact Hamiltonian functions, and the sequence of time-dependent conformal factors $h_i$ of the isotopies $\Phi_i = \Phi_{H_i}$.
The three examples discussed in this section illustrate that simultaneous convergence of any two of the three sequences does not imply the convergence of the third.
This demonstrates the necessity of all the hypotheses of Definition~\ref{dfn:topo-contact-dynamics} and of the uniqueness theorems in the previous section.
The first two examples are constructed locally on Euclidean space $\R^{2 n - 1}$ with its standard contact structure and standard contact form, and apply to any contact manifold of arbitrary dimension by Darboux's theorem.
The third example is constructed via contact Hamiltonians that depend only on time, and likewise can be constructed on any contact manifold.

In the first and most subtle example, the contact isotopies and their inverses uniformly converge to the identity, and the contact Hamiltonians generating these isotopies converge to the zero function, whereas the associated conformal factors diverge.

\begin{exa}[Divergence of conformal factors] \label{exa:divergence-factors}
Consider the standard contact form $\alpha = dz - \sum y_i \, dx_i$ on $\R^{2 n - 1}$.
The Reeb vector field is $R = \partial / \partial z$, and the contact vector field of a smooth contact Hamiltonian $H \colon [0,1] \times \R^{2 n - 1} \to \R$ is given by the identity
	\[ X_H^t = \sum_{i = 1}^{n - 1} \left( - \frac{\partial H_t}{\partial y_i} \right) \frac{\partial}{\partial x_i} + \sum_{i = 1}^{n - 1} \left( \frac{\partial H_t}{\partial x_i} + y_i \frac{\partial H_t}{\partial z} \right) \frac{\partial}{\partial y_i} + \left( H_t - \sum_{i = 1}^{n - 1} y_i \frac{\partial H_t}{\partial y_i} \right) \frac{\partial}{\partial z}. \]
For every positive integer $k > 1$, let $\eta_k \colon \R^{2 n - 2} \to [0,1]$ and $\rho_k \colon \R \to \R$ be smooth cut-off functions with the following properties.
Let $\epsilon_k$ be a sequence of positive real numbers converging to zero.
Then $\eta_k$ is a function of the variables $(x_1,y_1, \ldots , x_{n - 1}, y_{n - 1}) = (x,y) \in \R^{2 n - 2}$ that equals $1$ near the origin, and vanishes outside the ball of radius $\epsilon_k$ centered at the origin.
The function $\rho_k$ satisfies $\rho_k (0) = 0$, $\rho_k' (0) = 1$, and $\rho (z) = \pm \frac{\pi}{k^2 \ln k}$ for $| z | \ge \epsilon_k$.
By choosing $\epsilon_k$ appropriately, we can impose $| \rho'_k | \le 1$ is bounded independently of $k$.
Define a sequence of smooth contact Hamiltonians by
\begin{align} \label{eqn:divergence-factors}
	H_k (x,y,z) = \frac{\eta_k (x,y)}{k^2} \sin (k^2 \ln k \cdot \rho_k (z)).
\end{align}
As $k \to \infty$, the isotopies $\Phi_{H_k}$ and $\Phi_{H_k}^{-1}$ uniformly converge to the identity, because the Hamiltonians $H_k$ are supported in balls of shrinking radii $\sqrt{2} \epsilon_k$.
For every $k$, the Hamiltonian vector field $X_{H_k}$ vanishes at the origin, and thus the contact isotopies $\Phi_{H_k}$ and $\Phi_{H_k}^{-1}$ fix the origin $0 \in \R^{2 n - 1}$ at each time $t \in [0,1]$.
The conformal factor $h_k$ satisfies
	\[ h_{k}^t (0) = \int_0^t \left( \frac{\partial}{\partial z} H_k \right) \circ \phi_{H_k}^s (0) \, ds = \int_0^t \ln k \, ds = t \ln k, \]
and as a consequence, $\holine_k^t (0) = - t \ln k$.
In fact, $| h_k | = | \overline h_k | = \ln k$.
Finally, the two sequences $H_k$ and $\Hbar_k = - e^{- h_k} (H_k \circ \Phi_{H_k})$ of contact Hamiltonians uniformly converge to zero.
\end{exa}

In the next example, the sequences of contact Hamiltonians and conformal factors converge to zero uniformly, but the sequence of contact isotopies does not $C^0$-converge.

\begin{exa}[Divergence of contact isotopies]
Let $\epsilon_k > 0$ be a sequence of real numbers converging to zero, and $\rho$ be a smooth cut-off function, compactly supported near the origin in $\R^{2 n - 1}$, that equals $1$ on the line segment parametrized by $0 \leq x_1 \leq 1$.
Consider the sequence of autonomous Hamiltonians $H_k \colon \R^{2 n - 1} \to \R$ given by
	\[ H_k (x,y,z) = \rho (x,y,z) \cdot f_k (y_1), \]
where $f_k$ is a smooth function such that $f_k (0) = 0$, $f_k' (0) = - 1$, and $| f_k | \le \epsilon_k$.
The Hamiltonians $H_k$ and $\Hbar_k$ converge uniformly to the zero contact Hamiltonian, and the conformal factors $h_k$ and $\overline h_k$ uniformly converge to zero as well.
By construction, we have $\phi^t_{H_k} (0,\ldots,0,0,\ldots,0,0) = (t,0,\ldots,0,0,\ldots,0,0)$, and therefore $\dbar (\Phi_{H_k}, \id) \ge 1$, i.e.\ the distance to the identity is bounded from below by $1$.
By Corollary~\ref{cor:cds-unique}, the sequence $\Phi_k$ must diverge.
\end{exa}

In the final example, we reparametrize the isotopy generated by the Reeb vector field in such a way that the sequence of contact Hamiltonians does not converge, whereas the associated isotopies do $C^0$-converge.
The conformal factors are all identically zero.
This example is of a global nature, and applies to any contact manifold.

For a given contact Hamiltonian $H \colon [0,1] \times M \to \R$, generating the contact isotopy $\Phi_H = \{ \phi_H^t \}$, and any smooth function $\zeta \colon [0,1] \to [0,1]$, the reparametrized isotopy $\Phi_{H^\zeta} = \{ \phi_H^{\zeta (t)} \}$ is generated by the contact Hamiltonian $H^\zeta \colon [0,1] \times M \to \R$ defined by the formula
\begin{align} \label{eqn:ham-rep}
	H^\zeta (t,x) = \zeta' (t) H (\zeta(t), x).
\end{align}
We denote by $h^\zeta \colon [0,1] \times M \to \R$ the function given by $(\phi_{H^\zeta}^t)^* \alpha = e^{h^\zeta (t,\cdot)} \alpha$.
Clearly $h_\zeta (t,\cdot) = h (\zeta (t),\cdot)$, since $\phi_{H^\zeta}^t = \phi_H^{\zeta (t)}$.
This also follows from equation~(\ref{eqn:formula-conformal-factor}) by a simple change of variables in the integral.

\begin{exa}[Divergence of contact Hamiltonians]
To begin, consider the middle-thirds construction 
	\[ [0,1] = E_0 \supset E_1 \supset E_2 \supset \cdots\]
of the Cantor set $E = \bigcap E_k$ in the unit interval $[0,1]$.
We adhere to the presentation in \cite[7.16(b)]{rudin:rca87}.
At each stage of the construction, the set $E_k$ consists of $2^k$ disjoint intervals, and the lengths of each of these intervals equals $1/3^k$. 
For each $k = 1, 2, 3, \ldots$ define a step function $\tilde G_k$ by setting
	\[ \tilde G_k = \left( \frac{3}{2} \right)^k \cdot \chi_{E_k} \colon [0,1] \to \R ,\]
with an antiderivative $\tilde F_k \colon [0,1] \to [0,1]$ given by
	\[ \tilde F_k (t) = \int_0^t \tilde G_k(s) \, ds. \]
The sequence $\tilde F_k$ converges to the so called Cantor function $F \colon [0,1] \to [0,1]$ uniformly.
A rough lower bound for the $L^1$-difference between distinct functions $\tilde G_j$ and $\tilde G_k$ is given by $\| \tilde G_k - \tilde G_j \|_{L^1} \geq (1 - (2/3)^k ) \ge 5 / 9$ whenever $k > j$.

For each $k$, let $G_k$ be a smooth function suitably close to $\tilde G_k$ in the $L^1$-topology, so that $\| G_k - G_j \|_{L^1} \ge 1 / 2$ for distinct $j$ and $k$, and $\| G_k - \tilde G_k \|_{L^1} \to 0$.
Let $F_k$ denote as above the antiderivative with $F_k (0) = 0$.
The sequence of smooth functions $F_k \colon [0,1] \to [0,1]$ also uniformly converges to $F$, since
	\[ | F_k - F | \le | F_k - \tilde F_k | + | \tilde F_k - F | \le \| G_k - \tilde G_k \|_{L^1} + | \tilde F_k - F | \to 0. \]

Now consider the sequence $G_k$ as (space-independent) smooth contact Hamiltonians on $[0,1] \times M$ that generate smooth contact isotopies $\phi^t_{G_k}$.
The time-$t$ map satisfies $\phi^t_{G_k} = \phi_R^{F_k (t)}$, where $\{ \phi_R^t \}$ denotes the smooth contact isotopy generated by the Reeb vector field.

We make three observations.
The conformal factors $g_k$ are all identically zero since each function $G_k$ is basic.
Moreover, the sequence $\{ \phi^t_{G_k} \}$ of strictly contact isotopies $C^0$-converges to $\{ \phi_R^{F (t)} \}$, because $| F_k - F | \to 0$, as $k \to \infty$.
Finally, for every $j \neq k$, the contact norms satisfy $\| G_k - G_j \|_\alpha = \| G_k - G_j \|_{L^1} \ge 1 / 2$, and thus the contact Hamiltonians $G_k$ do not converge.
\end{exa}

It suffices in the last example to take a sequence of smooth functions $F_k$ on the unit interval that converges uniformly, but whose derivatives do not converge in $L^1$.

\section{Group properties} \label{sec:topo-group}
In order to simplify our subsequent arguments regarding Cauchy sequences with respect to the contact distance, we prove a useful lemma.

\begin{lem} \label{lem:cauchy}
Let $H_i$ and $h_i \colon [0,1] \times M \to \R$ be sequences of $L^\oneinfty$-functions and continuous functions, respectively, and $\Phi_i \colon [0,1] \times M \to M$ be a sequence of continuous isotopies of homeomorphisms.
Suppose that $d_M (\Phi_i,\Phi_j) \to 0$, $| h_i - h_j | \to 0$, and $\| H_i - H_j \|_\alpha \to 0$ as $i, j \to \infty$.
Then
	\[ \left\| e^{-h_i} (H_i \circ \Phi_i) - e^{-h_j} (H_j \circ \Phi_j) \right\|_\alpha \to 0, \]
as $i, j \to \infty$.
If $\Phi$ denotes the uniform limit of the sequence $\Phi_i$, $h$ the uniform limit of the sequence $h_i$, and $H$ the $L^\oneinfty$-limit of the sequence $H_i$, then the functions $e^{-h_i} (H_i \circ \Phi_i)$ converge to $e^{-h} (H \circ \Phi)$ in the metric induced by the norm $\| \cdot \|_\alpha$.
\end{lem}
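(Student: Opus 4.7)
The plan is to prove the convergence statement $e^{-h_i}(H_i \circ \Phi_i) \to e^{-h}(H \circ \Phi)$ in the norm $\|\cdot\|$, from which the Cauchy assertion follows immediately by the triangle inequality. By Lemma~\ref{lem:max-norm}, it is equivalent and much more convenient to work with the time-integrated spatial sup norm $\int_0^1 \max_{x \in M} |F(t,x)|\,dt$, because this equivalent norm behaves well under composition: for any homeomorphism $\phi$ of the compact manifold $M$ and any $F \in C^0(M)$, one has $\max_{x} |F(\phi(x))| = \max_y |F(y)|$, whereas the mean-value term $c(F_t)$ in the definition~(\ref{eqn:contact-length}) of $\|\cdot\|$ does \emph{not} transform well under arbitrary homeomorphisms. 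I telescope the difference as
\[
e^{-h_i}(H_i \circ \Phi_i) - e^{-h}(H \circ \Phi) = (e^{-h_i} - e^{-h})(H_i \circ \Phi_i) + e^{-h}\bigl((H_i - H) \circ \Phi_i\bigr) + e^{-h}\bigl(H \circ \Phi_i - H \circ \Phi\bigr),
\]
and handle the three summands separately.

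The first two summands are routine. For the first, the uniform convergence $h_i \to h$ on $[0,1] \times M$ forces $|e^{-h_i} - e^{-h}|_\infty \to 0$, while $\|H_i\|$ is bounded because the sequence is Cauchy; multiplying and integrating over $t$ gives the conclusion. For the second, composition with $\phi_i^t$ preserves the spatial sup norm, so its contribution in the equivalent norm is bounded by a constant multiple of $|e^{-h}|_\infty \cdot \|H_i - H\|$, which tends to zero by hypothesis.

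The main obstacle is the third summand $e^{-h}(H \circ \Phi_i - H \circ \Phi)$, because $H$ is only an $L^\oneinfty$-function and thus carries no a priori modulus of continuity in the space variable uniformly in $t$. The strategy is a standard $\varepsilon/3$ argument using density of smooth functions. Given $\varepsilon > 0$, choose a smooth approximant $H^\varepsilon$ (for example, any $H_k$ with $k$ sufficiently large) satisfying $\|H - H^\varepsilon\| < \varepsilon$. Then for almost every $t$ and every $x \in M$,
\[
|H_t(\phi_i^t(x)) - H_t(\phi^t(x))| \leq |H_t - H^\varepsilon_t|_\infty + |H^\varepsilon_t(\phi_i^t(x)) - H^\varepsilon_t(\phi^t(x))| + |H^\varepsilon_t - H_t|_\infty.
\]
Taking $\max_x$ and integrating in $t$, the first and third terms contribute at most $2\|H - H^\varepsilon\| < 2\varepsilon$. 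The remaining piece $\int_0^1 |H^\varepsilon_t \circ \phi_i^t - H^\varepsilon_t \circ \phi^t|_\infty\, dt$ tends to zero as $i \to \infty$, by the uniform continuity of the smooth function $H^\varepsilon$ on the compact set $[0,1] \times M$ combined with the uniform convergence $\Phi_i \to \Phi$. Taking $\limsup_i$ and then letting $\varepsilon \to 0$ makes the third summand vanish in the limit, which completes the argument.
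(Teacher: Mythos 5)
Your proof is correct, but it follows a genuinely different route from the paper's. The paper handles the key difficulty by citing Proposition~2.3.9 of \cite{mueller:ghh08}, which already establishes (in the special case $h_i \equiv 0$) that the compositions $H_i \circ \Phi_i$ form a Cauchy sequence converging to $H \circ \Phi$; the paper then telescopes into just two terms, peeling off the factors $e^{-h_i}$, $e^{-h_j}$ via the triangle inequality and Lemma~\ref{lem:max-norm}. You instead reprove that core fact from scratch: your three-term telescoping separates the effects of $h_i \to h$, $H_i \to H$, and $\Phi_i \to \Phi$, and the last (genuinely hard) term is handled by an $\varepsilon/3$ argument with a smooth approximant $H^\varepsilon$, exploiting the uniform continuity of $H^\varepsilon$ on the compact set $[0,1]\times M$. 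This buys self-containedness at the cost of some extra length, and it is the argument the paper is implicitly relying on via the citation.

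One small imprecision worth flagging: the parenthetical suggestion that $H^\varepsilon$ could be taken to be ``any $H_k$ with $k$ sufficiently large'' does not quite fit the hypotheses of the lemma, since the $H_k$ are only assumed to be $L^\oneinfty$-functions, hence integrable in $t$ rather than jointly continuous on $[0,1]\times M$, so the uniform continuity step would fail for them. The remedy is already implicit in your phrase ``smooth approximant'': smooth functions are dense in $L^\oneinfty$ by definition, so one should simply pick a genuinely smooth $H^\varepsilon$ with $\|H - H^\varepsilon\| < \varepsilon$ and drop the parenthetical. With that correction the argument is complete.
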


\begin{proof}
In the special case $h_i = 0$ for all $i$, the statement of the lemma is verbatim the same as Proposition~2.3.9 in \cite{mueller:ghh08}, with the exception that the norm $\| \cdot \|$ there is defined for normalized functions, and thus is missing the average value term $c_\alpha$ that is present in equation~(\ref{eqn:contact-length}).
Arguing as in Lemma~\ref{lem:max-norm}, this does not affect the proof.
That shows the sequence $H_i \circ \Phi_i$ is Cauchy and converges to the function $H \circ \Phi$.
In particular, there exists a constant $c$ such that $\| H_i \circ \Phi_i \|_\alpha \le c$ for all $i$.
By choosing $c$ larger if necessary, $| h_i | \le c$ for all $i$.
To prove the lemma in full generality, we apply the triangle inequality and again Lemma~\ref{lem:max-norm}, and obtain
\begin{align*}
	{} & \| e^{-h_i} (H_i \circ \Phi_i) - e^{-h_j} (H_j \circ \Phi_j) \|_\alpha \\
	& \quad \le \| e^{-h_i} (H_i \circ \Phi_i) - e^{-h_j} (H_i \circ \Phi_i) \|_\alpha + \| e^{-h_j} (H_i \circ \Phi_i) - e^{-h_j} (H_j \circ \Phi_j) \|_\alpha \\
	& \quad \le 3 | e^{- h_i} - e^{- h_j} | \cdot \| H_i \circ \Phi_i \|_\alpha + 3 | e^{- h_j} | \cdot \| H_i \circ \Phi_i - H_j \circ \Phi_j \|_\alpha \\
	& \quad \le 3 c \cdot | e^{- h_i} - e^{- h_j} | + 3 e^c \cdot \| H_i \circ \Phi_i - H_j \circ \Phi_j \|_\alpha
\end{align*}
which converges to zero as $i, j \to \infty$.
\end{proof}

\begin{proof}[Proof of Theorem~\ref{thm:topo-group}]
Suppose $(\Phi_H,H,h)$ and $(\Phi_F,F,f)$ are topological contact dynamical systems with respect to a contact form $\alpha$.
By definition, one can find sequences $(\Phi_{H_i},H_i,h_i) \to (\Phi_H,H,h)$ and $(\Phi_{F_i},F_i,f_i) \to (\Phi_F,F,f)$ in the $d_\alpha$-contact metric, where $H_i$ and $F_i \colon [0,1] \times M \to \R$ are smooth contact Hamiltonians, $\Phi_{H_i}$ and $\Phi_{F_i}$ the corresponding contact isotopies, and $h_i$ and $f_i \colon [0,1] \times M \to \R$ the smooth functions defined by $(\phi_{H_i}^t)^* \alpha = e^{h_i (t,\cdot)} \alpha$ and $(\phi_{F_i}^t)^* \alpha = e^{f_i (t,\cdot)} \alpha$.
Then $\Phi_{H_i}^{-1} \circ \Phi_{F_i} \to \Phi_H^{-1} \circ \Phi_F$ in the $C^0$-metric, and consequently 
	\[ \overline h_i \# f_i= -h_i \circ \Phi_{H_i}^{-1} \circ \Phi_{F_i} + f_i \to -h \circ \Phi_H^{-1} \circ \Phi_F + f \]
uniformly over $(t,x) \in [0,1] \times M$, where we have used Lemma~\ref{lem:conformal-calculus}.
Moreover,
\begin{align*}
	{} & \| \Hbar_i \# F_i - \Hbar_j \# F_j \|_\alpha \\
	& \quad = \| e^{-h_i} ((F_i - H_i) \circ \Phi_{H_i}) - e^{-h_j} ((F_j - H_j) \circ \Phi_{H_j}) \|_\alpha \\
	& \quad \le \| e^{-h_i} (F_i \circ \Phi_{H_i}) - e^{-h_j} (F_j \circ \Phi_{H_j}) \|_\alpha + \| e^{-h_j} (H_j \circ \Phi_{H_j}) - e^{-h_i} (H_i \circ \Phi_{H_i}) \|_\alpha.
\end{align*}
By Lemma~\ref{lem:cauchy}, the expression in the last line converges to zero as $i, j \to \infty$.
Thus $(\Phi_{H_i}^{-1} \circ \Phi_{F_i},\Hbar_i \# F_i,\overline h_i \#f_i)$ is Cauchy in the contact metric, and its limit is the topological contact isotopy $(\Phi_H^{-1} \circ \Phi_F,\Hbar \# F,\holine \# f)$.
This does not depend on the choices of Cauchy sequences converging to $(\Phi_H,H,h)$ and $(\Phi_F,F,f)$.
In particular, $(\id,0,0)$ is the identity in $\TC (M,\alpha)$, and we have a well-defined composition and inverse.
Associativity of composition in the space $\TC (M,\alpha)$ is easily verified.
Thus we have shown that $\TC (M,\alpha)$ forms a group, and $\C (M,\alpha)$ is a subgroup.
Verifying that composition and inverse are continuous, or equivalently, that the map
	\[ ((\Phi_H, H, h),(\Phi_F, F, f)) \mapsto (\Phi_H^{-1} \circ \Phi_F, \Hbar \# F, \holine \# f) \]
is continuous, is a similar application of Lemma~\ref{lem:cauchy}.
\end{proof}

\begin{proof}[Proof of Theorem~\ref{thm:transformation-law}]
By definition, there exists a sequence of smooth contact dynamical systems $(\Phi_{H_i}, H_i, h_i)$ that converges to the topological contact dynamical system $(\Phi, H, h)$ in the contact metric, and a sequence of contact diffeomorphisms $\phi_i$ that $C^0$-converges to the homeomorphism $\phi$, and such that the sequence of smooth functions $g_i$ defined by $\phi_i^* \alpha = e^{g_i} \alpha$ converges uniformly to the continuous function $g$.
Recall that $\phi_i^{-1} \circ \Phi_{H_i} \circ \phi_i$ is generated by the contact Hamiltonian $e^{- g_i} (H_i \circ \phi_i)$, and
	\[ (\phi_i^{-1} \circ \phi_{H_i}^t \circ \phi_i)^* \alpha = e^{h_i \circ \phi_i + g_i - g_i \circ (\phi_i^{-1} \circ \Phi_{H_i} \circ \phi_i)} \alpha. \]
It therefore suffices to prove that
	\[ (\phi_i^{-1} \circ \Phi_{H_i} \circ \phi_i, e^{-g_i} (H_i \circ \phi_i), h_i \circ \phi_i + g_i - g_i \circ (\phi_i^{-1} \circ \Phi_{H_i} \circ \phi_i)) \]
converges in the contact metric to the topological contact dynamical system (\ref{eqn:transformation-law}).
The $C^0$-convergence of the isotopies and conformal factors is immediate.
On the other hand, by Lemma~\ref{lem:cauchy}
	\[ \| e^{- g_i} (H_i \circ \phi_i) - e^{- g_j} (H_j \circ \phi_j) \|_\alpha \to 0, \]
as $i, j \to \infty$, and the limit equals $e^{- g} (H \circ \phi)$.
\end{proof}

\section{A bi-invariant metric on the group of strictly contact diffeomorphisms} \label{sec:bi-invariant-metric}
The results of this section concern smooth strictly contact isotopies and their time-one maps.
Recall that the smooth contact Hamiltonian of a smooth strictly contact isotopy is invariant under the Reeb flow, and such a Hamiltonian is called basic.

The discovery of the Hofer metric prompted Banyaga and Donato to search for other classical diffeomorphism groups supporting a bi-invariant metric.
They studied the prequantization space of an integral symplectic manifold, and showed that under a certain topological assumption, the identity component $\Diff_0(M,\alpha)$ of the group of strictly contact diffeomorphisms indeed supports such a metric \cite{banyaga:lci06}.
A prequantization space consists of a contact manifold $M$, supporting a \emph{regular} contact form $\alpha$, whose Reeb flow induces a free $S^1$-action, and the quotient is an integral symplectic manifold $(B,\omega)$.
Their construction utilizes the non-degeneracy of the Hofer metric on the base $B$.
We extend their result to all contact manifolds, with no restrictions on the contact form $\alpha$.
The proof follows from Theorem~\ref{thm:energy-capacity-ineq}.

We first recall Banyaga and Donato's construction.
For a smooth strictly contact isotopy $\Phi_H$ generated by a smooth basic contact Hamiltonian $H$, its `length' is defined by
\begin{align} \label{eqn:banyaga-donato-length}
	\ell_{\rm BD}^\alpha (\Phi_H) = \int_0^1 \osc (H_t) dt + \left| \int_0^1 c_\alpha (H_t) dt \right|,
\end{align}
where again $\osc$ is the oscillation of a function on $M$, and $c_\alpha$ its average value (\ref{eqn:average-value}) with respect to the canonical volume form $\nu_\alpha = \alpha \wedge (d\alpha)^{n - 1}$.
A standard argument (see e.g.\ \cite{banyaga:scd97}) shows that equation~(\ref{eqn:average-value}) defines a surjective homomorphism on the universal covering space of $\Diff_0(M,\alpha)$,
	\[ c_\alpha \colon \widetilde{\Diff_0} (M,\alpha) \to \R. \]
The placement of the absolute value makes equation~(\ref{eqn:banyaga-donato-length}) differ from equation~(\ref{eqn:contact-length}), and $\ell_{\rm BD}^\alpha (\Phi_H) \leq \ell_\alpha (\Phi_H) = \| H \|_\alpha$.
We prefer the latter, because the homomorphism $c_\alpha$ vanishes on every loop $\Phi_H$ of strictly contact diffeomorphisms that is generated by a smooth basic Hamiltonian of the form $H (t,x) = f (t)$, with $\int_0^1 f (t) \, dt = 0$.
These are the only such isotopies.
In short, in the Reeb direction, the Banyaga--Donato length measures the net displacement after time $1$.

The \emph{contact energy} of a strictly contact diffeomorphism $\phi \in \Diff_0 (M,\alpha)$ is
	\[ E (\phi) = \inf_{H \mapsto \phi} \ell_{\rm BD}^\alpha (\Phi),\]
where the infimum is taken over all smooth basic contact Hamiltonians that generate the time-one map $\phi$.
The group structure on basic Hamiltonians and the transformation law imply the symmetry, triangle inequality, and invariance properties of the contact energy.

\begin{lem}{\cite[Lemma~1]{banyaga:lci06}}
Let $\phi$, $\psi \in \Diff_0 (M,\alpha)$, and $\theta \in \Diff (M,\alpha)$ be strictly contact diffeomorphisms.
The contact energy satisfies
\begin{align*}
	& E (\phi^{-1}) = E(\phi) & (\text{symmetry}), \\
	& E (\phi \circ \psi) \leq E(\phi) + E(\psi) & (\text{triangle \ inequality),} \\
	& E (\theta^{-1} \circ \phi \circ \theta) = E (\phi) & (\text{invariance}).
\end{align*}
\end{lem}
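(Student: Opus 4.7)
My plan is to prove the three properties by choosing an arbitrary basic Hamiltonian representing each diffeomorphism, applying the explicit formulas from Lemma~\ref{lem:ham-gp-str} with vanishing conformal factors, and then using the crucial fact from section~\ref{sec:contact-geometry} that strictly contact diffeomorphisms preserve the volume form $\nu_\alpha = \alpha \wedge (d\alpha)^{n-1}$. That preservation makes both $\osc(\cdot)$ and $c(\cdot)$ invariant under pullback by any $\phi_H^t$ and any $\theta \in \Diff(M,\alpha)$, which is what drives all three identities.

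For symmetry, let $H$ be any basic Hamiltonian generating $\phi$. Since the conformal factor vanishes, Lemma~\ref{lem:ham-gp-str} gives $\overline{H}_t = -H_t \circ \phi_H^t$, which is again basic (because $\phi_H^t$ is strictly contact, hence $(\phi_H^t)_*R = R$). Because $\phi_H^t$ preserves $\nu_\alpha$, one has $\osc(\overline{H}_t) = \osc(H_t)$ and $c(\overline{H}_t) = -c(H_t)$, so $\ell_{\rm BD}(\Phi_{\overline H}) = \ell_{\rm BD}(\Phi_H)$. Taking the infimum and noting $H \leftrightarrow \overline H$ is a bijection between generators of $\phi$ and of $\phi^{-1}$ yields $E(\phi^{-1}) = E(\phi)$.

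For the triangle inequality, pick basic $H \mapsto \phi$ and $F \mapsto \psi$. Then $(H\#F)_t = H_t + F_t \circ (\phi_H^t)^{-1}$ is basic and generates $\phi \circ \psi$. By subadditivity of oscillation, $\osc((H\#F)_t) \le \osc(H_t) + \osc(F_t \circ (\phi_H^t)^{-1}) = \osc(H_t) + \osc(F_t)$; by linearity and volume-preservation, $c((H\#F)_t) = c(H_t) + c(F_t)$. Integrating over $t$ and applying the triangle inequality in $\R$ to the average-value term gives $\ell_{\rm BD}(\Phi_{H\#F}) \le \ell_{\rm BD}(\Phi_H) + \ell_{\rm BD}(\Phi_F)$. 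Taking the infimum over all such $H$ and $F$ separately yields $E(\phi\circ\psi) \le E(\phi) + E(\psi)$.

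For invariance, with $H \mapsto \phi$ basic and $\theta \in \Diff(M,\alpha)$, Lemma~\ref{lem:ham-gp-str} (using $g=0$) shows that $K_t = H_t \circ \theta$ generates $\theta^{-1} \circ \Phi_H \circ \theta$; and $K$ is basic since $\theta_* R = R$. Because $\theta$ is a diffeomorphism preserving $\nu_\alpha$, $\osc(K_t) = \osc(H_t)$ and $c(K_t) = c(H_t)$, so $\ell_{\rm BD}(\Phi_K) = \ell_{\rm BD}(\Phi_H)$. Hence $E(\theta^{-1}\circ\phi\circ\theta) \le E(\phi)$; reversing the roles of $\phi$ and $\theta^{-1}\circ\phi\circ\theta$ (conjugating by $\theta^{-1}$) gives the reverse inequality. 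I expect no real obstacle here: the only point requiring care is that basicness is preserved under $\#$, inversion, and conjugation by strictly contact diffeomorphisms, which is immediate from $(\phi_H^t)_*R = R$ and $\theta_*R = R$.
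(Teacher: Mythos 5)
Your proof is correct and takes exactly the approach the paper indicates: the paper states only that "the group structure on basic Hamiltonians and the transformation law imply the symmetry, triangle inequality, and invariance properties of the contact energy," and you have filled in precisely those details using Lemma~\ref{lem:ham-gp-str} with vanishing conformal factors, the invariance of $\osc$ and $c$ under volume-preserving diffeomorphisms, and the preservation of basicness under inversion, $\#$, and conjugation via $(\phi_H^t)_*R = R$ and $\theta_*R = R$.
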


The proof that the contact energy of $\phi$ vanishes if and only if it is the identity is more difficult.
For a certain class of regular contact manifolds however, Banyaga and Donato demonstrate the following theorem.

\begin{thm}[Non-degeneracy of Banyaga--Donato metric]{\cite[Theorem~1]{banyaga:lci06}} \label{thm:banyaga-donato}
Suppose that $(M,\alpha)$ is a closed and connected regular contact manifold satisfying
\begin{align}\label{eqn:condition-c}
	{\rm Image} \left( c_\alpha \colon \pi_1(\Diff (M,\alpha)) \to \R \right) = \Z.
\end{align}
Then $E (\phi) = 0$ if and only if $\phi = \id$.
\end{thm}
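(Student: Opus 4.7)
The plan is to reduce the problem to Hamiltonian dynamics on the base of the Boothby--Wang fibration $\pi\colon M \to B = M/S^1$, apply the nondegeneracy of the Hofer metric on $B$, and then use the integrality condition~(\ref{eqn:condition-c}) to eliminate the residual Reeb ambiguity. Regularity of $\alpha$ gives a free $S^1$-action by the Reeb flow on $M$, with quotient an integral symplectic manifold $(B,\omega)$ satisfying $\pi^*\omega = d\alpha$. A basic function $H_t$ on $M$ is $S^1$-invariant and descends to a function $\bar H_t$ on $B$, with $\osc (H_t) = \osc (\bar H_t)$ and $c(H_t)$ equal to a positive multiple of the average of $\bar H_t$ with respect to $\omega^{n-1}/(n-1)!$. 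Since a smooth strictly contact isotopy $\Phi_H$ commutes with the Reeb flow, it descends to a smooth Hamiltonian isotopy $\bar \Phi$ of $(B,\omega)$ generated by $\bar H$.

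Suppose now that $E (\phi) = 0$, and choose a sequence of basic Hamiltonians $H_k \mapsto \phi$ with $\ell_{\rm BD} (\Phi_{H_k}) \to 0$. Both summands in~(\ref{eqn:banyaga-donato-length}) then tend separately to zero. The first summand is precisely the Hofer length of the descended isotopy, whose time-one map is the projection $\bar\phi$ of $\phi$. By nondegeneracy of Hofer's metric on $(B,\omega)$, the Hofer norm of $\bar\phi$ vanishes, so $\bar\phi = \id_B$. Hence $\phi$ preserves every Reeb orbit and acts on each as a rotation. Connectedness of $M$, together with continuity of the rotation angle, forces a single $t_0 \in \R/\Z$ with $\phi = \phi_R^{t_0}$, where $\{ \phi_R^t \}$ denotes the Reeb flow.

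It remains to show $t_0 = 0$. Each smooth isotopy $\Phi_{H_k}$ represents a class in $\widetilde{\Diff_0} (M,\alpha)$ lifting $\phi$, on which the homomorphism $c$ evaluates to $\int_0^1 c ((H_k)_t) \, dt$. The natural lift of $\phi = \phi_R^{t_0}$ given by the Reeb isotopy $\{ \phi_R^{t_0 s} \}_{s \in [0,1]}$ is generated by the constant basic Hamiltonian $t_0$, on which $c$ evaluates to $t_0$. Any two lifts of $\phi$ differ by an element of $\pi_1 (\Diff_0 (M,\alpha))$, so hypothesis~(\ref{eqn:condition-c}) makes their $c$-values differ by an integer. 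Therefore $\int_0^1 c ((H_k)_t) \, dt \equiv t_0 \pmod{\Z}$, and since the left side tends to zero, $t_0 \in \Z$ forces $\phi = \phi_R^{t_0} = \id$.

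The principal obstacle is the final identification: verifying that the homomorphism $c$ on lifts in $\widetilde{\Diff_0} (M,\alpha)$ really detects the Reeb rotation angle modulo integers. This is essentially the Boothby--Wang statement that the full Reeb circle action is a loop in $\Diff (M,\alpha)$ whose $c$-value is a generator of the image of $c$ on $\pi_1$, and hypothesis~(\ref{eqn:condition-c}) is precisely what promotes an a priori congruence modulo some dense subgroup of $\R$ to the needed congruence modulo $\Z$. A secondary technical point is the descent in Step~1, where one must check that the contact vector field $X_H$ of a basic $H$ is $\pi$-related to the Hamiltonian vector field of $\bar H$ on $(B,\omega)$, which uses both $R . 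H = 0$ and $\pi^* \omega = d\alpha$.
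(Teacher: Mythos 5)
Your proposal reconstructs the argument of the cited source \cite{banyaga:lci06} rather than the one this paper uses, and the paper in fact does not reprove Theorem~\ref{thm:banyaga-donato} at all: it cites it, observes that the Banyaga--Donato proof ``utilizes the nondegeneracy of the Hofer metric on the base $B$,'' and then establishes the strictly stronger Theorem~\ref{thm:nondegen}, which drops both the regularity of $\alpha$ and condition~(\ref{eqn:condition-c}). The paper's proof of nondegeneracy is a three-line consequence of the contact energy-capacity inequality of Theorem~\ref{thm:energy-capacity-ineq}: given $\phi \neq \id$ and basic Hamiltonians $H_i \mapsto \phi$ with $\|H_i\| \to 0$, the conformal factors all vanish, so the inequality $\|H_i\| \ge C e^{-|h_i|} = C > 0$ immediately contradicts $\|H_i\| \to 0$.

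Your route --- descend basic Hamiltonians to $(B,\omega)$ through the Boothby--Wang fibration, invoke nondegeneracy of Hofer's metric on $B$ to force $\bar\phi = \id_B$, deduce $\phi = \phi_R^{t_0}$, then use the integrality hypothesis~(\ref{eqn:condition-c}) to kill $t_0$ --- is structurally sound and is essentially the original argument of Banyaga and Donato. The trade-off is clear: your approach reduces to the hard symplectic fact (Hofer nondegeneracy on the base) but genuinely uses both extra hypotheses, whereas the paper's approach imports the hard fact at a different point (the Lalonde--McDuff energy-capacity inequality on the symplectization) and thereby needs neither regularity nor~(\ref{eqn:condition-c}). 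You are right to flag the one remaining technical point in Step~4: the argument needs the Reeb loop in $\pi_1(\Diff(M,\alpha))$ to have $c$-value $1$ (equivalently, the Reeb period to equal $1$ under the prequantization normalization $d\alpha = \pi^*\omega$ with $[\omega]$ integral); without this, $t_0 \in \Z$ would not yet force $\phi = \id$. This is exactly the role condition~(\ref{eqn:condition-c}) plays, and it is the content of the lemma in \cite{banyaga:lci06} that your proposal labels ``the principal obstacle.'' Also note that the paper later shows (via the lemma just before Theorem~\ref{thm:nondegen}) that $E$ and $\underline{E}$ coincide, so one may equivalently work with the contact length $\|\cdot\|$ throughout; your argument uses $\ell_{\rm BD}$ directly, which is fine.
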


By Theorem~\ref{thm:banyaga-donato}, the map $\Diff_0 (M,\alpha) \times \Diff_0(M,\alpha) \to \R$ given by $(\phi,\psi) \mapsto E (\phi^{-1} \circ \psi)$ defines a bi-invariant metric on $\Diff_0 (M,\alpha)$, provided $(M, \alpha)$ is a closed regular contact manifold that satisfies condition~(\ref{eqn:condition-c}).

We define an a priori different contact energy of a diffeomorphism $\phi$ in $\Diff_0(M,\alpha)$, by minimizing the contact length of equation~(\ref{eqn:contact-length}) over all strictly contact isotopies $\Phi_H$ whose time-one map equals $\phi$,
	\[ \underline{E} (\phi)  = \inf_{H \mapsto \phi} \| H \|_\alpha, \]
and prove a surprising fact.

\begin{lem}
Let $(M,\xi)$ be a contact manifold with a contact form $\alpha$.
Every strictly contact diffeomorphism $\phi \in \Diff_0 (M,\alpha)$ satisfies the identity $E (\phi) = \underline{E} (\phi)$.
\end{lem}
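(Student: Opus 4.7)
The inequality $E(\phi) \le \underline{E}(\phi)$ will follow immediately from the pointwise comparison $\ell_{\rm BD}(\Phi_H) \le \|H\|$, which in turn comes from $\bigl|\int_0^1 c(H_t)\,dt\bigr| \le \int_0^1 |c(H_t)|\,dt$. The content is the reverse inequality, and I plan to prove it by exhibiting, for each smooth basic Hamiltonian $H$ with $H \mapsto \phi$, another such Hamiltonian $F$ with $F \mapsto \phi$ and $\|F\| = \ell_{\rm BD}(\Phi_H)$; taking infima then yields $\underline{E}(\phi) \le E(\phi)$.

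The key idea is to absorb the sign changes of $t \mapsto c(H_t)$ using a spatially constant perturbation. Set $A = \int_0^1 c(H_s)\,ds$ and $g(t) = A - c(H_t)$, so that $\int_0^1 g(t)\,dt = 0$. Define $F(t,x) = H(t,x) + g(t)$. Since $g$ is independent of the space variable, $F$ remains basic, $\osc(F_t) = \osc(H_t)$, and $c(F_t) = A$ is now a constant function of $t$. Consequently
\[
\|F\| = \int_0^1 \osc(F_t)\,dt + \int_0^1 |c(F_t)|\,dt = \int_0^1 \osc(H_t)\,dt + |A| = \ell_{\rm BD}(\Phi_H),
\]
exactly as required.

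The only nontrivial verification is that $F$ still has time-one map $\phi$. For this I will view $G(t,x) = g(t)$ as a basic contact Hamiltonian whose contact vector field is $g(t)R$, so that $\phi_G^t = \phi_R^{\int_0^t g(s)\,ds}$. The condition $\int_0^1 g = 0$ then guarantees $\phi_G^1 = \phi_R^0 = \id$, i.e.\ $\Phi_G$ is a loop at the identity. Applying the contact Hamiltonian group structure of Lemma~\ref{lem:ham-gp-str} with conformal factor $h \equiv 0$, and using that $G_t$ is spatially constant so that $G_t \circ (\phi_H^t)^{-1} = G_t$, we obtain $H \# G = H + g = F$. Hence $\phi_F^1 = \phi_H^1 \circ \phi_G^1 = \phi \circ \id = \phi$. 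The subtlest point of the argument is precisely this loop property of $\Phi_G$: absorbing the time-average of $c(H_t)$ in a \emph{spatially constant} way allows one to eliminate all sign changes of $c$ without altering the time-one map, which is what collapses the gap between the two norms.
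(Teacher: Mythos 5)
Your proof is correct and follows essentially the same route as the paper: both define the spatially constant, time-dependent shift $c - c_t$ (your $g$, the paper's $F$), observe it generates a loop through the identity (a reparameterization of the Reeb flow), and use Lemma~\ref{lem:ham-gp-str} to conclude that the shifted Hamiltonian still has time-one map $\phi$ while achieving $\|H + g\| = \ell_{\rm BD}(\Phi_H)$.
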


\begin{proof}
Given a smooth basic contact Hamiltonian $H \colon [0,1] \times M \to \R$, let $c_t = c_\alpha (H_t)$ be the average value of $H$ at time $t$, and $c = \int_0^1 c_t \, dt$ be the time-average of these averages.
Write $F_t = c - c_t$.
We claim that $\| H \#F \|_\alpha = \ell_{\rm BD}^\alpha (\Phi_H)$.
Indeed, denote by $\{ \phi_R^t \}$ the Reeb flow.
The smooth basic contact Hamiltonian $F$ generates a loop $\{ \phi_F^t \}$ of strictly contact diffeomorphisms, which is a reparametrization of the Reeb flow
	\[ \phi_F^t = \phi_R^{\int_0^t (c - c_s) ds}. \]
Because $F_t$ is independent of $x \in M$, $(H \# F)_t = H_t + F_t$, and $\osc (H_t + F_t) = \osc (H_t)$.
Furthermore
\begin{align} \label{eqn:avg-trick}
 	\int_0^1 \left| c_\alpha ((H\#F)_t) \right| \, dt = \int_0^1 \left| \frac{1}{\int_M \nu_\alpha} \int_M (H_t + F_t) \nu_\alpha \right| dt = | c |,
\end{align}
proving the claim.
Since $\phi_F^1 = \phi_R^0 = \id$, we have $\phi_{H \# F}^1 = \phi_H^1 \circ \phi_F^1 = \phi_H^1$.
Thus
	\[ \inf_{H \mapsto \phi} \ell_{\rm BD}^\alpha (\Phi_H) \geq \inf_{H \mapsto \phi} \| H \|_\alpha. \]
The reverse inequality is obvious.
\end{proof}

A simpler and also more general proof of non-degeneracy follows from our contact energy-capacity inequality in Theorem~\ref{thm:energy-capacity-ineq}.
There are no restrictions on the topology of $M$ or on the contact form $\alpha$.

\begin{thm}[Non-degeneracy of Banyaga--Donato metric] \label{thm:nondegen}
Let $\phi$ be a strictly contact diffeomorphism in $\Diff_0 (M,\alpha)$.
Then $E (\phi ) = 0$ if and only if $\phi = \id$.
\end{thm}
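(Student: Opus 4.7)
The plan is to combine the previous lemma, which identifies $E(\phi)$ with $\underline{E}(\phi) = \inf_{H \mapsto \phi} \| H \|$ over basic Hamiltonians generating $\phi$ via a strictly contact isotopy, with the contact energy-capacity inequality of Theorem~\ref{thm:energy-capacity-ineq}. The crucial simplification in the strictly contact setting is that the conformal factor of any such isotopy is identically zero, so the factor $e^{-|h|}$ in the energy-capacity inequality collapses to $1$.

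First I would dispose of the trivial direction: if $\phi = \id$, then the zero Hamiltonian generates $\phi$ through the constant isotopy at the identity, so $\ell_{\rm BD}(\Phi_0) = 0$ and hence $E(\id) = 0$.

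For the nontrivial direction, I would proceed by contrapositive. Suppose $\phi \neq \id$, and choose a point $p \in M$ with $\phi(p) \neq p$. By continuity of $\phi$, there exists a sufficiently small closed ball $B \subset \Int\, M$ centered at $p$, contained in a Darboux chart, such that $\phi(B) \cap B = \emptyset$. Now let $H \colon [0,1] \times M \to \R$ be any smooth basic contact Hamiltonian with $H \mapsto \phi$, and let $\Phi_H = \{\phi_H^t\}$ be the associated strictly contact isotopy. Since $\phi_H^1 = \phi$ displaces the compact set $B$, and since $\Phi_H$ is strictly contact so that its conformal factor $h$ vanishes identically, Theorem~\ref{thm:energy-capacity-ineq} produces a constant $C = C(B,\alpha) > 0$, independent of $H$, such that
\[
\| H \| \ge C e^{-|h|} = C > 0.
\]
Taking the infimum over all basic Hamiltonians $H$ with $H \mapsto \phi$ and invoking the previous lemma gives $E(\phi) = \underline{E}(\phi) \ge C > 0$, contradicting $E(\phi) = 0$.

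I do not expect a serious obstacle: the contact energy-capacity inequality has already been established, and the identification $E = \underline{E}$ was the real content of the preceding lemma, so the remaining argument is essentially a one-line application. The only point requiring mild care is to observe that the constant $C$ from Theorem~\ref{thm:energy-capacity-ineq} depends only on the displaced set $B$ (through the symplectic capacity of its lift in the symplectization) and on the contact form $\alpha$, but is independent of the particular basic Hamiltonian chosen to generate $\phi$, which is exactly what is needed to pass to the infimum.
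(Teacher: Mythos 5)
Your proposal is correct and takes essentially the same route as the paper: reduce to a lower bound on $\|H\|$ via the preceding lemma $E(\phi)=\underline{E}(\phi)$, observe that strict contactness forces $h\equiv 0$ so the factor $e^{-|h|}$ in the contact energy-capacity inequality is $1$, and then apply Theorem~\ref{thm:energy-capacity-ineq} with a small ball displaced by $\phi$. The paper phrases this as a contradiction with a sequence $\|H_i\|\to 0$ rather than by contrapositive, but the content is identical, and your explicit remark that the constant $C$ depends only on the displaced ball and $\alpha$ (not on $H$) is exactly the point needed to pass to the infimum.
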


This theorem proves that the Banyaga--Donato pseudo-metric is non-degenerate for every $(M,\alpha)$, and completes the proof of Theorem~\ref{thm:non-degeneracy}.
Similarly, one defines a bi-invariant metric on any component of the group $\Diff (M,\alpha)$.
One can also set the distance of two strictly contact diffeomorphisms belonging to different components of $\Diff (M,\alpha)$ equal to $+ \infty$, and obtain a bi-invariant ``distance function'' on the whole group $\Diff (M,\alpha)$.

\begin{proof}
Suppose $\phi \in \Diff_0 (M,\alpha)$ and $E (\phi) = 0$.
Then there exists a sequence of strictly contact isotopies $\Phi_{H_i}$, such that
\begin{enumerate}
	\item [(i)] $\phi_{H_i}^1 = \phi$ for all $i$, and
	\item [(ii)] $\| H_i \|_\alpha \to 0$ as $i \to \infty$.
\end{enumerate}
The conformal factors $h_i$ are all identically zero, hence if $\phi \neq \id$, then by Theorem~\ref{thm:energy-capacity-ineq}, $\| H_i \|_\alpha \geq C > 0$, contradicting hypothesis (ii).
\end{proof}

In general, two Hamiltonian or strictly contact isotopies $\Phi_F$ and $\Phi_H$ satisfy
\begin{align} \label{eqn:length-preservation}
	\| \Hbar \# F\| = \| F - H \|,
\end{align}
where $\| \cdot \|$ refers to either the contact length in equation~(\ref{eqn:contact-length}) or the Hofer length in equation~(\ref{eqn:hofer-length}).
Equation~(\ref{eqn:length-preservation}) follows from the fact that the isotopies generated by $H$ and $F$ preserve the contact or symplectic form, respectively.
In either case, the composed isotopy $\Phi_H^{-1} \circ \Phi_F$ is generated by the function $\Hbar \# F = (F - H) \circ \Phi_H$.
On the other hand, two contact isotopies $\Phi_H$ and $\Phi_F$ with non-trivial conformal factors do not satisfy equation~(\ref{eqn:length-preservation}).
By Lemma~\ref{lem:ham-gp-str}, the composition $\Phi_H^{-1} \circ \Phi_F$ is generated by the contact Hamiltonian
	\[ \Hbar \# F = e^{-h} \left( (F - H) \circ \Phi_H \right). \]
We show by example that neither the function $\delta (\Phi_H, \Phi_F) = \ell (\Phi_H^{-1} \circ \Phi_F)$, nor its symmetrization $\frac{1}{2} (\delta (\Phi_H, \Phi_F) + \delta (\Phi_F, \Phi_H))$, satisfies the triangle inequality.
Recall from Example~\ref{exa:divergence-factors} the sequence of smooth contact Hamiltonians $H_k$, defined by equation~(\ref{eqn:divergence-factors}), whose conformal factors satisfy $| h_k | = \ln k$, and let $F = 1$.
Evaluating the functions
	\[ (H_k \# F)_t = H_k + e^{h_k^t \circ (\phi_{H_k}^t)^{-1}} \]
at the origin at time $1$ gives
	\[ \| H_k \# F \|_\alpha > k > \frac{3}{k^2} + 1 > \| H_k \|_\alpha + \| F \|_\alpha. \]
Hence $\delta (\Phi_{H_k}^{-1}, \Phi_F) > \delta (\Phi_{H_k}^{-1}, \id) + \delta (\id, \Phi_F)$.
Since $\| \Hbar_k \|_\alpha < 3 / k$ and $\Fbar = - 1$, the same conclusion holds for the symmetrization.
Adding the maximum norms of the conformal factors also does not prevent failure of the triangle inequality.
The example can be constructed any contact manifold $(M,\xi)$ with any contact form $\alpha$ by Darboux's theorem.

\section{Topological automorphisms and contact rigidity} \label{sec:automorphisms}
We now prove that the topological conformal factor of a topological automorphism $\phi$ is uniquely determined by the homeomorphism itself and by the contact form $\alpha$.

\begin{proof}[Proof of Proposition~\ref{thm:unique-topo-conformal-factor}]
By Lemma~\ref{lem:conformal-calculus}, $(\phi_i^{-1} \circ \psi_i)^* \alpha = e^{g_i - h_i \circ \phi_i^{-1} \circ \psi_i} \alpha$, and by our hypotheses the sequence of conformal factors $g_i - h_i \circ \phi_i^{-1} \circ \psi_i$ converges uniformly to the continuous function $g - h$.
Denote by
\begin{align} \label{eqn:normalized-vol}
	\mu_\alpha = \frac{1}{\int_M \alpha \wedge (d\alpha)^{n - 1}} \cdot \alpha \wedge (d\alpha)^{n - 1} = \frac{1}{\int_M \nu_\alpha} \cdot \nu_\alpha
\end{align}
the normalized canonical volume form induced by $\alpha$, and by $\mubar_\alpha$ the good measure on $M^{2 n - 1}$ in the terminology of \cite[Section 1]{fathi:sgh80}, obtained from integration of the volume form $\mu_\alpha$.
If $U \subseteq M$ is an open subset, then the sequence of real numbers
	\[ \int_U (\psi_i^{-1} \circ \phi_i)_* \mubar_\alpha = \int_U (\phi_i^{-1} \circ \psi_i)^* \mu_\alpha = \int_U e^{n (g_i - h_i \circ \phi_i^{-1} \circ \psi_i)} \mu_\alpha \rightarrow \int_U e^{n (g - h)} \mu_\alpha \]
as $i \to \infty$.
On the other hand, since the sequence of diffeomorphisms $\phi_i^{-1} \circ \psi_i$ $C^0$-converges to the identity, the induced measures $(\psi_i^{-1} \circ \phi_i)_* \mubar_\alpha$ converge to $\mubar_\alpha$ in the metric that induces the weak topology on the space of (good) measures on $M$ \cite[Proposition 1.5]{fathi:sgh80}.
Evaluating a measure at the set $U$ is lower semicontinuous by \cite{denker:etc76} or \cite[Proposition 1.2]{fathi:sgh80}, and therefore
\begin{align} \label{eqn:semicontinuous}
	\int_U e^{n (g - h)} \mu_\alpha \ge \int_U \mu_\alpha
\end{align}
for every open subset $U \subseteq M$.
This implies $e^{n (g - h)} \ge 1$.
Indeed, suppose $e^{n (g - h)} (x_0) < 1$ at some point $x_0 \in M$, then there exists an open neighborhood $U$ of $x_0$ such that $e^{n (g - h)} (x) < 1$ for all $x \in U$, a contradiction to equation~(\ref{eqn:semicontinuous}).
That proves $n (g - h) \ge 0$, or $g \ge h$.
Reversing the roles of $h$ and $g$ yields $h \ge g$, and hence the proof.
\end{proof}

The automorphism group $\Sympeo(W,\omega)$ of a symplectic manifold $(W,\omega)$ satisfies the Gromov--Eliashberg $C^0$-rigidity
	\[ \Sympeo (W,\omega) \cap \Diff (W) = \Symp (W,\omega). \]
The analogous result for the topological automorphism group $\Aut(M,\xi)$ is Theorem~\ref{thm:contact-rigidity}.
The hypotheses on convergence and smoothness in these theorems can be stated in many equivalent ways.
Recall from Section~\ref{sec:contact-geometry} that a $C^0$-Cauchy sequence of homeomorphisms $\phi_i$ always $C^0$-converges to another homeomorphism $\phi$.
On the other hand, if the sequence $\phi_i$ is only assumed to be uniformly Cauchy, then the limit $\phi$ is a continuous map, but in general not a homeomorphism.
In fact, the uniform metric $d_M$ is never complete on any manifold $M$.
However, as already pointed out in Section~\ref{sec:contact-geometry}, if in addition the limit $\phi$ is assumed to be a homeomorphism, then the sequence $\phi_i$ also $C^0$-converges to $\phi$.
Moreover, if each homeomorphism $\phi_i$ is volume-preserving (for example, if $\phi_i$ is a symplectic diffeomorphism), and the limit homeomorphism $\phi$ is assumed to be smooth, then it is in fact a volume-preserving diffeomorphism.
Indeed, the limit $\phi$ preserves volume, so if its derivative at a point exists, it has determinant $1$, and the claim follows from the inverse function theorem.
When the volume-preserving assumption is dropped, a smooth homeomorphism need not be a diffeomorphism, as the classical example $x \mapsto x^3$ on the real line shows.
However, a similar argument applies in the contact case, and Theorem~\ref{thm:contact-rigidity} turns out to be equivalent to
	\[ \Aut (M,\xi) \cap \Diff (M) = \Diff (M,\xi). \]

\begin{proof}[Proof of Theorem~\ref{thm:contact-rigidity}]
In local Darboux coordinates around a point $x \in M$, $\phi^* \mu_\alpha = \det d\phi (x) \cdot \mu_\alpha$, and by definition $\phi (x + y) - \phi (x) = d\phi (x) (y) + o (| y |)$.
Thus
\begin{align} \label{eqn:det-bound}
	\frac{(\phi^{-1})_* \mubar_\alpha (B_\epsilon)}{\mubar_\alpha (B_\epsilon)} = \frac{\mubar_\alpha (\phi (B_\epsilon))}{\mubar_\alpha (B_\epsilon)}\rightarrow \det d\phi (x)
\end{align}
as $\epsilon \to 0$, where $B_\epsilon$ is the closed ball of radius $\epsilon$ centered at $x$, and $\mubar_\alpha$ is the (signed) measure obtained by integration of the volume form $\mu_\alpha$.
The first step is to prove that $\det d\phi (x) > 0$ for all $x \in M$.
Then by the inverse function theorem, $\phi$ is an orientation-preserving diffeomorphism, and we can write $\phi^* \mu_\alpha = e^{n g} \mu_\alpha$ for a smooth function $g \colon M \to \R$.
Our argument does not depend on the choice of auxiliary Riemannian metric, and the atlas on $M$ can be chosen to be contact (in fact, strictly contact), and thus to preserve the orientation induced by the given volume form (in fact, the volume form itself).
Arguing by contradiction, suppose $\det d\phi (x) \le 0$ at the point $x \in M$.
By equation~(\ref{eqn:det-bound}), we can choose $\epsilon > 0$ small enough so that
	\[ (\phi^{-1})_* \mubar_\alpha (B_\epsilon) < \frac{1}{4} e^{- n | h |} \cdot \mubar_\alpha (B_\epsilon). \]
We have
	\[ e^{- n | h |} \cdot \mubar_\alpha (B_\epsilon) = e^{- n | h |} \cdot \int_{B_\epsilon} \mu_\alpha \le \int_{B_\epsilon} e^{n h} \mu_\alpha, \]
and since the functions $h_i$ converge to the continuous function $h$ uniformly,
	\[ \frac{1}{2} e^{- n | h |} \cdot \mubar_\alpha (B_\epsilon) \le \int_{B_\epsilon} e^{n h_i} \mu_\alpha = \int_{B_\epsilon} \phi_i^* \mu_\alpha = \int_{B_\epsilon} (\phi_i^{-1})_* \mubar_\alpha = (\phi_i^{-1})_* \mubar_\alpha (B_\epsilon) \]
for $i$ sufficiently large.
On the other hand, the sequence $\phi_i$ $C^0$-converges to the homeomorphism $\phi$, so the induced measures $(\phi_i^{-1})_* \mubar_\alpha \to (\phi^{-1})_* \mubar_\alpha$.
But evaluation at the compact set $B_\epsilon$ is upper semi-continuous, and we arrive at a contradiction.

Let $U \subseteq M$ be an open subset.
The diffeomorphisms $\phi_i^{-1} \circ \phi \colon M \to M$ $C^0$-converge to the identity, and thus the uniform convergence of the functions $h_i \to h$ implies
	\[ \int_U (\phi^{-1} \circ \phi_i)_* \overline \mu_\alpha = \int_U (\phi^{-1}_i \circ \phi)^* \mu_\alpha = \int_U e^{n (g - h_i \circ \phi_i^{-1} \circ \phi)} \mu_\alpha \rightarrow \int_U e^{n (g - h)} \mu_\alpha \]
as $i \to \infty$.
On the other hand, the measures $(\phi^{-1} \circ \phi_i)_* \overline \mu_\alpha$ converge in the weak topology to $\overline \mu_\alpha$, and since evaluating the measures on $U$ is lower semi-continuous, we have $g \ge h$.
Repeating the same argument with the sequence of inverses $\phi_i^{-1} \circ \phi$ proves $h \ge g$, and therefore $h = g$.
In particular $h$ is a smooth function.

Consider the symplectic diffeomorphisms $\widehat \phi_i (x,\theta) = (\phi_i (x), \theta - h_i (x))$ of the symplectization $(W,\omega)$ of $(M,\alpha)$.
Because of the choice of split Riemannian metric $g_W = \pi_1^*g_M + d\theta \otimes d\theta$, the sequence $\widehat \phi_i$ $C^0$-converges to the diffeomorphism $\widehat \phi$ given by $\widehat \phi (x, \theta) = (\phi(x), \theta - h (x))$.
Then by Gromov--Eliashberg $C^0$-rigidity, $\widehat \phi$ is a symplectic diffeomorphism, and by equation~(\ref{eqn:lifted-diffeo}), that is equivalent to $\phi$ being a contact diffeomorphism with $\phi^*\alpha = e^h \alpha$.
\end{proof}

See \cite{ms:gac13} for a rigidity result for contact diffeomorphisms that does not involve the conformal factors.
We remark however that if the uniform convergence of the conformal factors is dropped from the hypotheses of Theorem~\ref{thm:contact-rigidity}, then nothing can be said about the conformal factor of the limit diffeomorphism $\phi$ and the contact Hamiltonian of a contact isotopy conjugated by $\phi$.
See \cite{ms:tcd2, ms:gac13} for details.

In the article \cite{banyaga:ugh11}, the automorphism group of the contact form $\alpha$ was defined to be the $C^0$-closure $\overline{\Diff} (M,\alpha)$ of the group of strictly contact diffeomorphisms of $(M,\alpha)$ in the group $\Homeo (M)$ of homeomorphisms of $M$.
A homeomorphism $\phi$ belongs to this group if and only if there exists a sequence of strictly contact diffeomorphisms that uniformly converges to $\phi$.
More generally, in this paper we define the topological automorphism group $\Aut (M,\alpha)$ of the contact form $\alpha$ as the subgroup of $\Aut (M,\xi)$ consisting of those homeomorphisms $\phi$ with topological conformal factor equal to zero.

\begin{dfn}[Topological automorphism of the contact form]
A homeomorphism belongs to the subgroup $\Aut (M,\alpha) \subset \Aut (M,\xi)$, called the \emph{topological automorphism group} of the \emph{contact form} $\alpha$, if its unique topological conformal factor $h_\phi$ vanishes,
	\[ \Aut (M,\alpha) = \{ \phi \in \Aut (M,\xi) \mid h_\phi = 0 \} \subset \Aut (M,\xi). \]
In other words, $\phi \in \Aut (M,\alpha)$ if there exists a sequence of contact diffeomorphisms $\phi_i$ with $\phi_i^* \alpha = e^{h_i} \alpha$, such that the sequence $\phi_i$ $C^0$-converges to the homeomorphism $\phi$, and the sequence of conformal factors $h_i$ converges uniformly to zero.
\end{dfn}

As in the case of diffeomorphisms,
	\[ \Aut (M,\alpha) = \Aut (M,\xi) \cap \Homeo (M,\mubar_\alpha). \]
By Theorem~\ref{thm:transformation-law}, conjugation by $\Aut (M,\alpha)$ preserves the group of topological strictly contact dynamical systems of $(M,\alpha)$, and in particular
	\[ \Homeo (M,\alpha) \trianglelefteq \Aut (M,\alpha), \]
where $\Homeo (M,\alpha)$ denotes the group of time-one maps of topological strictly contact isotopies.
The last statement appears in \cite{banyaga:ugh11} with the group $\Aut (M,\alpha)$ replaced by $\overline{\Diff} (M,\alpha)$.
Clearly $\overline{\Diff} (M,\alpha) \subseteq \Aut (M,\alpha)$.
In \cite{banyaga:ugh11}, it is shown that if the contact form $\alpha$ is regular, then $\Diff (M,\alpha) \subsetneq \Homeo (M,\alpha) \subseteq \overline{\Diff} (M,\alpha)$. 

The next two corollaries are consequences of Theorem~\ref{thm:contact-rigidity}.

\begin{cor} \label{cor:contact-rigidity}
Let $M$ be a contact manifold with a contact form $\alpha$.
Then
	\[ \Aut (M,\alpha) \cap \Diff (M) = \Diff (M,\alpha). \]
\end{cor}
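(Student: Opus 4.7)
The plan is to prove the two inclusions separately, with the non-trivial direction reducing immediately to the contact $C^0$-rigidity theorem (Theorem~\ref{thm:contact-rigidity}).

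For the inclusion $\Diff(M,\alpha) \subseteq \Aut(M,\alpha) \cap \Diff(M)$, I would simply note that any strictly contact diffeomorphism $\phi$ satisfies $\phi^*\alpha = \alpha$, so it is a contact diffeomorphism with smooth conformal factor identically zero. Taking the constant sequence $\phi_i = \phi$ with $h_i = 0$ exhibits $\phi$ as an element of $\Aut(M,\alpha)$, and of course $\phi \in \Diff(M)$.

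For the reverse inclusion, I would take a homeomorphism $\phi \in \Aut(M,\alpha) \cap \Diff(M)$. By the definition of $\Aut(M,\alpha)$, there exists a sequence $\phi_i \in \Diff(M,\xi)$ with $\phi_i^*\alpha = e^{h_i}\alpha$ such that $\phi_i \to \phi$ uniformly and $h_i \to 0$ uniformly; in particular, the limit function $h$ of the conformal factors is the zero function, which is trivially continuous. Since $\phi$ is assumed to be smooth, Theorem~\ref{thm:contact-rigidity} applies and yields that $\phi$ is a contact diffeomorphism with $\phi^*\alpha = e^{h}\alpha = e^{0}\alpha = \alpha$. Hence $\phi \in \Diff(M,\alpha)$, completing the proof.

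There is no real obstacle here: Theorem~\ref{thm:contact-rigidity} does all the work, and the uniqueness of the topological conformal factor (Proposition~\ref{thm:unique-topo-conformal-factor}) ensures that the limit $h = 0$ coming from the defining sequence agrees with the conformal factor identified by the rigidity theorem, so the conclusion $\phi^*\alpha = \alpha$ is unambiguous. The only subtlety worth mentioning is that the intersection $\Aut(M,\alpha) \cap \Diff(M)$ refers to smooth homeomorphisms; as discussed in the paragraph preceding Theorem~\ref{thm:contact-rigidity}, any smooth homeomorphism arising as a uniform limit of contact diffeomorphisms is in fact a diffeomorphism, so no additional regularity argument is required.
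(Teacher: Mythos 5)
Your proof is correct and is essentially the argument the paper intends; the paper simply asserts that the corollary is a consequence of Theorem~\ref{thm:contact-rigidity} without writing out the details, and the details you supply are exactly the right ones. One small overstatement: your appeal to Proposition~\ref{thm:unique-topo-conformal-factor} is superfluous, since Theorem~\ref{thm:contact-rigidity} already concludes $\phi^*\alpha = e^h\alpha$ for the {\em same} function $h$ that appears as the uniform limit of the $h_i$ in its hypothesis, so there is only one candidate conformal factor and no agreement to verify.
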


The same statement also holds with $\Aut (M,\alpha)$ replaced by $\overline{\Diff} (M,\alpha)$.

\begin{cor}[Strictly contact $C^0$-rigidity] \label{cor:strictly-contact-rigidity}
Let $\alpha$ be a contact form on a contact manifold $(M,\xi)$.
The group of strictly contact diffeomorphisms of $(M,\alpha)$ is $C^0$-closed in the group of diffeomorphisms of $M$.
In other words, suppose the sequence $\phi_i$ of strictly contact diffeomorphisms uniformly converges to a homeomorphism $\phi$ of $M$, and assume that $\phi$ is smooth.
Then $\phi$ is a diffeomorphism that preserves $\alpha$, i.e.\ $\phi^*\alpha = \alpha$.
\end{cor}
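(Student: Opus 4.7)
The plan is to reduce the statement directly to Theorem~\ref{thm:contact-rigidity}, the contact $C^0$-rigidity theorem that has already been established. The key observation is that the strictly contact condition $\phi_i^*\alpha = \alpha$ is precisely the assertion that the conformal factor $h_i$ of each $\phi_i$ vanishes identically. Thus the constant sequence $h_i \equiv 0$ trivially converges uniformly on $M$ to the continuous (in fact, smooth) limit function $h \equiv 0$.

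Once this observation is in place, all hypotheses of Theorem~\ref{thm:contact-rigidity} are met: the sequence $\phi_i \in \Diff(M,\xi)$ with $\ker\alpha = \xi$ converges uniformly on compact subsets to the homeomorphism $\phi$, the associated conformal factors $h_i$ converge uniformly to the continuous function $h \equiv 0$, and by assumption $\phi$ is smooth. The conclusion of Theorem~\ref{thm:contact-rigidity} then delivers that $\phi$ is a contact diffeomorphism satisfying $\phi^*\alpha = e^h \alpha = e^0 \alpha = \alpha$, which is exactly the claim that $\phi$ is a strictly contact diffeomorphism.

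Note that there is essentially no extra work beyond invoking the previous theorem; all the analytical difficulty (the use of the Gromov--Eliashberg rigidity via symplectization, the measure-theoretic argument showing $g = h$, and the application of the inverse function theorem to promote a smooth homeomorphism to a diffeomorphism) is already carried by the proof of Theorem~\ref{thm:contact-rigidity}. The main point to emphasize in the writeup is therefore conceptual: strictly contact rigidity is a special case of contact rigidity in which the conformal factor information is trivial, and the corollary can equivalently be phrased as the identity $\Aut(M,\alpha) \cap \Diff(M) = \Diff(M,\alpha)$, paralleling the statement $\Aut(M,\xi) \cap \Diff(M) = \Diff(M,\xi)$ from Theorem~\ref{thm:contact-rigidity}. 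There is no genuine obstacle; the corollary is a one-line consequence of the prior theorem, and the proof should simply record this reduction explicitly.
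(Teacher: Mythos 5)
Your reduction is correct and is exactly the intended argument: the paper simply remarks that this corollary is a consequence of Theorem~\ref{thm:contact-rigidity} without spelling out the details, and your observation that $\phi_i^*\alpha=\alpha$ forces $h_i\equiv 0$, hence $h\equiv 0$ and $\phi^*\alpha=e^0\alpha=\alpha$, supplies precisely the missing reduction.
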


It is not difficult to see that the inclusions $\Diff (M,\xi) \subset \Diff (M)$ and $\Diff (M,\alpha) \subset \Diff (M,\mu_\alpha)$ are in fact proper, and by contact $C^0$-rigidity, this immediately implies $\Aut (M,\xi) \subsetneq \Homeo (M)$ and $\Aut (M,\alpha) \subsetneq \Homeo (M,\mubar_\alpha)$.
The same statements hold for the identity components.

As the given proofs clearly show, the rigidity results proved in this section are local statements, and thus local versions of these results hold.
In particular, they generalize to open manifolds.
We give the proof only for a local version of Proposition~\ref{thm:unique-topo-conformal-factor}.

\begin{pro}[Local uniqueness of topological conformal factor] \label{pro:local-unique-topo-conformal-factor}
Let $U \subseteq M$ be an open subset, and $\phi_i$ and $\psi_i \in \Diff (M)$ be two sequences with $\phi_i^* \alpha = e^{h_i} \alpha$ and $\psi_i^* \alpha = e^{g_i} \alpha$ on $U$.
Suppose that the sequences $\phi_i^{-1} \circ \psi_i$ and $\psi_i^{-1} \circ \phi_i$ converge to the identity uniformly on compact subsets of $U$, and moreover, the sequences $h_i$ and $g_i$ converge uniformly on compact subsets of $U$ to continuous functions $h$ and $g$, respectively.
Then $h = g$ on $U$.
\end{pro}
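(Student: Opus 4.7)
The plan is to localize the argument given in the proof of Proposition~\ref{thm:unique-topo-conformal-factor}. Fix an open set $V$ with compact closure $\bar V \subset U$, and choose a slightly larger $V'$ with $\bar V \subset V' \subset\subset U$. It suffices, by the symmetric roles of the sequences $\phi_i$ and $\psi_i$, to establish $g \geq h$ on $V$, and then to let $V$ exhaust $U$. Since $\phi_i^{-1} \circ \psi_i \to \id$ uniformly on $\bar V$, for $i$ large the image $(\phi_i^{-1} \circ \psi_i)(V)$ is contained in $V'$, and in particular $\psi_i(V) \subset \phi_i(U)$. Then Lemma~\ref{lem:conformal-calculus}, applied pointwise on $V$, yields
	\[ (\phi_i^{-1} \circ \psi_i)^* \alpha = e^{g_i - h_i \circ \phi_i^{-1} \circ \psi_i} \alpha \]
on $V$, and the right-hand conformal factors converge uniformly on $\bar V$ to the continuous function $g - h$.

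The key measure-theoretic computation is then performed with the unnormalized volume form $\nu_\alpha = \alpha \wedge (d\alpha)^{n-1}$, viewed as a finite positive Borel measure $\bar\nu_\alpha$ on $V'$. For $i$ large,
	\[ (\psi_i^{-1} \circ \phi_i)_* \bar\nu_\alpha(V) = \bar\nu_\alpha\bigl((\phi_i^{-1} \circ \psi_i)(V)\bigr) = \int_V (\phi_i^{-1} \circ \psi_i)^* \nu_\alpha = \int_V e^{n(g_i - h_i \circ \phi_i^{-1} \circ \psi_i)} \, \nu_\alpha, \]
using that contact diffeomorphisms are orientation-preserving and that $(\phi_i^{-1} \circ \psi_i)(V) \subset V'$. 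By uniform convergence of the integrand on $V$, this tends to $\int_V e^{n(g - h)} \nu_\alpha$ as $i \to \infty$.

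The main obstacle is the weak convergence step, which must be transplanted from the closed manifold to the precompact open set $V'$. For any continuous $f$ compactly supported in $V'$, a standard dominated-convergence argument (legitimate because for $i$ large the support of $f \circ (\psi_i^{-1} \circ \phi_i)$ lies in a fixed compact subset of $V'$, and $\psi_i^{-1} \circ \phi_i \to \id$ uniformly there) gives $\int f \, d(\psi_i^{-1} \circ \phi_i)_* \bar\nu_\alpha \to \int f \, d\bar\nu_\alpha$. Approximating $\chi_V$ from below by such continuous functions as in \cite{denker:etc76} or \cite[Proposition~1.2]{fathi:sgh80} yields lower semicontinuity at $V$, so
	\[ \int_V e^{n(g - h)} \, \nu_\alpha \geq \int_V \nu_\alpha \]
for every open $V \subset\subset U$. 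Since this holds on arbitrarily small balls, pointwise $e^{n(g - h)} \geq 1$ on $U$, hence $g \geq h$. Interchanging the two sequences gives $h \geq g$, completing the proof.
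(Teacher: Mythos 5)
Your proposal is correct and follows essentially the same approach as the paper. The paper's own proof is terse, essentially saying "Arguing as in the proof of Proposition~\ref{thm:unique-topo-conformal-factor}, we conclude $g - h \ge 0$ on $V$" after setting up the nested neighborhoods $\overline{V}\subset V'\subset\overline{V}'\subset U$ and observing the uniform convergence of the conformal factors on $\overline V$. You supply the details of exactly how the measure-theoretic argument localizes, in particular replacing the appeal to weak convergence of good measures on a closed manifold (\cite{fathi:sgh80}, \cite{denker:etc76}) with a direct dominated-convergence argument against test functions compactly supported in $V'$, which is a useful clarification since those cited results are formulated for closed manifolds.
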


Note that we do not require the stronger assumption that the diffeomorphisms $\phi_i^{-1} \circ \psi_i$ and $\psi_i^{-1} \circ \phi_i$ of $M$ preserve the subset $U$, but only that for all $x \in U$, $\lim_i \phi_i^{-1} \circ \psi_i (x) = x = \lim_i \psi_i^{-1} \circ \phi_i (x)$, and that this convergence is uniform on compact subsets of $U$.

\begin{proof}
Let $x \in U$, and choose open neighborhoods $V$ and $V'$ of $x$ with compact closures, such that $\overline{V} \subset V' \subset \overline{V}' \subset U$.
By hypothesis, the diffeomorphisms $\phi_i^{-1} \circ \psi_i$ converge to the identity uniformly on $\overline{V}$, and in particular, $\phi_i^{-1} \circ \psi_i (\overline{V}) \subset V'$ for $i$ sufficiently large.
Thus since $h_i$ and $g_i$ converge uniformly on $\overline{V}'$, the conformal factor $g_i - h_i \circ \phi_i^{-1} \circ \psi_i$ converges to $g - h$ uniformly on $\overline{V}$.
Arguing as in the proof of Proposition~\ref{thm:unique-topo-conformal-factor}, we conclude $g - h \ge 0$ on $V$, and in particular $g (x) \ge h (x)$.
Since $x \in U$ was arbitrary, this proves $g \ge h$ on $U$.
Again reversing the roles of $g$ and $h$ yields $g \le h$, and hence the proof.
\end{proof}

Conversely, suppose the topological conformal factor $h$ of a topological automorphism $\phi$ is smooth.
This does not necessarily imply that $\phi$ is a (contact) diffeomorphism.
Indeed, when the contact form $\alpha$ is regular, there exist strictly contact homeomorphisms that are not smooth (or even $C^1$) \cite{banyaga:ugh11}.

\begin{que}[Smooth topological conformal factor] \label{que:smooth-conformal-factor}
Suppose the topological conformal factor $h$ of the topological automorphism $\phi$ is smooth.
What can be said about the properties of the homeomorphism $\phi$?
Does there exist a contact diffeomorphism $\psi$ with $\psi^* \alpha = e^h \alpha$?
\end{que}

This question will be discussed in the next section.
In \cite{banyaga:ecg00}, Banyaga defined an invariant $D_\alpha \colon \Diff (M,\xi) \longrightarrow C^\infty (M)$ of a contact diffeomorphism $\phi$, by assigning to it the conformal factor $\overline f = f_{\phi^{-1}}$ of its inverse.
The map $D_\alpha$ is a one-cocycle, and the cohomology class $[D_\alpha]$ in $H^1 (\Diff (M,\xi), C^\infty (M))$ it represents does not depend on the contact form $\alpha$.
We call $D_\alpha$ \emph{Banyaga's cohomological conformal contact invariant}.
This invariant obviously generalizes to automorphisms of $\xi$ by setting
	\[ \overline{D}_\alpha \colon \Aut (M,\xi) \longrightarrow C^0 (M), \quad \phi \mapsto \overline f, \]
where $\overline f = f_{\phi^{-1}}$ is the unique topological conformal factor associated to the topological automorphism $\phi^{-1} \in \Aut(M,\xi)$.

\begin{pro}
The function $\overline{D}_\alpha$ is a one-cocycle with values in $C^0(M)$, whose cohomology class $[\overline{D}_\alpha] \in H^1 (\Aut (M,\xi), C^0 (M))$ is independent of the contact form $\alpha$ defining $\xi$.
Moreover, if $\phi$ is smooth, then $\overline{D}_\alpha (\phi) = D_\alpha (\phi)$.
\end{pro}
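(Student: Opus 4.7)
The plan is to deduce all three assertions directly from the calculus of topological conformal factors established in Proposition~\ref{pro:gp-str-auto}, Proposition~\ref{pro:indep-auto-gp}, and Theorem~\ref{thm:contact-rigidity}. First I would fix the $\Aut(M,\xi)$-module structure on $C^0(M)$ given by $\phi \cdot u = u \circ \phi^{-1}$, so that the $1$-cocycle condition reads $c(\phi \circ \psi) = c(\phi) + \phi \cdot c(\psi)$, and the coboundary of a $0$-cochain $u \in C^0(M)$ is $(\partial u)(\phi) = u \circ \phi^{-1} - u$.

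Writing $h_\phi$ for the topological conformal factor of $\phi \in \Aut(M,\xi)$, so that $\overline{D}_\alpha(\phi) = h_{\phi^{-1}}$, I would apply the composition formula of Proposition~\ref{pro:gp-str-auto} to $(\phi \circ \psi)^{-1} = \psi^{-1} \circ \phi^{-1}$, obtaining
\[
\overline{D}_\alpha(\phi \circ \psi) \;=\; h_{\psi^{-1} \circ \phi^{-1}} \;=\; h_{\psi^{-1}} \circ \phi^{-1} + h_{\phi^{-1}} \;=\; \overline{D}_\alpha(\phi) + \phi \cdot \overline{D}_\alpha(\psi),
\]
which is precisely the $1$-cocycle identity.

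For independence of the contact form, let $\alpha' = e^f \alpha$. By Proposition~\ref{pro:indep-auto-gp}, the topological conformal factor of any $\phi \in \Aut(M,\xi)$ with respect to $\alpha'$ is $h_\phi + (f \circ \phi - f)$; applying this to $\phi^{-1}$ yields $\overline{D}_{\alpha'}(\phi) - \overline{D}_\alpha(\phi) = f \circ \phi^{-1} - f = (\partial f)(\phi)$. Thus $\overline{D}_{\alpha'} - \overline{D}_\alpha = \partial f$ is the coboundary of the $0$-cochain $f \in C^0(M)$, proving $[\overline{D}_{\alpha'}] = [\overline{D}_\alpha]$ in $H^1(\Aut(M,\xi), C^0(M))$.

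Finally, if $\phi \in \Aut(M,\xi)$ is smooth, then contact $C^0$-rigidity (Theorem~\ref{thm:contact-rigidity}) promotes $\phi$ to an element of $\Diff(M,\xi)$. The constant approximating sequence $\phi_i = \phi^{-1}$ combined with the uniqueness of the topological conformal factor (Theorem~\ref{thm:unique-topo-conformal-factor}) identifies the topological conformal factor of $\phi^{-1}$ with its smooth conformal factor, giving $\overline{D}_\alpha(\phi) = D_\alpha(\phi)$. I expect no substantive obstacle here: the only care required is to keep the sign and composition conventions from Lemma~\ref{lem:conformal-calculus} and Proposition~\ref{pro:gp-str-auto} aligned with the chosen module structure on $C^0(M)$, so that the calculations above genuinely instantiate the $1$-cocycle and coboundary formulas of group cohomology.
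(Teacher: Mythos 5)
Your proposal is correct and follows the same route as the paper, which simply cites Theorem~\ref{thm:unique-topo-conformal-factor} (for well-definedness), Proposition~\ref{pro:gp-str-auto}, Proposition~\ref{pro:indep-auto-gp}, and Theorem~\ref{thm:contact-rigidity} without spelling out the computations. You have merely unwound the cocycle identity, the coboundary formula for the change of contact form, and the identification with Banyaga's smooth invariant explicitly, all of which check out.
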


\begin{proof}
The first statement follows at once from Theorem~\ref{thm:unique-topo-conformal-factor}, Proposition~\ref{pro:gp-str-auto}, and Proposition~\ref{pro:indep-auto-gp}.
The last part is a consequence of Theorem~\ref{thm:contact-rigidity}.
\end{proof}

Let $\sigma$ be the conformal class of a tensor field on a smooth manifold $M$.
The group $\Diff (M,\sigma)$ of diffeomorphisms preserving $\sigma$ is called \emph{inessential} \cite{banyaga:ecg00} if there exists a representative tensor $\tau \in \sigma$, such that $\Diff (M,\sigma) = \Diff (M,\tau)$.
If no such tensor in the conformal class $\sigma$ exists, the group is called \emph{essential}.
Banyaga proved that the group $\Diff (M,\sigma)$ is inessential if and only if $[D_\alpha] = 0$.
Along with some other classical structures, Banyaga showed that the group of contact diffeomorphisms is essential.
Using a local argument and a cohomological equation, it is shown in \cite{ms:pec12} that the group of contact diffeomorphisms of any contact manifold is in fact \emph{properly essential}, i.e.\
	\[ \bigcup_{\alpha} \Diff (M,\alpha) \varsubsetneq \Diff (M,\xi), \]
where the union is over all contact forms $\alpha$ with $\ker \alpha = \xi$.
The topological automorphism group $\Aut (M,\xi)$ exhibits similar behavior.

\begin{thm}[Proper essentiality]
The group of topological automorphisms of the contact structure $\xi$ is properly essential, i.e.\
\begin{align} \label{eqn:properly-essential}
	\bigcup_\alpha \Aut (M,\alpha) \varsubsetneq \Aut (M,\xi),
\end{align}
where the union is over all contact forms with $\ker \alpha = \xi$.
The cohomology class $[\overline{D}_\alpha] \in H^1(\Aut (M,\xi), C^0 (M))$ is non-vanishing.
\end{thm}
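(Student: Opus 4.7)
The plan is to reduce both assertions to the smooth proper essentiality theorem of \cite{mueller:pec11} by invoking the contact $C^0$-rigidity of Corollary~\ref{cor:contact-rigidity}. For the strict containment (\ref{eqn:properly-essential}), I would first invoke \cite{mueller:pec11} to select a smooth contact diffeomorphism $\phi \in \Diff (M,\xi)$ that preserves no contact form inducing $\xi$, that is, $\phi \notin \bigcup_{\alpha} \Diff (M,\alpha)$. Since $\Diff (M,\xi) \subseteq \Aut (M,\xi)$, this $\phi$ is a topological automorphism of $\xi$. If $\phi$ belonged to $\Aut (M,\alpha')$ for some contact form $\alpha'$ with $\ker \alpha' = \xi$, then Corollary~\ref{cor:contact-rigidity} would force $\phi \in \Aut (M,\alpha') \cap \Diff (M) = \Diff (M,\alpha')$, contradicting the choice of $\phi$. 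Hence $\phi$ witnesses the strict inclusion.

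For the non-vanishing of $[\overline D_\alpha]$, I would argue by contradiction. Assume $[\overline D_\alpha] = 0$, so that there exists $f \in C^0 (M)$ with $\overline D_\alpha (\phi) = f - f \circ \phi^{-1}$ for every $\phi \in \Aut (M,\xi)$. Applying Proposition~\ref{thm:unique-topo-conformal-factor} to the constant sequence $\phi_i = \phi$ shows that the topological conformal factor of a smooth contact diffeomorphism agrees with its classical conformal factor. The cocycle identity therefore restricts to the pointwise equation $h_\phi = f - f \circ \phi$ on $M$ for every $\phi \in \Diff (M,\xi)$, where $h_\phi$ is the smooth function defined by $\phi^* \alpha = e^{h_\phi} \alpha$ and $f$ is a single continuous function common to all such $\phi$.

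The main obstacle is that smooth proper essentiality of \cite{mueller:pec11} excludes only smooth primitives $f \in C^\infty (M)$, whereas here $f$ is merely continuous. To overcome this I would revisit the local construction underlying \cite{mueller:pec11}: in a Darboux chart one exhibits compactly supported smooth contact diffeomorphisms whose conformal factors exhibit a pointwise incompatibility along orbits of the Reeb flow that obstructs the cohomological equation $h_\phi = g - g \circ \phi$ for any single function $g$, irrespective of regularity. Because this obstruction is formulated purely pointwise and makes no use of differentiability of $g$, the same construction applies verbatim in the continuous category and rules out $f \in C^0 (M)$. Feeding this back into the assumed identity produces the desired contradiction and establishes $[\overline D_\alpha] \ne 0$ in $H^1 (\Aut (M,\xi), C^0 (M))$.
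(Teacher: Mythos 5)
Your proof of the strict inclusion~(\ref{eqn:properly-essential}) is exactly the paper's argument: take $\phi \in \Diff(M,\xi)$ preserving no contact form (from \cite{mueller:pec11}), observe it lies in $\Aut(M,\xi)$, and use Corollary~\ref{cor:contact-rigidity} to forbid $\phi \in \Aut(M,\alpha')$ for any $\alpha'$. For non-vanishing of $[\overline D_\alpha]$ you have the right plan, but the phrase ``pointwise incompatibility along orbits of the Reeb flow'' leaves the decisive mechanism unspecified. The precise input the paper extracts from \cite{mueller:pec11} is this: for any $x \in M$ and any constant $k$ there is a compactly supported contact diffeomorphism $\phi$ with $\phi(x) = x$ and $D_\alpha(\phi)(x) = k$, independently of the contact form $\alpha$. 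At the fixed point $x$ any coboundary $f - f\circ\phi$ (for $f$ merely a function, no regularity required) evaluates to $f(x) - f(x) = 0$, while $\overline D_\alpha(\phi)(x) = D_\alpha(\phi)(x) = k \ne 0$; that is the immediate contradiction. Your argument is therefore not wrong in spirit, but it defers the key pointwise obstruction to an unverified description of the local construction, where the paper quotes the fixed-point statement directly and the claim becomes a one-line evaluation. (Also, the obstruction is at a fixed point of $\phi$ rather than along Reeb orbits; the Reeb flow enters only in how \cite{mueller:pec11} builds $\phi$, not in stating the obstruction.)
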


\begin{proof}
On every contact manifold $(M,\xi)$, there exists a contact diffeomorphism $\phi$ that does not preserve any contact form $\alpha$ with $\ker \alpha = \xi$ \cite{ms:pec12}.
Since $\Diff (M,\xi) \subset \Aut (M,\xi)$, the diffeomorphism $\phi$ belongs to $\Aut (M,\xi)$.
If $\phi \in \Aut (M,\alpha)$ for some contact form $\alpha$, then $\phi \in \Diff (M,\alpha)$ by Corollary~\ref{cor:contact-rigidity}, a contradiction.
That proves equation~(\ref{eqn:properly-essential}).
Suppose $[\overline{D}_\alpha] = 0$, then there exists a \emph{continuous} function $f$ such that $\overline{D}_\alpha (\phi) = f - f \circ \phi$ for all $\phi$.
But it is shown in \cite{ms:pec12} that for any $(M,\xi)$, $x \in M$, and arbitrary constant $k$, there exist neighborhoods $U \subset V$ of $x$, and a contact diffeomorphism $\phi$, compactly supported in $V$, with $D_\alpha (\phi) = k$ on $U$.
In fact, $x$ is a fixed point of $\phi$, and $D_\alpha (\phi) (x) = k$ for \emph{every} contact form $\alpha$.
Since $\overline{D}_\alpha (\phi) = D_\alpha (\phi)$, the final claim follows.
\end{proof}

Similarly, one shows that
	\[ \bigcup_\alpha \Homeo (M,\alpha) \subsetneq \Homeo (M,\xi). \]
The cohomological equation that establishes proper essentiality of the conformal group $\Diff (M,\xi)$ is the following.
Suppose $\phi \in \Diff (M,\xi)$, and $\alpha$ is a contact form with $\ker \alpha = \xi$.
Then $\phi^* \alpha = e^h \alpha$ for a smooth function $h$ on $M$.
Any other contact form on $(M,\xi)$ has the form $\alpha' = e^f \alpha$ for some smooth function $f$ on $M$, and the diffeomorphism $\phi$ preserves $\alpha'$ if and only if
\begin{align} \label{eqn:cohomological}
	h = f - f \circ \phi.
\end{align}
In other words, $\phi$ preserves some contact form on $(M,\xi)$, if and only if there exists a smooth solution to the \emph{cohomological equation}~(\ref{eqn:cohomological}) \cite{ms:pec12}.
As it turns out, it is often easier to find \emph{continuous} solutions to this cohomological equation, or obstructions to the existence of continuous solutions of equation~(\ref{eqn:cohomological}).
The following lemma complements the results in \cite{ms:pec12}.

\begin{lem}
Suppose $\phi \in \Diff (M,\xi)$ with $\phi^* \alpha = e^h \alpha$, and $f \in C^0 (M)$ is a continuous solution to the cohomological equation~(\ref{eqn:cohomological}).
Then for every $\epsilon > 0$, there exists a contact form $\alpha_\epsilon$ on $(M,\xi)$, such that $\phi^* \alpha_\epsilon = e^{h_\epsilon} \alpha$, and $| h_\epsilon | < \epsilon$.
\end{lem}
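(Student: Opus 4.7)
The plan is to obtain $\alpha_\epsilon$ by conformally rescaling $\alpha$ by a smooth approximation of the continuous solution $f$. First I would invoke the elementary fact that continuous functions on a smooth manifold can be approximated uniformly by smooth ones (via convolution against a mollifier in local coordinate charts, glued with a partition of unity): choose $f_\epsilon \in C^\infty (M)$ with $|f - f_\epsilon| < \epsilon / 2$.

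Next, set $\alpha_\epsilon := e^{f_\epsilon} \alpha$, which is automatically a smooth contact form on $(M,\xi)$ with $\ker \alpha_\epsilon = \xi$. Applying the first identity in Lemma~\ref{lem:conformal-calculus} to compute $\phi^* \alpha_\epsilon$ yields $\phi^* \alpha_\epsilon = e^{f_\epsilon \circ \phi} \cdot \phi^* \alpha = e^{f_\epsilon \circ \phi + h} \alpha = e^{f_\epsilon \circ \phi + h - f_\epsilon} \alpha_\epsilon$, so that the conformal factor of $\phi$ with respect to $\alpha_\epsilon$ is $h_\epsilon := f_\epsilon \circ \phi + h - f_\epsilon$. (I read the statement as $\phi^* \alpha_\epsilon = e^{h_\epsilon} \alpha_\epsilon$, in keeping with the convention used throughout the paper for conformal factors of contact diffeomorphisms.)

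The final step is to plug in the cohomological equation. Substituting $h = f - f \circ \phi$ into the expression for $h_\epsilon$ produces the telescoping identity $h_\epsilon = (f_\epsilon - f) \circ \phi - (f_\epsilon - f)$, so by the triangle inequality $|h_\epsilon| \le 2 \, |f_\epsilon - f| < \epsilon$, as required. There is no serious obstacle in this argument: once one recognizes equation~(\ref{eqn:cohomological}) as expressing $h$ as a coboundary under the operator $g \mapsto g - g \circ \phi$, smoothly approximating $f$ turns this exact continuous coboundary into an approximate smooth one, which is exactly what makes $\phi$ nearly preserve a smooth contact form in the conformal class of $\alpha$.
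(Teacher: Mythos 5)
Your proof is correct and follows essentially the same route as the paper: smoothly approximate $f$, rescale $\alpha$ to $\alpha_\epsilon = e^{f_\epsilon}\alpha$, and use the cohomological equation to telescope the resulting conformal factor into $(f_\epsilon - f)\circ\phi - (f_\epsilon - f)$, which is uniformly small. You also correctly diagnose the typographical slip in the statement (it should read $\phi^*\alpha_\epsilon = e^{h_\epsilon}\alpha_\epsilon$), in agreement with the paper's own proof, which identifies $h_\epsilon$ as the conformal factor of $\phi$ with respect to $\alpha_\epsilon$.
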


In other words, if there exists a smooth solution to the cohomological equation~(\ref{eqn:cohomological}), then for an appropriate choice of contact form on $(M,\xi)$, the conformal factor of $\phi$ vanishes.
If the solution to equation~(\ref{eqn:cohomological}) is merely continuous, then by choosing the contact form appropriately, the conformal factor of $\phi$ can be made arbitrarily small in the maximum norm.

\begin{proof}
Choose a sequence of smooth functions $f_i$ that converges uniformly to the continuous function $f$, and write $\alpha_i = e^{f_i} \alpha$.
The conformal factor of $\phi$ with respect to $\alpha_i$ equals $h + (f_i \circ \phi - f_i)$, and this sequence of functions converges uniformly to $h + (f \circ \phi - f) = 0$ as $i \to \infty$.
\end{proof}

\section{Topological conformal factors and topological Reeb flows} \label{sec:smooth-conformal-factors}
Recall that a necessary condition for the existence of a contact diffeomorphism $\psi$ such that $\psi^* \alpha = e^h \alpha$ is that the Reeb flows of $\alpha$ and $e^h \alpha$ are (smoothly) conjugate.
Thus on every closed contact manifold $(M,\xi)$, with any contact form $\alpha$, there exists a smooth function $h$ with the property that $\alpha$ and $e^h \alpha$ are not diffeomorphic \cite{ms:pec12}.
Denote by $\{ \phi_R^t \}$ the Reeb flow of the Reeb vector field $R$ corresponding to the contact form $\alpha$.
The following lemma is a special case of Theorem~\ref{thm:transformation-law}.

\begin{lem} \label{lem:conjugation}
If $\phi \in \Aut (M,\xi)$ is a topological automorphism with topological conformal factor $h$, then the isotopy $\{ \phi^{-1} \circ \phi_R^t \circ \phi \}$ is a topological contact isotopy with topological contact Hamiltonian $e^{- h}$ (and topological conformal factor $h - h \circ \phi^{-1} \circ \phi_R^t \circ \phi$).
\end{lem}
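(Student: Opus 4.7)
The plan is to apply Theorem~\ref{thm:transformation-law} directly, with the Reeb flow playing the role of the topological contact dynamical system, and $\phi$ playing the role of the conjugating automorphism. The only preparation required is the identification of the contact Hamiltonian and conformal factor of the Reeb flow itself.

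First I would observe that the smooth Reeb isotopy $\{\phi_R^t\}$ is generated, via equation~(\ref{eqn:contact-ham}), by the constant contact Hamiltonian $H \equiv 1$: indeed, $\iota(R)\alpha = 1$ by definition, and since $dH = 0$ and $R.H = 0$, the equation $\iota(R)d\alpha = (R.H)\alpha - dH = 0$ holds because $\iota(R)d\alpha = 0$ by definition of the Reeb vector field. Moreover, since the Reeb flow is strictly contact, $(\phi_R^t)^*\alpha = \alpha$, so its conformal factor vanishes identically. Consequently $(\{\phi_R^t\}, 1, 0)$ is a smooth, and hence a topological, contact dynamical system.

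Next I would apply Theorem~\ref{thm:transformation-law} with $(\Phi_H, H, h) = (\{\phi_R^t\}, 1, 0)$ and with conjugating automorphism $\phi \in \Aut(M,\xi)$ whose topological conformal factor is (in the notation of that theorem) $g = h$, where $h$ now denotes the conformal factor of $\phi$ as in the lemma. Substitution into (\ref{eqn:transformation-law}) yields that
\[
\bigl(\phi^{-1} \circ \{\phi_R^t\} \circ \phi,\ e^{-h}(1 \circ \phi),\ 0 \circ \phi + h - h \circ \phi^{-1} \circ \{\phi_R^t\} \circ \phi\bigr)
\]
is a topological contact dynamical system. Simplifying, the topological contact Hamiltonian is the time-independent function $e^{-h}$, and the topological conformal factor of the conjugated isotopy is the continuous function $h - h \circ \phi^{-1} \circ \phi_R^t \circ \phi$ on $[0,1] \times M$, precisely as claimed.

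There is no real obstacle to overcome: the content of the lemma is just the specialization of the transformation law to the Reeb flow. The only potential subtlety is keeping track of the role of time-dependence (the Reeb Hamiltonian and conformal factor are autonomous, while the resulting conformal factor is genuinely time-dependent through the Reeb flow), but this is purely notational. The proof is essentially one line: apply Theorem~\ref{thm:transformation-law} to $(\{\phi_R^t\}, 1, 0)$ and to $\phi$ with topological conformal factor $h$.
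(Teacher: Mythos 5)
Your proposal is correct and takes exactly the paper's route: the paper explicitly notes that this lemma ``is a special case of Theorem~\ref{thm:transformation-law}'' without further elaboration, and you have supplied precisely the missing details --- identifying $(\{\phi_R^t\}, 1, 0)$ as a smooth (hence topological) contact dynamical system via equation~(\ref{eqn:contact-ham}) and the defining properties of the Reeb vector field, then substituting into formula~(\ref{eqn:transformation-law}) with the conjugating automorphism $\phi$ and its topological conformal factor. The notational collision between the $h$ of the lemma and the $h$ of the transformation law is handled correctly.
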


A partial answer to Question~\ref{que:smooth-conformal-factor} is the following proposition.

\begin{pro} \label{pro:reeb-flows}
If the topological conformal factor $h$ of a topological automorphism $\phi$ is smooth, then the Reeb flows of the contact forms $\alpha$ and $\alpha' = e^h \alpha$ are conjugated by the homeomorphism $\phi$.
\end{pro}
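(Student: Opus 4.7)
The plan is to identify both the smooth Reeb flow of $\alpha' = e^h\alpha$ and the conjugated flow $\{\phi^{-1}\circ\phi_R^t\circ\phi\}$ as topological contact isotopies with respect to the original contact form $\alpha$, and then compare their contact Hamiltonians via Theorem~\ref{thm:uniqueness}.

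First, I would invoke Lemma~\ref{lem:conjugation} directly: since $\phi \in \Aut(M,\xi)$ has topological conformal factor $h$, the isotopy $\Psi = \{\phi^{-1}\circ\phi_R^t\circ\phi\}$ is a topological contact isotopy whose topological contact Hamiltonian, computed with respect to $\alpha$, is the continuous (here even smooth) function $e^{-h}$. Note that the Reeb flow $\{\phi_R^t\}$ is generated by the autonomous contact Hamiltonian $H\equiv 1$ with respect to $\alpha$, so the transformation law in Lemma~\ref{lem:conjugation} yields exactly $e^{-g}(H\circ\phi) = e^{-h}$.

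Next, I would compute the contact Hamiltonian with respect to $\alpha$ of the smooth Reeb flow $\{\phi_{R'}^t\}$ of $\alpha'$. Since $h$ is smooth, $\alpha'$ is a bona fide contact form defining $\xi$, and its Reeb vector field $R'$ is characterized by $\iota(R')\alpha' = 1$ and $\iota(R')d\alpha' = 0$. The first identity together with $\alpha' = e^h\alpha$ gives $\iota(R')\alpha = e^{-h}$, so the contact Hamiltonian of $\{\phi_{R'}^t\}$ with respect to the contact form $\alpha$ is precisely $e^{-h}$. In particular, $\{\phi_{R'}^t\}$ is a smooth, hence topological, contact isotopy in $\TC(M,\alpha)$ with the same topological contact Hamiltonian as $\Psi$.

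The final step is an application of the uniqueness theorem: by Theorem~\ref{thm:uniqueness} the topological contact Hamiltonian $e^{-h}$ determines a unique topological contact isotopy, so $\phi^{-1}\circ\phi_R^t\circ\phi = \phi_{R'}^t$ for all $t$. Rewriting this as $\phi_R^t\circ\phi = \phi\circ\phi_{R'}^t$ gives the desired topological conjugacy of the two Reeb flows by $\phi$. There is no real obstacle here; the only point deserving care is the observation that smoothness of $h$ is essential, both to make sense of the smooth Reeb flow of $\alpha'$ and to place $e^{-h}$ unambiguously in the class of topological (indeed smooth) contact Hamiltonians to which Theorem~\ref{thm:uniqueness} applies.
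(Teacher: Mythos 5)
Your proof is correct and follows essentially the same route as the paper's Proof~1: invoke Lemma~\ref{lem:conjugation} to identify $\{\phi^{-1}\circ\phi_R^t\circ\phi\}$ as the topological contact isotopy with Hamiltonian $e^{-h}$, observe that $\{\phi_{R'}^t\}$ has the same smooth contact Hamiltonian $\alpha(R')=e^{-h}$ with respect to $\alpha$, and conclude by the uniqueness theorem. Your writeup is merely a bit more explicit about the intermediate citations.
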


Recall however that two topologically conjugate smooth (strictly contact) vector fields are not necessarily $C^1$-conjugate \cite{ms:hvf12}.

\begin{proof}[Proof 1]
Let $R'$ be the Reeb vector field of $\alpha'$, and $\{ \phi_{R'}^t \}$ be its Reeb flow.
The smooth vector field $R'$ is contact with respect to the contact structure $\xi$, and its smooth contact Hamiltonian with respect to the contact form $\alpha$ is the function $\alpha (R') = e^{- h} \alpha' (R') = e^{- h}$.
By uniqueness of the topological contact isotopy associated to a given topological contact Hamiltonian (in this case, the function $H = e^{- h}$), the isotopies $\{ \phi^{-1} \circ \phi_R^t \circ \phi \}$ and $\{ \phi_{R'}^t \}$ coincide.
\end{proof}

\begin{lem}
If $\phi \in \Aut (M,\xi)$, and its topological conformal factor with respect to a contact form $\alpha$ vanishes, then $\phi$ commutes with the Reeb flow $\{ \phi_R^t \}$ of $\alpha$ at each time $t$.
\end{lem}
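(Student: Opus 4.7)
The plan is to reduce the statement to a direct application of Lemma~\ref{lem:conjugation} combined with the uniqueness of the topological contact isotopy associated to a given topological contact Hamiltonian (Theorem~\ref{thm:uniqueness}). The key observation is that the Reeb flow $\{\phi_R^t\}$ is itself a smooth contact isotopy, and its contact Hamiltonian with respect to $\alpha$ is the constant function $H \equiv 1$, since by definition $\iota(R)\alpha = 1$. Moreover, its conformal factor is identically zero, because $\mathcal{L}_R \alpha = 0$.

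First, I would apply Lemma~\ref{lem:conjugation} to $\phi$ with topological conformal factor $h = 0$. This yields that the continuous isotopy $\{\phi^{-1} \circ \phi_R^t \circ \phi\}$ is a topological contact isotopy, whose topological contact Hamiltonian equals $e^{-h} = e^{0} = 1$, and whose topological conformal factor is $h - h \circ \phi^{-1} \circ \phi_R^t \circ \phi = 0$. In other words, $(\phi^{-1} \circ \Phi_R \circ \phi, 1, 0)$ is a topological contact dynamical system with respect to $\alpha$.

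Next, I would observe that $(\Phi_R, 1, 0)$ is itself a (smooth, and hence topological) contact dynamical system with respect to $\alpha$. Hence both triples share the same topological contact Hamiltonian $H \equiv 1$. By the uniqueness of the topological contact isotopy in Theorem~\ref{thm:uniqueness}, it follows that $\phi^{-1} \circ \phi_R^t \circ \phi = \phi_R^t$ for all $t \in [0,1]$, which gives the commutation $\phi \circ \phi_R^t = \phi_R^t \circ \phi$. The extension from $t \in [0,1]$ to all $t \in \mathbb{R}$ is routine by reparameterization (Lemma~\ref{lem:rep}) applied to rescaled Reeb isotopies, or simply by iterating the group law on the time-one map and its inverse.

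No step here looks like a genuine obstacle: once Lemma~\ref{lem:conjugation} is in hand, the entire argument is a one-line invocation of uniqueness, since the distinguished feature of the case $h = 0$ is precisely that $e^{-h}$ coincides with the Reeb Hamiltonian. The only point requiring a moment of care is verifying that the topological conformal factor of the conjugated isotopy vanishes identically (and not just that its Hamiltonian equals $1$), but this is immediate from the formula $h - h \circ \phi^{-1} \circ \phi_R^t \circ \phi$ in Lemma~\ref{lem:conjugation} specialized to $h = 0$.
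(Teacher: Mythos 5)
Your proof is correct and follows essentially the same route as the paper: both invoke Lemma~\ref{lem:conjugation} to identify the topological contact Hamiltonian of the conjugated Reeb isotopy as $e^{-h}=1$ when $h=0$, and then conclude $\phi^{-1}\circ\phi_R^t\circ\phi = \phi_R^t$ by the uniqueness of the topological contact isotopy.
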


This result is proved in \cite{banyaga:ugh11} under the hypothesis that $\phi$ is the time-one map of a topological strictly contact isotopy.

\begin{proof}
The homeomorphism $\phi$ commutes with the time-$t$ map $\phi_R^t$, if and only if $\phi_R^t = \phi^{-1} \circ \phi_R^t \circ \phi$.
If the topological conformal factor of $\phi$ vanishes, both topological contact isotopies correspond to the constant topological contact Hamiltonian $H = 1$, and by uniqueness of the topological contact isotopy, they must coincide.
\end{proof}

\begin{lem}
The topological contact isotopy $\{ \phi^{-1} \circ \phi_R^t \circ \phi \}$ depends only on the topological conformal factor of the topological automorphism $\phi$.
That is, if $\psi \in \Aut (M,\xi)$ is another topological automorphism with the same topological conformal factor as $\phi$, then
	\[ \phi^{-1} \circ \phi_R^t \circ \phi = \psi^{-1} \circ \phi_R^t \circ \psi. \]
\end{lem}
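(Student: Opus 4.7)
The plan is to invoke Lemma~\ref{lem:conjugation} (i.e.\ the transformation law, Theorem~\ref{thm:transformation-law}) applied separately to $\phi$ and to $\psi$, and then conclude via the uniqueness theorem for topological contact isotopies.

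First I would note that the Reeb flow $\{\phi_R^t\}$ is the smooth contact isotopy generated, with respect to the contact form $\alpha$, by the constant contact Hamiltonian $H \equiv 1$ with identically vanishing conformal factor. Applying Lemma~\ref{lem:conjugation} to the topological automorphism $\phi \in \Aut(M,\xi)$ with topological conformal factor $h$, the isotopy $\{\phi^{-1} \circ \phi_R^t \circ \phi\}$ is a topological contact isotopy whose topological contact Hamiltonian is $e^{-h} \cdot (1 \circ \phi) = e^{-h}$. Applying Lemma~\ref{lem:conjugation} in exactly the same way to $\psi$, whose topological conformal factor is also $h$ by hypothesis, yields that $\{\psi^{-1} \circ \phi_R^t \circ \psi\}$ is a topological contact isotopy with the same topological contact Hamiltonian $e^{-h}$.

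Since the two topological contact isotopies $\{\phi^{-1} \circ \phi_R^t \circ \phi\}$ and $\{\psi^{-1} \circ \phi_R^t \circ \psi\}$ share the same topological contact Hamiltonian, Theorem~\ref{thm:uniqueness} (uniqueness of the topological contact isotopy determined by a topological contact Hamiltonian) implies that the two isotopies coincide as isotopies of homeomorphisms of $M$. Evaluating at each time $t$ gives the desired identity $\phi^{-1} \circ \phi_R^t \circ \phi = \psi^{-1} \circ \phi_R^t \circ \psi$.

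There is no substantive obstacle: the argument is a direct combination of the transformation law with the uniqueness theorem, and the only thing to be careful about is verifying that the common topological contact Hamiltonian produced by the transformation law really depends only on $h$ (and not on $\phi$ or $\psi$ separately), which is clear from the formula $e^{-g}(H \circ \phi)$ of Theorem~\ref{thm:transformation-law} when $H \equiv 1$.
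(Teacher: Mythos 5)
Your argument is correct and is precisely the ``alternative'' proof the paper itself records for this lemma: both conjugated isotopies have topological contact Hamiltonian $e^{-h}$ by Lemma~\ref{lem:conjugation}, so they coincide by Theorem~\ref{thm:uniqueness}. The paper's primary proof instead writes $\phi^{-1}\circ\phi_R^t\circ\phi = \psi^{-1}\circ((\phi\circ\psi^{-1})^{-1}\circ\phi_R^t\circ(\phi\circ\psi^{-1}))\circ\psi$ and uses that $\phi\circ\psi^{-1}$ has vanishing conformal factor and hence commutes with the Reeb flow, but the two routes are equivalent and your version is complete as written.
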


\begin{proof}
Since by Proposition~\ref{pro:gp-str-auto} $\phi \circ \psi^{-1}$ has topological conformal factor zero,
	\[ \phi^{-1} \circ \phi_R^t \circ \phi = \psi^{-1} \circ ((\phi \circ \psi^{-1})^{-1} \circ \phi_R^t \circ (\phi \circ \psi^{-1})) \circ \psi = \psi^{-1} \circ \phi_R^t \circ \psi \]
by the previous lemma.
Alternatively, this follows directly from Lemma~\ref{lem:conjugation}, since both isotopies correspond to the same topological contact Hamiltonian.
\end{proof}

Thus we may define the topological Reeb flow of $e^h \alpha$ for any topological conformal factor $h$.

\begin{dfn}[Topological Reeb flow]
Given a topological conformal factor $h$, then the \emph{topological Reeb flow} of the continuous one-form $e^h \alpha$ is the topological contact isotopy $\{ \phi^{-1} \circ \phi_R^t \circ \phi \}$, where $R$ is again the Reeb vector field of the contact form $\alpha$, and $\phi$ is any topological automorphism of $\xi$ with topological conformal factor $h$.
\end{dfn}

By the preceding lemma, this definition does not depend on the particular choice of topological automorphism $\phi$ with topological conformal factor $h$.
Moreover, we have seen in Proposition~\ref{pro:reeb-flows} that if $h$ is smooth, then this coincides with the usual definition of the Reeb flow.
More generally, the definition does not depend on the choice of contact form either in the following sense.

\begin{lem}
Suppose $\phi \in \Aut (M,\xi)$ is a topological automorphism with topological conformal factor $h$ with respect to $\alpha$, $\alpha' = e^f \alpha$ is another contact form on $(M,\xi)$, and $\psi \in \Aut (M,\xi)$ is a topological automorphism with topological conformal factor $h - f$ with respect to $\alpha'$.
Then $\phi^{-1} \circ \phi_R^t \circ \phi = \psi^{-1} \circ \phi_{R'}^t \circ \psi$.
\end{lem}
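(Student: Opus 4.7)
The plan is to exhibit both isotopies as the topological contact isotopy associated to the same topological contact Hamiltonian (with respect to a common choice of contact form), and then invoke Theorem~\ref{thm:uniqueness}.

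First, I would compute the topological contact Hamiltonian of $\{\phi^{-1} \circ \phi_R^t \circ \phi\}$ with respect to $\alpha$. The Reeb flow $\{\phi_R^t\}$ has smooth contact Hamiltonian $H \equiv 1$ and trivial conformal factor with respect to $\alpha$. Since $\phi \in \Aut(M,\xi)$ has topological conformal factor $h$ with respect to $\alpha$, the transformation law Theorem~\ref{thm:transformation-law} (cf.\ Lemma~\ref{lem:conjugation}) gives that $\{\phi^{-1} \circ \phi_R^t \circ \phi\}$ is a topological contact isotopy with topological contact Hamiltonian
\[ e^{-h}(1 \circ \phi) = e^{-h} \quad \text{with respect to } \alpha. \]

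Second, I would compute the topological contact Hamiltonian of $\{\psi^{-1} \circ \phi_{R'}^t \circ \psi\}$ with respect to $\alpha' = e^f \alpha$. The Reeb flow $\{\phi_{R'}^t\}$ of $R'$ has smooth contact Hamiltonian $\alpha'(R') \equiv 1$ and trivial conformal factor with respect to $\alpha'$. Since $\psi$ has topological conformal factor $h - f$ with respect to $\alpha'$, the same application of the transformation law gives that $\{\psi^{-1} \circ \phi_{R'}^t \circ \psi\}$ is a topological contact isotopy with topological contact Hamiltonian
\[ e^{-(h-f)} = e^{f-h} \quad \text{with respect to } \alpha'. \]

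Third, I would translate the first Hamiltonian into the contact form $\alpha'$ using Lemma~\ref{lem:alpha-indep}: the topological contact dynamical system $(\phi^{-1} \circ \Phi_R \circ \phi, e^{-h}, \, h \circ \phi - h \circ \phi^{-1} \circ \Phi_R \circ \phi)$ with respect to $\alpha$ is transformed, under the change of contact form $\alpha \mapsto e^f \alpha$, to a topological contact dynamical system whose topological contact Hamiltonian with respect to $\alpha'$ is $e^f \cdot e^{-h} = e^{f-h}$. Thus both isotopies $\{\phi^{-1} \circ \phi_R^t \circ \phi\}$ and $\{\psi^{-1} \circ \phi_{R'}^t \circ \psi\}$ have the same topological contact Hamiltonian $e^{f-h}$ with respect to the single contact form $\alpha'$, and Theorem~\ref{thm:uniqueness} forces the isotopies themselves to coincide.

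The proof is really a bookkeeping exercise; the only genuinely substantial input is the uniqueness theorem, which was the main point of section~\ref{sec:uniqueness-theorems}. The one point to watch is that when invoking Lemma~\ref{lem:alpha-indep} one must check that the formula for the transformed conformal factor matches under both descriptions, but this is automatic once the isotopy and contact form are fixed, since the conformal factor is then determined pointwise by the defining identity $(\phi_t)^* \alpha' = e^{(\,\cdot\,)} \alpha'$ and no independent verification is needed.
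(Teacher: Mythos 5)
Your proof is correct, but it takes a genuinely different route from the paper's. The paper's proof is shorter and more algebraic: it first shows (using Proposition~\ref{pro:gp-str-auto} and Proposition~\ref{pro:indep-auto-gp}) that $\phi \circ \psi^{-1}$ has topological conformal factor $f$ with respect to $\alpha$, then invokes Proposition~\ref{pro:reeb-flows} directly to obtain $(\phi\circ\psi^{-1})^{-1}\circ\phi_R^t\circ(\phi\circ\psi^{-1}) = \phi_{R'}^t$, and finally conjugates both sides by $\psi$. You instead bypass Proposition~\ref{pro:reeb-flows} and go straight to the uniqueness theorem by computing the topological contact Hamiltonians of both conjugated isotopies with respect to the single contact form $\alpha'$. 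The two arguments are logically close --- Proposition~\ref{pro:reeb-flows} itself is proved by exactly the bookkeeping you perform (it is the special case $\psi = \id$), so the paper's argument is really your argument packaged behind that proposition plus a group-cancellation trick. Your version is somewhat longer and requires carefully tracking the change of contact form via Lemma~\ref{lem:alpha-indep}, but it is self-contained in the sense that it exposes exactly which Hamiltonian each isotopy is generated by, whereas the paper's version is more economical given that Proposition~\ref{pro:reeb-flows} is already available. Both approaches correctly reduce to Theorem~\ref{thm:uniqueness} as the one substantive input.
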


\begin{proof}
The topological automorphism $\phi \circ \psi^{-1}$ has topological conformal factor $f$ with respect to the contact form $\alpha$.
Thus by Proposition~\ref{pro:reeb-flows},
	\[ \phi^{-1} \circ \phi_R^t \circ \phi = \psi^{-1} \circ ((\phi \circ \psi^{-1})^{-1} \circ \phi_R^t \circ (\phi \circ \psi^{-1})) \circ \psi = \psi^{-1} \circ \phi_{R'}^t \circ \psi. \qedhere \]
\end{proof}
An alternate proof of Proposition~\ref{pro:reeb-flows} follows from a parametrized version of a theorem due to E.~Opshtein \cite[Theorem~1]{opshtein:crc09} that we now explain.
Opshtein showed that if $S$ and $S'$ are smooth hypersurfaces of symplectic manifolds $(W,\omega)$ and $(W',\omega')$, respectively, then a symplectic homeomorphism $W' \to W$ which sends $S'$ to $S$ interchanges the characteristic foliations of $S'$ and $S$.
Proof 1 for Proposition~\ref{pro:reeb-flows} suggests there exists a proof of Opshtein's Theorem for \emph{parametrized} characteristic foliations using topological Hamiltonian dynamics.
This is indeed the case under some additional hypotheses.

Let $S$ be a compact and orientable hypersurface, and choose a compactly supported smooth function $H \colon W \to \R$, such that $1$ is a regular value of $H$, and $S \subseteq H^{-1} (1)$ is a component of the regular level set of $H$.
Such a function always exists.
The leaves of the characteristic foliation of $S$ are the unparametrized integral curves of the Hamiltonian flow generated by the function $H$, independent of the particular choice of function $H$ with the above properties.
Since $S$ is compact and regular, there exists an open and bounded neighborhood $U$ of $S$, which is filled with a family of compact and regular hypersurfaces $S_\lambda \subseteq H^{-1} (\lambda)$ parametrized by the energy, where $\lambda$ belongs to an open interval $I$ around $1$.
Moreover, each $S_\lambda$ is diffeomorphic to $S$, which corresponds to the parameter value $\lambda = 1$, and this defines a diffeomorphism $S \times I \to U$.
In these coordinates on $U$, the function $H$ is given by $H (x,\lambda) = \lambda$.
See for example Chapter~4 of \cite{hofer:sih94} for details.
The family $S_\lambda$, $\lambda \in I$ is called a \emph{one-parameter family of hypersurfaces modeled on $S$}.

\begin{thm} \label{thm:parametrized-opshtein}
Let $S$ and $S'$ be compact and orientable smooth hypersurfaces of symplectic manifolds $(W,\omega)$ and $(W',\omega')$, $S_\lambda$ and $S'_\lambda$ be one-parameter families of hypersurfaces modeled on $S$ and $S'$, and $H$ and $H'$ be smooth functions defining parametrizations of the characteristic foliations of $S_\lambda$ and $S'_\lambda$, respectively.
Then a symplectic homeomorphism $\phi \colon W' \to W$ that sends each $S'_\lambda$ to the corresponding $S_\lambda$, interchanges the parametrized characteristic foliations of $S_\lambda$ and $S'_\lambda$ for all $\lambda$.
\end{thm}

\begin{proof}
By the transformation law from \cite{mueller:ghh07}, the homeomorphism $\phi$ conjugates the topological Hamiltonian isotopies corresponding to the topological Hamiltonian functions $H$ and $H \circ \phi$.
By hypothesis, $H (\phi (x,\lambda)) = \lambda = H' (x,\lambda)$ on $U'$.
After multiplying $H$ with a smooth cut-off function $\rho (\lambda)$ on $I$ that equals $1$ on an open subinterval $J \subset I$, we may assume $H \circ \phi = H'$ on all of $W'$.
The uniqueness of the topological Hamiltonian isotopy from \cite{mueller:ghh07} implies the topological Hamiltonian isotopies corresponding to $H'$ and $H \circ \phi$ coincide.
Thus $\phi$ interchanges the parametrized characteristic foliations of $S'_\lambda$ and $S_\lambda$ for all $\lambda \in J$.
Since $J$ is arbitrary, this holds for all $\lambda \in I$.
\end{proof}

In fact, for the conclusion that the characteristic foliations of $S'$ and $S$ only are interchanged, instead of assuming $\phi$ sends each $S'_\lambda$ to $S_\lambda$, it suffices that the function $H \circ \phi$ is smooth in an open neighborhood of $S'$.

\begin{proof}[Proof 2 of Proposition~\ref{pro:reeb-flows}]
Denote by $(W,\omega)$ and $(W,\omega')$ the symplectizations of $(M,\alpha)$ and $(M,\alpha')$, respectively, and let $\phihat \colon (W,\omega) \to (W,\omega)$ be the lift of the topological automorphism $\phi$ to a symplectic homeomorphism $\phihat$ defined by $\phihat (x,\theta) = (\phi (x),\theta - h (x))$.
Since $h$ is smooth, the map $\phi_h \colon (W,\omega') \to (W,\omega)$ defined in Section~\ref{sec:symplectization} by $(x,\theta) \mapsto (x,\theta + h (x))$ is a symplectic diffeomorphism.
Then the composition $\phihat \circ \phi_h \colon (W,\omega') \to (W,\omega)$ is a symplectic homeomorphism, which sends each hypersurface of the form $M_\theta = M \times \{ \theta \} \subset M \times \R$ to itself. 
Let $\wH (x,\theta) = e^\theta$ be the lift of the constant contact Hamiltonian $H = 1$ generating the Reeb flows of $(M,\alpha)$ and $(M,\alpha')$.
By Theorem~\ref{thm:parametrized-opshtein}, the homeomorphism $\phihat \circ \phi_h$ interchanges the parametrized characteristic foliations of $M_0$ with respect to $\omega$ and $\omega'$.
Thus the homeomorphism $\phi \colon M \to M$ conjugates the Reeb flows of $\alpha$ and $\alpha'$.
\end{proof}

Opshtein's proof of his theorem uses the notion of a \emph{symplectic hammer}, which by definition is a symplectic isotopy of a symplectic manifold satisfying certain properties.
Since symplectic hammers are supported in Darboux balls, every smooth symplectic hammer is Hamiltonian, or a \emph{smooth Hamiltonian hammer}.
The correct generalization to continuous isotopies seems to be to topological Hamiltonian isotopies as defined in \cite{mueller:ghh07}, and satisfying the same properties as in Definition~1.1 in \cite{opshtein:crc09}.
We call this a \emph{topological Hamiltonian hammer}.
Moreover, a \emph{topological symplectic hammer} should be defined as the $C^0$-limit of smooth symplectic hammers.
Indeed, part of the proof of Opshtein's main theorem requires the approximation of a continuous symplectic hammer by a smooth symplectic hammer.
All the results in \cite{opshtein:crc09} hold with this notion of topological Hamiltonian or symplectic hammer replacing symplectic hammers.
If two points in the intersection $S \cap B$ of a symplectic hypersurface with a Darboux ball lie in the same characteristic, then there exists a smooth symplectic $\epsilon$-hammer between them for any small $\epsilon > 0$ \cite{opshtein:crc09}, and this hammer is of course a topological Hamiltonian (or symplectic) hammer.
Conversely, if for each small $\epsilon > 0$, a topological Hamiltonian (or symplectic) $\epsilon$-hammer between two given points in the intersection $S \cap B$ exists, they lie in the same characteristic.
The proof is verbatim the same as in \cite{opshtein:crc09}.

An ad hoc definition of a \emph{symplectic $C^0$-submanifold} is the image of a smooth symplectic submanifold by a symplectic homeomorphism.
By Opshtein's Theorem, one can define the \emph{topological characteristic foliation} of a $C^0$-symplectic hypersurface $S$ as the image of the characteristic foliation of any smooth symplectic hypersurface that is mapped to $S$ by a symplectic homeomorphism.
This is well-defined, and coincides with the usual definition of characteristic foliation if $S$ is smooth.
In fact, Opshtein defines the characteristic foliation in terms of symplectic hammers, which gives rise to an equivalent definition of topological characteristic foliations.
The unparametrized topological Reeb flow of $e^h \alpha$ then coincides with the topological characteristic foliation of the $C^0$-hypersurface $\phihat (M_0)$ of the symplectization $M \times \R$ of $(M,\alpha)$, provided $\phi$ is a topological automorphism of $(M,\xi)$ with topological conformal factor $h$ with respect to the contact form $\alpha$.

\section{The sequels: topological contact dynamics II and III} \label{sec:sequels}
\subsection{On the choice of contact metric}
Hofer \cite{hofer:tps90} originally defined the `length' of a Hamiltonian isotopy $\Phi_H$ of $\R^{2 n}$ with its standard symplectic structure, by the maximum oscillation over time of its unique compactly supported generating smooth Hamiltonian,
\begin{align}\label{eqn:hofer-inf}
	\| H \|_{\rm Hofer}^\infty = \max_{0 \le t \le 1} \left( \max_{x \in \R^{2 n}} H (t,x) - \min_{x \in \R^{2 n}} H (t,x) \right).
\end{align}
Polterovich subsequently adopted the $L^\oneinfty$-norm in equation~(\ref{eqn:hofer-length}), and showed that these two definitions descend to equal pseudo-norms on the group of Hamiltonian diffeomorphisms, i.e.\ if $\phi$ is a Hamiltonian diffeomorphism of a symplectic manifold $(W,\omega)$, then \cite{polterovich:ggs01}
	\[ \inf_{H \mapsto \phi} \| H \|_{\rm Hofer} = \inf_{H \mapsto \phi} \| H \|_{\rm Hofer}^\infty. \]
Non-degeneracy follows from the energy-capacity inequality~(\ref{eqn:energy-capacity-ineq}).
For smooth Hamiltonian isotopies $\Phi_H$ and $\Phi_F$, generated by normalized smooth time-dependent Hamiltonian functions $H$ and $F$, let
	\[ d^\infty_{\rm ham}(\Phi_H, \Phi_F) = \dbar_W (\Phi_H, \Phi_F) + \| \Hbar \# F \|^\infty_{\rm Hofer} = \dbar_W (\Phi_H, \Phi_F) + \| H - F \|^\infty_{\rm Hofer}, \]
and consider  the completion of the group of smooth Hamiltonian dynamical systems with respect to this stronger $L^\infty$-Hamiltonian metric.
Obviously, the groups of time-one maps satisfy $\Hameo^\infty (W,\omega) \subseteq \Hameo (W,\omega)$, since $d_{\rm ham}$ is controlled from above by $d_{\rm ham}^\infty$, but in fact the two groups are equal \cite{mueller:ghl08}.

Rather than with the contact metric $d_\alpha$, one may work with the stronger metric
	\[ d_\alpha^\infty(\Phi_H, \Phi_F) = \dbar_M (\Phi_H, \Phi_F) + | h - f | + \| H - F \|_\alpha^\infty, \]
where $\Phi_H$ and $\Phi_F$ are smooth contact isotopies, and 
\begin{align}\label{eqn:contact-length-inf}
	\| H \|_\alpha^\infty = \max_{0 \le t \le 1}  \left( \max_{x \in M} H (t,x) - \min_{x \in M} H (t,x) + \left| c_\alpha (H_t) \right| \right).
\end{align}
We call a triple $(\Phi, H, h)$ a \emph{continuous contact dynamical system} if it is the limit with respect to the metric $d_\alpha^\infty$ of a sequence $(\Phi_{H_i}, H_i, h_i)$ of smooth contact dynamical systems.
Restricting the contact metric $d_\alpha^\infty$ to the group of smooth strictly contact dynamical systems of $(M,\alpha)$ similarly defines \emph{continuous strictly contact dynamical systems}.
Strictly contact isotopies and their time-one maps were already studied in \cite{banyaga:ugh11}.
All of the results obtained in this paper for the group of topological contact dynamical systems also apply to continuous contact dynamical systems, with in many cases simpler proofs, since a limit contact Hamiltonian $H$ is now a continuous time-dependent function on $M$.

After adapting and streamlining the reparametrization techniques developed in \cite{mueller:ghl08, mueller:ghh08}, we obtain the main lemma of part II.
Every topological contact dynamical system is arbitrarily $d_\alpha$-close to a continuous contact dynamical system with the same time-one map, and in fact, the latter is smooth everywhere except possibly at time one.
In particular, $\Homeo^\infty (M,\xi) = \Homeo (M,\xi)$ and $\Homeo^\infty (M,\alpha) = \Homeo (M,\alpha)$ \cite{ms:tcd2}.
The second identity was obtained in \cite{banyaga:ugh11} in the special case that $\alpha$ is a regular contact form.

We also extend both the contact energy-capacity inequality and the Banyaga--Donato metric to the group of contact homeomorphisms and strictly contact homeomorphisms, respectively \cite{ms:tcd2}.

\subsection{On the contact Hamiltonian of a topological contact dynamical system}
For a topological Hamiltonian dynamical system of a symplectic manifold, the converse to the uniqueness of the isotopy was proved by Viterbo and Buhovsky--Seyfaddini, that is, if $(\Phi, H)$ and $(\Phi, F)$ are two topological Hamiltonian dynamical systems with equal topological contact isotopies, then the topological Hamiltonians $H$ and $F$ coincide.

\begin{lem} \label{lem:ham-equiv}
Let $(\Phi_i, H_i, h_i) \in \C (M,\alpha)$ be a sequence of smooth contact dynamical systems that converges with respect to the contact metric $d_\alpha$ to the topological contact dynamical system $(\Phi, H, h) \in \TC (M,\alpha)$.
Then the following are equivalent.
\begin{enumerate}
	\item [(i)] Suppose $(\Psi_i, F_i, f_i)$ is another sequence of smooth contact dynamical systems that converges with respect to the contact metric $d_\alpha$ to the topological contact dynamical system $(\Psi, F, f) \in \TC (M,\alpha)$.
	If $\Phi = \Psi$, then $H = F$, and $h = f$.
	\item [(ii)] If $\Phi$ is smooth, then $H$ and $h$ are smooth functions, $\Phi = \Phi_H$ is the smooth contact isotopy generated by the smooth contact Hamiltonian $H$, and $(\phi^t_H)^*\alpha = e^{h (t,\cdot)}\alpha$. 
	\item [(iii)] If $\Phi = \id$, then $H = 0$, and $h = 0$.
\end{enumerate}
\end{lem}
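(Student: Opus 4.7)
The plan is to mirror the proof of Lemma~\ref{lem:isotopy-equivalent} by establishing the cycle of implications (i) $\Rightarrow$ (ii) $\Rightarrow$ (iii) $\Rightarrow$ (i).

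For (i) $\Rightarrow$ (ii), suppose $\Phi = \{ \phi_t \}$ is smooth. Let $F$ denote the unique smooth contact Hamiltonian generating $\Phi$ via equation~(\ref{eqn:contact-ham}), and $f$ its smooth conformal factor defined by $\phi_t^* \alpha = e^{f(t,\cdot)} \alpha$. The constant sequence $(\Phi, F, f)$ is trivially a $d_\alpha$-Cauchy sequence of smooth contact dynamical systems whose limit is $(\Phi, F, f) \in \TC (M, \alpha)$. Applying hypothesis (i) to the given sequence and this constant one, both converging to topological contact dynamical systems with the same topological contact isotopy $\Phi$, yields $H = F$ and $h = f$. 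In particular $H$ and $h$ are smooth, and $(\Phi, H, h)$ coincides with the smooth contact dynamical system associated with $\Phi$. The implication (ii) $\Rightarrow$ (iii) is immediate, since the identity isotopy is smooth and is generated by the zero contact Hamiltonian with vanishing conformal factor.

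The substantive step is (iii) $\Rightarrow$ (i). Let $(\Psi_i, F_i, f_i)$ be a second sequence of smooth contact dynamical systems converging in $d_\alpha$ to $(\Psi, F, f) \in \TC (M, \alpha)$ with $\Psi = \Phi$. By Theorem~\ref{thm:topo-group}, the composition
	\[ (\Phi_{H_i}, H_i, h_i)^{-1} \circ (\Psi_i, F_i, f_i) = (\Phi_{H_i}^{-1} \circ \Psi_i, \Hbar_i \# F_i, \hbar_i \# f_i) \]
is $d_\alpha$-Cauchy and converges to $(\Phi^{-1} \circ \Psi, \Hbar \# F, \hbar \# f) = (\id, \Hbar \# F, \hbar \# f) \in \TC (M, \alpha)$. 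By hypothesis (iii), both $\Hbar \# F$ and $\hbar \# f$ vanish identically. The defining formula~(\ref{eqn:gp-str-ham}), combined with positivity of $e^{-h_t}$ and bijectivity of each $\phi_H^t$, forces $F = H$. Since $\Phi = \Psi$ gives $(\phi_H^t)^{-1} \circ \phi_F^t = \id$, formula~(\ref{eqn:gp-str-fctn}) collapses to $(\hbar \# f)_t = f_t - h_t$, whose vanishing yields $h = f$.

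The key enabler throughout is the continuity of the group operations on $\TC (M, \alpha)$ provided by Theorem~\ref{thm:topo-group}, which legitimizes passage to the limit in the cyclic step. Unlike Lemma~\ref{lem:isotopy-equivalent}, whose equivalent statements are established unconditionally in this article, the present lemma merely records a formal equivalence among three statements; actually verifying (iii) for topological contact dynamics is the contact analog of the Viterbo and Buhovsky-Seyfaddini uniqueness theorems in the Hamiltonian setting, and constitutes the genuine obstacle, to be addressed in the sequels.
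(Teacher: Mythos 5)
Your overall architecture coincides with the paper's: the cyclic implications (i)~$\Rightarrow$~(ii)~$\Rightarrow$~(iii)~$\Rightarrow$~(i), with the substantive step (iii)~$\Rightarrow$~(i) handled exactly as in the paper by composing the two sequences via Theorem~\ref{thm:topo-group}, passing to the limit $(\id, \Hbar\#F, \hbar\#f)$, and unwinding formulas (\ref{eqn:gp-str-ham})--(\ref{eqn:gp-str-fctn}). Your treatment of (iii)~$\Rightarrow$~(i) and (ii)~$\Rightarrow$~(iii) is fine.

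There is, however, a genuine gap in your proof of (i)~$\Rightarrow$~(ii). You write ``Let $F$ denote the unique smooth contact Hamiltonian generating $\Phi$ via equation~(\ref{eqn:contact-ham})'', but the hypothesis of (ii) is only that the limit isotopy $\Phi$ is \emph{smooth}, not that it is a smooth \emph{contact} isotopy. A general smooth isotopy of $M$ admits no contact Hamiltonian whatsoever, so the constant comparison sequence $(\Phi, F, f)$ you construct is not known to consist of smooth contact dynamical systems, and statement (i) cannot be applied to it. What closes this gap is precisely contact $C^0$-rigidity (Theorem~\ref{thm:contact-rigidity}): since each $\phi_t$ is the $C^0$-limit of the contact diffeomorphisms $\phi_i^t$ whose conformal factors $h_i^t$ converge uniformly, smoothness of $\phi_t$ forces $\phi_t^*\alpha = e^{h_t}\alpha$ with $h_t$ smooth, whence $\Phi = \Phi_G$ for a smooth contact Hamiltonian $G$ and the constant sequence $(\Phi_G, G, g)$ is legitimate. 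This is not a cosmetic omission --- rigidity is one of the main theorems of the paper and is exactly the input that distinguishes this lemma from its cousin Lemma~\ref{lem:isotopy-equivalent}, where the smoothness hypothesis is placed on $H$ rather than on $\Phi$ and no rigidity is needed. Once Theorem~\ref{thm:contact-rigidity} is invoked, the rest of your argument for (i)~$\Rightarrow$~(ii) goes through as written.
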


Without statement (ii), the analogous statement for Hamiltonian isotopies is well-known and first appeared in \cite{mueller:ghh07}.
A version including a statement similar to (ii) holds as well.

\begin{proof}
We prove that (i) implies (ii).
By contact rigidity (Theorem~\ref{thm:contact-rigidity}), $\Phi = \Phi_G$ for a smooth contact Hamiltonian $G$, and $(\phi_G^t)^* \alpha = e^{g (t,\cdot)} \alpha$.
Consider the constant sequence of smooth contact dynamical systems $(\Psi_i, F_i, f_i) = (\Phi_G, G, g)$.
Statement (i) implies $H = G$ and $h = g$, and the conclusion of statement (ii) holds.
That (ii) implies (iii) is again obvious.
Finally, we prove (iii) implies (i).
Indeed, the smooth sequence $(\Phi_i^{-1} \circ \Psi_i, \Hbar_i \# F_i, \holine_i \#f_i)$ by assumption converges in the contact metric to the topological contact dynamical system $(\id, \Hbar \# F, \holine \# f)$, and the conclusion of statement (iii) yields $\Hbar \# F = 0$ and $\holine \# f = 0$.
By equations~(\ref{eqn:gp-str-ham}) and (\ref{eqn:gp-str-fctn}), $H = F$ and $h = f$.
\end{proof}

We have already seen in Corollary~\ref{cor:unique-topo-conformal-factor-iso} that if $(\id, H, h)$ is a topological contact dynamical system, then $h = 0$.
The scheme of proof in \cite{buhovsky:ugh11} can then be adapted to the contact case.
The details are carried out in the sequel \cite{ms:tcd3}.

\nocite{banyaga:egs12, eliashberg:egc09, viterbo:eug06} 

\bibliography{contact} 
\bibliographystyle{amsalpha}

\end{document}